\newtheorem{theorem}{Théorème}[section]
\newtheorem{prop}[theorem]{Proposition}
\newtheorem{lemma}[theorem]{Lemme}
\newtheorem{cor}[theorem]{Corollaire}
\newtheorem{rem}[theorem]{Remarque}
\newcommand{\R}{\mathbb{R}}
\newcommand{\C}{\mathbb{C}}
\newcommand{\Q}{\mathbb{Q}}
\newcommand{\Z}{\mathbb{Z}}
\newcommand{\cC}{{\mathcal{C}}}
\newcommand{\cE}{{\mathcal{E}}}
\newcommand{\cD}{{\mathcal{D}}}
\newcommand{\go}{\mathfrak{o}}
\newcommand{\cO}{\mathcal{O}}
\newcommand{\cR}{\mathcal{R}}
\newcommand{\cH}{\mathcal{H}}
\newcommand{\cT}{\mathcal{T}}
\newcommand{\cW}{\mathcal{W}}
\newcommand{\gm}{\mathfrak{m}}
\newcommand{\gn}{\mathfrak{n}}
\newcommand{\gp}{\mathfrak{p}}
\newcommand{\gq}{\mathfrak{q}}
\newcommand{\m}{\mathrm{min}}
\DeclareMathOperator{\ad}{ad} 
\DeclareMathOperator{\End}{End}
\DeclareMathOperator{\Frob}{Frob}
\DeclareMathOperator{\Ind}{Ind}
\DeclareMathOperator{\Gal}{Gal}
\DeclareMathOperator{\GL}{GL}
\DeclareMathOperator{\rH}{H}
\DeclareMathOperator{\Hom}{Hom}
\DeclareMathOperator{\Spec}{Spec}
\DeclareMathOperator{\Tr}{Tr} 
\DeclareMathOperator{\Ps}{Ps} 
\DeclareMathOperator{\Res}{Res}
\def\psim{{\psi_{_{\heartsuit}}}}
\title{LES VARI\'ET\'ES DE HECKE-HILBERT AUX POINTS CLASSIQUES DE POIDS $1$}
\author{\underline{BETINA Adel}}
\address{Universitat Politècnica de Catalunya, Facultad de Matemáticas y Estadística, 08034 Barcelona}
\email{adelbetina@gmail.com}
\begin{document}

\maketitle

\begin{abstract}
On montre que la variété de Hecke associée aux formes de Hilbert sur un corps totalement réel $F$ est  lisse aux points correspondant à certaines séries thêta de poids $1$ et  on donne aussi un critère pour que le morphisme poids soit étale en ces points.  Lorsque les séries thêta sont à multiplication réelle, on construit des formes surconvergentes propres généralisées qui ne sont pas classiques et on exprime leurs coefficients de Fourier à l'aide de logarithmes $p$-adiques de nombres algébriques. Notre approche utilise la théorie des déformations et pseudo-déformations galoisiennes.

\end{abstract}

\section{Introduction}

Les formes modulaires de Hilbert de poids $1$ correspondent dans le programme de Langlands à des représentations impaires de dimension deux des groupes de Galois des corps totalement réels. Deligne-Serre \cite{deligne-serre}, Rogawski-Tunnell \cite{R-T} et Ohta \cite{ohta} ont associé à une forme modulaire de poids $1$ une representation Galoisienne qui a la même fonction $L$ et leur méthode est basée sur des  congruences avec des formes modulaires de poids strictement plus grand que $1$. 

Le but de ce travail est de compter le nombre de familles de Hida qui se spécialisent en une forme classique de Hilbert de poids $1$. Géométriquement, c'est équivalent à la description de la structure locale des variété de Hecke-Hilbert au point correspondant à cette forme de poids $1$. On donne, sous certaines hypothèses, une réponse à cette question en utilisant la théorie des déformations galoisiennes de Mazur et quelques outils cohomologiques de la théorie du corps des classes.

On fixe un nombre premier $p$, un corps de nombres totalement réel $F$ de degré $n$ sur $\Q$, $E$ un corps $p$-adique qui décompose $F$, $\gn$ un idéal de l'anneau des entiers $\go$ de $F$ tel que $\mathrm{N}_{F/\Q} (\gn) \geq 4$ et $I_F$ un ensemble de $n$ plongements complexes de $\bar{\Q}$ dans $\bar{\Q}$ qui prolongent les plongements distincts de $F$ dans $\bar{\Q}$. Soit $\cE$ la variété de Hecke-Hilbert de niveau modéré $\mathfrak{n}$ associée à $F$ introduite par F.Andreatta, A.Iovita et V.Pilloni dans \cite{Pilloni}. Il existe un morphisme localement fini $\kappa: \cE \rightarrow \cW_F$ appelé le morphisme poids, où $\cW_F$ est la variété rigide sur $E$ qui représente les morphismes $\Z_{p}^{\times}\times \Res_{\Q}^F \mathbb{G}_m(\Z_p) \rightarrow \mathbb{G}_{m}$. 

Soit $f$ une forme modulaire de Hilbert cuspidale sur $F$, de poids $1$, propre, de niveau modéré $\gn$ et de pente finie dans le cas où $p$ divise le niveau de $f$. Soit $\rho:G_{F} \rightarrow \GL_ {2} (\bar{\Q}_p)$ la représentation galoisienne associée à $f$ par Rogawski-Tunnell \cite{R-T}. Puisque l'image de $\rho$ est finie, pour tout premier $\gp_i$ de $F$ au dessus de $p$, la restriction de $\rho$ au groupe de décomposition $G_{F_{\gp_i}}$ est la somme de deux caractères $\psi'_i \oplus \psi_i''$ dont l'un est non-ramifié. On dit que $f$ est régulière en $p$ si pour tout premier $\gp_i$ de $F$ au dessus de $p$, la condition $ \psi_i'\ne \psi_i''$ est vérifiée. 

Pour déformer $p$-adiquement $f$, on doit choisir une $p$-stabilisation de $f$ de pente finie (voir \cite[\S1.2]{wiles}). Une $p$-stabilisation de $f$ est ordinaire en $p$ et elle définit un point du lieu quasi-ordinaire de la variété de Hecke $\cE$ et même $f$ définit un point de la courbe de Hecke-Hilbert cuspidale de poids parallèle $\cC_F \hookrightarrow \cE$.

Soit $f$ une $p$-stabilisation d'une série thêta $\theta(\psi)$ de poids $1$ et de niveau modéré $\gn$, où $\psi:G_M \rightarrow \bar{\Q}^{\times}_p $ est un caractère d'ordre fini et $M$ une extension quadratique de $F$. La forme modulaire $f$ définit un point $x \in \cE$ (même $x \in \cC_F$) et on note respectivement $\mathcal {T}$, $\cT^{ord}$ les complétés des anneaux locaux de $\cE$ et $\cC_F$ en $x$ et $\Lambda$ le complété de l'anneau local de $\cW_F$ en $\kappa(x)$. 

Soient $\gm_{\Lambda}$ l'idéal maximal de $\Lambda$ et $\cT ' = \cT/\gm _ {\Lambda} \cT$ l'anneau local de la fibre de $\kappa(x)$ en $x$. Notons que $\cT ' = \cT/\gm _ {\Lambda} \cT$ est un anneau artinien, puisque $\kappa$ est localement fini.

Soit $S_p$ (resp. $S^p$) l'ensemble des premiers de $F$ au-dessus de $p$ qui se décomposent dans $M$ (resp. qui  sont inertes ou qui se ramifient dans $M$). Pour tout premier $\gp_i$ de $F$ au dessus de $p$ on note $e_i$ l'indice de ramification de $\gp_i$ et $f_i$ le degré d'inertie de $\gp_i$ (on a $\sum _ {\gp_i \mid p} e_i f_i=[F:\Q]=n$).

\begin{theorem} \label{main-thm-RM}\ Supposons que $f$ est $p$-régulière, $M$ est totalement réel et que la conjecture de Leopoldt est vraie pour $M$, alors : 
\begin{enumerate}

\item la variété $\cE$ est toujours lisse au point $x$ et la dimension de l'espace tangent de la fibre $\cT ' $ de $\kappa(x)$ en $x$ est égale à $\sum_{\gp_i \in S_p} e_i. f_i$.

\item la dimension de l'espace tangent de $\cT ^ {ord}$ est égale à $\max \{1, \sum_{\substack{\gp_i \in S_p}} f_i . e_i\}$.
\end{enumerate}
\end{theorem}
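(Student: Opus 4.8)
The plan is to translate the local structure of $\cE$, of the parallel-weight curve $\cC_F$ and of the weight fibre at $x$ into Galois deformation theory, and then to reduce every tangent-space dimension to a Selmer group attached to the character $\chi=\psi/\psi^\sigma$ of $G_M$, where $\sigma$ generates $\Gal(M/F)$.

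First I would set up an $R=\cT$ isomorphism. Since $\rho=\Ind_{G_M}^{G_F}\psi$ becomes reducible on each decomposition group $G_{F_{\gp_i}}$, the representation is only locally reducible at $p$, so I would work with the pseudo-deformation functor attached to $\Tr\rho$ together with the nearly-ordinary refinement singled out by the chosen $p$-stabilisation. The $p$-regularity hypothesis $\psi_i'\neq\psi_i''$ guarantees that at every $\gp_i\mid p$ the ordinary filtration, hence the local deformation condition, is unambiguous; this is what makes the pseudo-deformation problem representable and produces canonical isomorphisms identifying $\cT$, $\cT'$ and $\cT^{ord}$ with universal deformation rings for the conditions ``nearly ordinary of varying weight'', ``nearly ordinary of fixed weight'' and ``nearly ordinary of parallel weight'' respectively. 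Consequently the tangent space of $\cT'$ has dimension $\dim_E\rH^1_{\mathcal S'}(G_F,\ad^0\rho)$ for the Selmer condition $\mathcal S'$ encoding ``unramified outside $\gn p$, nearly ordinary, fixed weight'', and likewise for $\cT$ and $\cT^{ord}$ with the appropriate conditions $\mathcal S$, $\mathcal S^{ord}$ (using $\ad\rho$ once the weight is allowed to move).

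The next step is the real-multiplication decomposition. Mackey's formula gives $\ad\rho=\mathbf 1\oplus\eta\oplus\Ind_{G_M}^{G_F}\chi$, where $\eta$ is the quadratic character cutting out $M/F$; in particular $\ad^0\rho=\eta\oplus\Ind_{G_M}^{G_F}\chi$. I would first dispose of the $\eta$-summand: $\rH^1_{\mathcal S'}(G_F,\eta)$ is an $\eta$-isotypic piece of a ray class/unit group of $M$, and under the Leopoldt conjecture for $M$ (together with $M$ totally real, so that no complex place contributes) this Selmer group vanishes. By Shapiro's lemma the surviving summand is $\rH^1_{\mathcal S'}(G_F,\Ind_{G_M}^{G_F}\chi)\cong\rH^1_{\mathcal S''}(G_M,E(\chi))$, a Selmer group for the finite-order character $\chi$ over the totally real field $M$, with local conditions $\mathcal S''$ transported from $\mathcal S'$. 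This reduces part (i) to a single character computation over $M$.

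The heart of the argument, and the step I expect to be the main obstacle, is the local analysis at $p$ of $\rH^1_{\mathcal S''}(G_M,E(\chi))$. For a prime $\gp_i\in S_p$ that splits as $\gp_i=\gP_i\gP_i^\sigma$ in $M$, the two places of $M$ above $\gp_i$ deform independently; fixing the Hodge-Tate weight and the product of the two local characters (the determinant) still leaves the unramified, i.e. $U_p$-eigenvalue, direction free, and the associated local condition cuts out a subspace of $\rH^1(G_{M_{\gP_i}},E(\chi))$ of dimension $[M_{\gP_i}:\Q_p]=e_if_i$. For $\gp_i\in S^p$ (inert or ramified) there is a single place above $\gp_i$ and the action of $\sigma$ identifies it with its conjugate, forcing the $\chi=\psi^{1-\sigma}$-deformation to be trivial in that direction, so such primes contribute $0$. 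A global Greenberg--Wiles/Poitou--Tate count, in which Leopoldt for $M$ ensures the vanishing of the dual Selmer group (equivalently of the relevant $\rH^2$) so that no global relation cuts down the local freedom, then yields $\dim_E\rH^1_{\mathcal S''}(G_M,E(\chi))=\sum_{\gp_i\in S_p}e_if_i$, proving the tangent dimension in part (i). Smoothness of $\cE$ at $x$ follows by showing that the full deformation problem for $\cT$ is unobstructed, i.e. that $\dim_E$ of the tangent space of $\cT$ equals the Krull dimension of $\cT$ (that of $\cW_F$), which reduces to the same cohomological vanishing.

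For part (ii) I would compare $\cT^{ord}$ with $\cT'$ through the weight map. Writing $t$, $t'$, $t^{ord}$ for the tangent spaces of $\cT$, $\cT'$, $\cT^{ord}$ and $d\kappa$ for the differential of $\kappa$, one has $t'=\ker d\kappa$ and $t^{ord}=d\kappa^{-1}(\ell)$, where $\ell$ is the one-dimensional parallel-weight direction in the tangent space of $\cW_F$; hence $\dim_E t^{ord}=\sum_{\gp_i\in S_p}e_if_i+\dim_E(\im d\kappa\cap\ell)$, the last term being $0$ or $1$. The parallel Hida family through $x$ always exists, so when $\sum_{\gp_i\in S_p}e_if_i=0$ the parallel direction lies in $\im d\kappa$ and the dimension is $1$; the decisive point is that when $\sum_{\gp_i\in S_p}e_if_i\ge 1$ the extra real-multiplication deformations force $\kappa$ to be ramified enough along $\cC_F$ that the parallel direction is absorbed into $\ker d\kappa$, giving intersection $0$ and dimension $\sum_{\gp_i\in S_p}e_if_i$. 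Combining the two regimes yields $\max\{1,\sum_{\gp_i\in S_p}e_if_i\}$. Establishing this dichotomy, namely that the non-etale locus of the weight map is governed precisely by the split primes in $S_p$, is the Hilbert analogue of the weight-one computation over $\Q$ and is where the careful bookkeeping of the local conditions at $p$ must be carried out.
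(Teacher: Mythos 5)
Your overall skeleton does match the paper's strategy (deformation functors for $\rho$, a surjection $\cR\twoheadrightarrow\cT$, the decomposition $\ad\rho\simeq 1\oplus\varepsilon_M\oplus\Ind_M^F\psim$, and a split-versus-nonsplit analysis at $p$), but the step you yourself identify as the heart of the argument contains a genuine gap. Your Greenberg--Wiles count rests on the claim that ``Leopoldt for $M$ ensures the vanishing of the dual Selmer group.'' This is both unjustified and a misattribution of the key mechanism. The dual Selmer group here has coefficients in the Tate twist $\psim^{\pm 1}(1)$; Leopoldt for $M$ controls $\Hom(G_{M},\bar\Q_p)$, i.e.\ the trivial-character (cyclotomic) direction, and in the paper it is used only to kill the diagonal summand $1\oplus\varepsilon_M$ (the component $d$, via $d=d^\sigma$ and unramifiedness). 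It says nothing about the $\psim$-isotypic part, which is where the dimension $\sum_{\gp_i\in S_p}e_if_i$ comes from. Moreover the bookkeeping is delicate: at a prime $\gp_i\in S^p$ the character $\psim$ is \emph{locally trivial} (the place $v_i$ splits completely in $H$), so $\rH^0(M_{v_i},\psim^{-1})\neq 0$; with a strict local condition there, each such prime contributes $-1$ to the Greenberg--Wiles formula, and one would need $\dim \rH^1_{\mathcal L^\perp}=|S^p|$, not $0$. With the correct (unramified) condition coming from the equations at non-split primes, one still needs the vanishing of a dual Selmer group which is, concretely, a $\psim$-eigenspace inside the $p$-units of $H$ tensored with $\bar\Q_p$ subject to local conditions --- an object Leopoldt does not touch. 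What actually closes this gap in the paper is Proposition \ref{factoring global}: since $M$ is totally real and $\rho$ totally odd, $H$ is a CM field whose unit group is, up to torsion, that of its maximal real subfield $H_+$, fixed by the central complex conjugation $c$; as $\psim^{-1}(c)=-1$, global units contribute nothing to the $\psim^{-1}$-eigenspace, so $\rH^1(M,\psim^{-1})\simeq \Hom(\prod_{w\mid p}U^1_w,\bar\Q_p)[\psim^{-1}]$ is a \emph{direct sum of local contributions}, and the tangent spaces can then be read off place by place (Theorem \ref{dim c}). Some such argument --- or an honest computation of the dual Selmer group, which amounts to the same work --- is indispensable, and it is missing from your proposal.

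Two secondary points. First, you begin by asserting that representability plus $p$-regularity ``produces canonical isomorphisms'' identifying $\cT$, $\cT'$, $\cT^{ord}$ with universal deformation rings; this is backwards. The paper first constructs $\rho_\cT$ over $\cT$ (Proposition \ref{Deformation to T}, using the pseudo-character on $\cE$, reducedness, and global absolute irreducibility of $\rho$ --- so pseudo-deformations are a red herring: local reducibility at $p$ is no obstacle to deforming the honest representation), obtains only a surjection $\cR\twoheadrightarrow\cT$ (Proposition \ref{prop-surj}), and the isomorphism is a \emph{consequence} of the tangent-space computation $\dim t_{\cD}=[F:\Q]+1$ matched against the known equidimensionality of $\cE$. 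Second, in part (ii) your decisive dichotomy (``the extra RM deformations force the parallel direction into $\ker d\kappa$'') is asserted, not proved; the actual mechanism is elementary once the Selmer computation is done: if $S_p\neq\varnothing$, ordinarity makes the determinant-direction cocycle $d$ unramified at a place above $p$, and Leopoldt then forces $d=0$, so $t_{\cD^{ord}}=t^0_{\cD^{ord}}$ has dimension $\sum_{\gp_i\in S_p}e_if_i$, while if $S_p=\varnothing$ only the cyclotomic line survives, giving dimension $1$.
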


\medskip 

Lorsque $F=\Q$, le théorème ci-dessus a été démontré par Bellaïche et Dimitrov \cite{D-B} et aussi par Cho-Vatsal \cite{cho-vatsal} sous certaines hypothèses supplémentaires.

\medskip 
Soit $S^{\dag}_{1} (\gn,\chi)[[f]]$ l'espace propre généralisé de $f$ dans l'espace des formes modulaires surconvergentes de poids $1$. Sous les les hypothèses du Théorème \ref{main-thm-RM} et si $S_{p}$ est non vide, il existe un morphisme surjectif $\pi:\cT '\twoheadrightarrow \bar{\Q}_p [\epsilon]$ de noyau $I_\pi$. Ainsi, $I_\pi$ annule un sous-espace $S^{\dag}_{1}(\gn,\chi) [I_\pi]$ de $S^{\dag}_{1} (\gn,\chi) [[f]]$ de dimension $2$, et ce sous-espace contient une forme non-classique.

L'évaluation $f_{\mathfrak{c}}$ de toute forme modulaire surconvergente $f$ en les objets de Tate $(\mathbb{G}_m \otimes_{\mathfrak{o}} \mathfrak{c}^{-1} \mathfrak{d}^{-1}_{F})/q$ associés aux pointes standards $\infty(\mathfrak{c};\mathfrak{o})$, où $\mathfrak{c}$ parcourt les éléments de $\mathrm{CL}_F^{+}$ détermine son $q$-développement (voir \cite[\S2.3]{dim-weise}). Après une modification des coefficients de son $q$-développement, on obtient son développement adélique et ses coefficients de Fourier notés $a_{\mathfrak{q}}(f)$ où $\mathfrak{q}$ est un idéal de $\go$.

Suivant les définitions de \cite{darmon}, une forme surconvergente $f^{\dag}$ dans $S^{\dag}_{1} (\gn,\chi) [I_\pi]$ qui n'est pas un multiple de $f$ est appelée une forme généralisée attachée à $f$, et on dit qu'elle est normalisée si son premier coefficient de Fourier $a_{\go}(f^{\dag})$ de la $q$-expansion de $f^{\dag}$ est nul.

Pour n'importe quel idéal premier $\mathfrak{q}$ de $F$, on notera respectivement $a_{\mathfrak{q}}(f^{\dag}) $ et $a_{\mathfrak{q}}(f)$ les coefficients de Fourier de $f^\dag$ et de $f$. Fixons un plongement $\iota_p: \bar{\Q} \hookrightarrow \bar{\Q}_p$ et notons $\log_p$ la détermination standard du logarithme $p$-adique sur $ \bar{\Q}_p^ \times$. Soient $\sigma \in G_F$ un automorphisme non trivial sur $M$, $H$ le corps de nombres fixé par $\ker \psi/\psi^{\sigma}$, $\ell \nmid \gn p$ un premier de $F$ inerte dans $M$ et $\lambda$ un premier de $H$ au dessus de $\ell$. Choisissons  $u_\lambda$ dans $\cO_H[1/\lambda]^{\times} \otimes \Q$ une $\lambda$-unité de $H$ de $\lambda$-valuation égale à $1$. 

Soient $\sigma_\lambda \in \Gal(H_\lambda/F_\ell)\subset \Gal (H/F)$ le Frobenius en $\ell$ attaché à la place finie $\lambda$ et $I'_F$ le sous-ensemble de $I_F$ constitué par les plongements de $\bar{\Q}$ qui induisent les places de $H$ au dessus de $p$ apparaissant dans $S_p$ suivant le diagramme (\ref{place H}) de \S \ref{local unit p-adic}.

Le théorème suivant est une généralisation du théorème \cite[1.1]{darmon}.

\begin{theorem}\label{q exp}
On suppose que: 
\begin{enumerate}

\item $M$ est un corps totalement réel et la conjecture de Leopoldt est vérifiée pour $M$. 
\item l'ensemble $S_p$ n'est pas vide et $p$ est relativement premier au niveau de $\theta(\psi)$. 
\item $\theta(\psi)$ est $p$-régulière. 

\end{enumerate}

Soit $f$ une $p$-stabilisation de $\theta(\psi)$ telle que $\psi^{\sigma}(\Frob_{\gp_i})$ est la valeur propre de l'opérateur $U_{\gp_i}$ quand $\gp_i \in S_p$. Alors, on peut associer linéairement à tout $(\alpha_i)_{i \in I'_F} \in \bar{\Q}_p^{|I'_F|}$ un morphisme surjectif $\pi: \cT' \twoheadrightarrow \bar{\Q}_p[\epsilon]$ tel que pour tout $\ell\nmid \gn p$, {\small 
 $$ a_\ell(f^\dag) = \left\{ 
 \begin{array}{cl}
 0 & \mbox{ si  } \ell \text{ se décompose dans } M; \\
 \psi(\sigma \sigma_{\lambda})\sum_{g'_i \in I'_F}  \alpha_i \sum_{h \in \Gal(H/M)} \psi/\psi^{\sigma}(h).\log_p(g_i\circ h(u_\lambda))    & \mbox{ si  } \ell \text{ est inerte dans } M.   
 \end{array}
 \right. $$ }
\end{theorem}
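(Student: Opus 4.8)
Le plan est de traduire chaque vecteur tangent en une d\'eformation galoisienne au premier ordre, puis de lire les coefficients de Fourier de $f^\dag$ comme des traces de cocycles que la th\'eorie du corps de classes exprime via des logarithmes $p$-adiques. D'abord, le Th\'eor\`eme \ref{main-thm-RM} assure que $\cE$ est lisse en $x$ et que l'espace tangent de la fibre $\cT'$ a pour dimension $\sum_{\gp_i\in S_p} e_i f_i = |I'_F|$ ; chaque morphisme surjectif $\pi:\cT'\twoheadrightarrow\bar\Q_p[\epsilon]$ correspond ainsi \`a un choix de $(\alpha_i)_{i\in I'_F}$ et fournit une d\'eformation $\tilde\rho=(1+\epsilon\,b)\rho$ de $\rho=\Ind_M^F\psi$, o\`u $b:G_F\to\ad\rho$ est un $1$-cocycle. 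Via la correspondance galoisienne, si $f^\dag$ est la forme g\'en\'eralis\'ee associ\'ee \`a $\pi$, alors $a_\ell(f^\dag)$ est la partie $\epsilon$-lin\'eaire de la valeur propre de Hecke, soit $a_\ell(f^\dag)=\Tr\bigl(b(\Frob_\ell)\rho(\Frob_\ell)\bigr)$ pour $\ell\nmid\gn p$.

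Ensuite, je d\'ecomposerais $\ad^0\rho$ sous l'action de $\Gal(M/F)=\{1,\sigma\}$ : la partie anti-diagonale est isomorphe \`a $\Ind_M^F(\psi/\psi^\sigma)$, donc par le lemme de Shapiro la composante utile de $b$ appartient \`a un sous-groupe de Selmer de $\rH^1(G_M,\psi/\psi^\sigma)$, dont les conditions locales en $p$ sont dict\'ees par le choix des valeurs propres de $U_{\gp_i}$, $\gp_i\in S_p$. Comme $\psi/\psi^\sigma$ est d'ordre fini et d\'ecoupe $H$, la th\'eorie de Kummer jointe \`a la conjecture de Leopoldt pour $M$ doit identifier ce groupe de Selmer \`a la composante $(\psi/\psi^\sigma)$-isotypique de $\cO_H[1/\lambda]^\times\otimes\bar\Q_p$ ; les $\alpha_i$ param\`etrent alors les composantes locales en $p$ de $b$, et $\log_p$ surgit comme application locale de Kummer aux places de $H$ au-dessus de $p$ rep\'er\'ees par les plongements $g_i$, $i\in I'_F$.

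Pour conclure, il reste \`a \'evaluer la trace selon la place de $\ell$. Si $\ell$ se d\'ecompose dans $M$, alors $\Frob_\ell\in G_M$ et $\rho(\Frob_\ell)$ est diagonale, tandis que $b(\Frob_\ell)$ est anti-diagonale ; le produit est de trace nulle, d'o\`u $a_\ell(f^\dag)=0$. Si $\ell$ est inerte, $\rho(\Frob_\ell)$ est anti-diagonale de coefficient $\psi(\sigma\sigma_\lambda)$, et la trace extrait la composante anti-diagonale de $b(\Frob_\ell)$ ; la loi de r\'eciprocit\'e globale -- l'annulation de la somme des accouplements locaux de $b$ contre la classe de Kummer de la $\lambda$-unit\'e $u_\lambda$ -- transf\`ere la contribution en $p$, \`a savoir $\sum_{i\in I'_F}\alpha_i\sum_{h\in\Gal(H/M)}(\psi/\psi^\sigma)(h)\log_p(g_i\circ h(u_\lambda))$, sur la contribution en $\ell$, c'est-\`a-dire $b(\Frob_\ell)$. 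En multipliant par le coefficient $\psi(\sigma\sigma_\lambda)$ de $\rho(\Frob_\ell)$, on obtient la formule annonc\'ee.

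Je m'attends \`a ce que l'obstacle principal soit l'identification explicite de la classe de Selmer avec les logarithmes $p$-adiques de $u_\lambda$. Elle repose sur la dualit\'e de Poitou--Tate et la loi de r\'eciprocit\'e pour relier rigoureusement la composante locale en $p$ (les $\alpha_i$) \`a la valeur $b(\Frob_\ell)$, et la conjecture de Leopoldt pour $M$ y est indispensable pour garantir que le groupe de Selmer a exactement la dimension $|I'_F|$ et que l'accouplement local-global est non d\'eg\'en\'er\'e. Le contr\^ole pr\'ecis des normalisations -- l'identification de $\Frob_\ell$ \`a $\sigma\sigma_\lambda$, le choix de la $\lambda$-unit\'e de $\lambda$-valuation $1$, et la normalisation de l'application de Kummer -- est ce qui fixera le facteur constant $\psi(\sigma\sigma_\lambda)$ et la somme de caract\`eres sur $\Gal(H/M)$.
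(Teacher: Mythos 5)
Votre proposition suit essentiellement la même route que l'article : celui-ci construit le cocycle $\Psi'$ directement comme la fonctionnelle $u \mapsto \sum_{g_i \in I'_F} \alpha_i \sum_{g} \psim(g)\log_p(g_i\circ g(u))$ sur $(\cO_H\otimes\Z_p)^\times$ (via le théorème \ref{dim c}(ii), Leopoldt et l'annulation du $\psim$-espace propre des unités globales car $H$ est CM), lit $a_\ell(f^\dag)$ sur la trace de la déformation $\tilde f = f + f^\dag\epsilon$ à coefficients dans $\bar{\Q}_p[\epsilon]$ (cas décomposé $\Rightarrow 0$ car la partie diagonale du cocycle est nulle), puis évalue $\Psi'(\Frob_\lambda)$ par la loi de réciprocité en citant \cite[Lemmes 2.1 et 2.2]{darmon} -- exactement le schéma que vous décrivez. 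Seule imprécision : votre identification du groupe de Selmer avec la composante $\psim$-isotypique de $\cO_H[1/\lambda]^\times\otimes\bar{\Q}_p$ est incorrecte telle quelle (les dimensions ne correspondent pas) -- le groupe tangent est l'espace des fonctionnelles sur les unités locales en $p$ triviales sur les unités globales, c'est-à-dire le côté dual, et les $\lambda$-unités n'interviennent que dans l'évaluation en $\Frob_\ell$ par réciprocité, ce que votre troisième paragraphe décrit d'ailleurs correctement.
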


\medskip 
Les théorèmes suivants décrivent la structure locale de $\cE$ au point $x$ lorsque $M$ n'est pas totalement réel.

\begin{theorem}\label{CM-case} Soient $K$ un corps quadratique imaginaire dans lequel $p$ est décomposé, $\sigma \in G_\Q$ non trivial sur $K$ et $\xi:G_K \rightarrow \bar{\Q}_p^{\times}$ un caractère d'ordre fini tel que  $\xi/\xi^{\sigma}$ est d'ordre paire. Notons $M$ l'extension biquadratique de $\Q$ contenue dans le corps de nombres fixé par $\ker(\xi/\xi^{\sigma})$ et $F$ le sous-corps quadratique réel de $M$. Supposons que $\psi=\xi_{|G_M}$, $\psi_{|G_{M_{v_i}}} \ne \psi^{\sigma}_{|G_{M_{v_i}}}$ pour toute place première $v_i$ de $M$ au dessus de $p$ et que $(\psi/\psi^{\sigma})^2$ est non trivial. 

Alors le morphisme $\kappa:\cE \rightarrow \cW_F$ est étale au point associé à une $p$-stabilisation $f$ de $\theta(\xi_{|G_M})$.

\end{theorem}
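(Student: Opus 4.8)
L'id\'ee est de traduire le caract\`ere \'etale de $\kappa$ en $x$ en l'annulation d'un groupe de cohomologie galoisienne, puis de calculer ce groupe. Comme $\chi := \psi/\psi^\sigma$ v\'erifie $\chi^2\ne 1$ par hypoth\`ese, la repr\'esentation induite $\rho=\Ind_M^F\psi$ est irr\'eductible, de sorte que l'anneau local complet $\cT$ pro-repr\'esente un probl\`eme de d\'eformation ordinaire de $\rho$ et que l'on peut travailler avec de vraies d\'eformations plut\^ot que des pseudo-d\'eformations, comme dans la preuve du Th\'eor\`eme \ref{main-thm-RM}. Le morphisme $\kappa$ est \'etale en $x$ si et seulement si l'espace tangent relatif, c'est-\`a-dire l'espace tangent de la fibre $\cT'=\cT/\gm_\Lambda\cT$, est nul ; or cet espace tangent s'identifie \`a un groupe de Selmer $\rH^1_{\mathcal{L}}(G_F,\ad^0\rho)$ d\'ecoup\'e par les conditions non ramifi\'ees hors de $\gn p$ et par les conditions ordinaires \`a poids fix\'e aux premiers au-dessus de $p$. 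Tout se ram\`ene donc \`a prouver que $\rH^1_{\mathcal{L}}(G_F,\ad^0\rho)=0$.

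Je commencerais par la d\'ecomposition de l'adjoint d'une repr\'esentation induite,
\[
\ad^0\rho \;\cong\; \eta_{M/F}\;\oplus\;\Ind_M^F\chi,
\]
o\`u $\eta_{M/F}$ est le caract\`ere quadratique d\'efinissant $M/F$, ce qui scinde le groupe de Selmer en deux facteurs. Pour le second, le lemme de Shapiro fournit un isomorphisme canonique $\rH^1_{\mathcal{L}}(G_F,\Ind_M^F\chi)\cong \rH^1_{\mathcal{L}'}(G_M,\chi)$, transportant les conditions locales aux premiers $\gp_i\mid p$ de $F$ en des conditions aux premiers $v_i\mid p$ de $M$. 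Puisque $p$ est d\'ecompos\'e dans $K$ et que $\psi_{|G_{M_{v_i}}}\ne\psi^\sigma_{|G_{M_{v_i}}}$, on a $\chi_{|G_{M_{v_i}}}\ne 1$, de sorte que chaque $\rH^0(G_{M_{v_i}},\chi)$ est nul, ce qui rend les conditions locales \`a poids fix\'e particuli\`erement restrictives.

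Le c\oe ur de l'argument est alors un calcul de caract\'eristique d'Euler--Poincar\'e globale sur $M$, combin\'e \`a la dualit\'e de Poitou--Tate. L'hypoth\`ese que $M$ est de type CM y est d\'ecisive : comme $M$ est totalement imaginaire, toutes ses places archim\'ediennes sont complexes, donc le terme archim\'edien ne contribue pas, et comme $\chi$ est anticyclotomique ($\chi^\sigma=\chi^{-1}$) avec $\chi^2\ne 1$, il n'y a ni invariants globaux ni invariants locaux \`a corriger. En confrontant la formule aux conditions locales aux $v_i\mid p$, on obtient $\rH^1_{\mathcal{L}'}(G_M,\chi)=0$ ; le m\^eme d\'ecompte, appliqu\'e \`a $\eta_{M/F}$ dont les d\'eformations ne pr\'eservent pas le poids, annule \'egalement le premier facteur. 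Je m'attends \`a ce que la principale difficult\'e soit pr\'ecis\'ement cette \'etape : il faut identifier sans ambigu\"it\'e les conditions locales transport\'ees en $p$ et v\'erifier que le d\'ecompte d'Euler s'\'equilibre exactement, afin que le groupe de Selmer ne soit pas seulement fini mais nul. C'est l\`a que le corps $M$ totalement imaginaire se distingue du cas totalement r\'eel : le calcul identique pour un $M$ totalement r\'eel redonne la valeur positive $\sum_{\gp_i\in S_p}e_i f_i$ du Th\'eor\`eme \ref{main-thm-RM}, tandis que les places complexes du corps CM $M$ font dispara\^itre cette contribution, d'o\`u le caract\`ere \'etale. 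Cela explique aussi pourquoi, contrairement aux Th\'eor\`emes \ref{main-thm-RM} et \ref{q exp}, aucune hypoth\`ese de Leopoldt n'est ici n\'ecessaire.
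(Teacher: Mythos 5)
Votre r\'eduction initiale suit bien la strat\'egie de l'article : traduire le caract\`ere \'etale de $\kappa$ en $x$ par $\cT'\simeq\bar{\Q}_p$, d\'ecomposer $\ad^0\rho\simeq \epsilon_M\oplus\Ind_M^F\psim$, et ramener le probl\`eme \`a la cohomologie du caract\`ere $\psim$ sur $M$ (la partie $\epsilon_M$ se traite en effet sans difficult\'e, un homomorphisme $G_M\rightarrow\bar{\Q}_p$ partout non ramifi\'e \'etant trivial). En revanche, l'\'etape centrale de votre argument --- d\'eduire l'annulation du groupe de Selmer d'un calcul de caract\'eristique d'Euler--Poincar\'e combin\'e \`a la dualit\'e de Poitou--Tate --- comporte une lacune r\'eelle. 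La formule de Greenberg--Wiles ne fournit que la \emph{diff\'erence} des dimensions entre le groupe de Selmer et son dual ; pour conclure, il faudrait borner le Selmer dual, ce qui est un probl\`eme \'equivalent au probl\`eme initial. Concr\`etement, comme $S^p=\varnothing$ et que la condition d'ordinarit\'e n'impose l'annulation du cocycle qu'en \emph{une} place ($v_i$ ou $v_i^{\sigma}$) au-dessus de chaque $\gp_i$, l'application de localisation pertinente va d'un espace de dimension $[F:\Q]=2$ (une fois la dimension exacte de $\rH^1(M,\psim^{-1})$ connue) vers un espace de dimension $\sum_{\gp_i\mid p} e_i f_i=2$ : aucun d\'ecompte de dimensions ne peut \'etablir qu'une application entre deux espaces de m\^eme dimension est injective. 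C'est un \'enonc\'e de type Leopoldt (ind\'ependance de logarithmes $p$-adiques d'unit\'es), et non un \'enonc\'e formel ; la phrase o\`u vous admettez qu'il faut v\'erifier que le d\'ecompte d'Euler s'\'equilibre exactement d\'esigne pr\'ecis\'ement ce que vous ne d\'emontrez pas.

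L'article comble ce trou par deux ingr\'edients absents de votre proposition. D'une part, le th\'eor\`eme d'Ax--Baker--Brumer : $H$ \'etant ab\'elien sur le corps quadratique imaginaire $K$, la conjecture de Leopoldt est \emph{vraie} pour $H$ (lemme \ref{multiplicity}) ; c'est ce qui donne $\dim\rH^1(M,\psim^{-1})=[F:\Q]$ exactement (th\'eor\`eme \ref{dimension}), ainsi que $\dim t_{\cD}=[F:\Q]+1$ (corollaire \ref{CM tg}), ingr\'edient de l'isomorphisme $\cR\simeq\cT$ que vous supposez implicitement en affirmant que $\cT$ pro-repr\'esente le probl\`eme de d\'eformation (l'article \'etablit d'abord la surjection $\cR\twoheadrightarrow\cT$, proposition \ref{prop-surj}, puis l'isomorphisme). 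Votre conclusion selon laquelle aucune hypoth\`ese de Leopoldt n'est ici n\'ecessaire est donc trompeuse : Leopoldt est bel et bien utilis\'e, il est simplement acquis inconditionnellement dans ce cas di\'edral. D'autre part, l'injectivit\'e de la localisation en $p$ (point (ii) du th\'eor\`eme \ref{dimension}) est d\'emontr\'ee par un argument explicite : les g\'en\'erateurs $f_i$ des espaces propres $\Hom(G_H,\bar{\Q}_p)[\psi_i]$ sont \'ecrits comme combinaisons de $\log_p(g^{-1}(\cdot))$ ; le th\'eor\`eme des unit\'es de Dirichlet (argument de Minkowski) montre qu'un \'el\'ement trivial sur les unit\'es locales en $w_0$ et sur les unit\'es globales se factorise par la norme, donc ne peut appartenir \`a l'espace propre, d'o\`u $a^i_1\neq 0$ et $a^i_c\neq 0$ ; enfin, un d\'eterminant de Vandermonde exclut toute combinaison lin\'eaire non triviale s'annulant sur les groupes de d\'ecomposition aux places $w_i$. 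Rien dans votre proposition ne remplace cette \'etape, qui constitue le v\'eritable contenu du th\'eor\`eme.
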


\begin{theorem}\label{thm M mixt}

Soient $p\geq 3$, $\mathbb{F}$ un corps fini de caractéristique $p$ et $\psi$ le relèvement de Teichmüller d'un caractère $\overline{\psi}:G_M\rightarrow \mathbb{F}^{\times}$ tel que $\overline{\psi}/\overline{\psi}^{\sigma}$ n'est pas quadratique. Supposons que :
\begin{enumerate}
\item Tout premier $\mathfrak{q}$ de $F$ qui divise le conducteur de $\bar{\psi}$ se décompose dans $M$, $\bar{\psi}$ est ramifié sur un facteur de $\mathfrak{q}$ et non ramifié sur l'autre et $p$ est non ramifié dans $M$.

\item La restriction de $\bar{\rho}=\Ind^F_M \bar{\psi}$ à $\Gal(\bar{\Q}/{F(\sqrt{(-1)^{(p-1)/2}p)}})$ est absolument irréductible et $\bar{\rho}$ est $p$-distinguée. 
\item $M$ a $2r$ plongements complexes et $[F:\Q]-r-|S^p|-\sum_{\substack{\gp_i \in S^p}} f_i . e_i>0$
\end{enumerate}

Alors la variété rigide $\cE$ est ramifiée sur l'espace des poids $\cW_F$ au point $x$.

\end{theorem}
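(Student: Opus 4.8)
Le plan est de montrer que la fibre $\cT'=\cT/\gm_\Lambda\cT$ n'est pas r\'eduite \`a son corps r\'esiduel $\bar\Q_p$, c'est-\`a-dire que son espace tangent $\gm_{\cT'}/\gm_{\cT'}^2$ est non nul~; comme $\kappa$ est localement fini, ceci \'equivaut exactement \`a la ramification de $\kappa$ en $x$. Gr\^ace \`a l'hypoth\`ese (2) (irr\'eductibilit\'e de la restriction de $\bar\rho$ \`a $\Gal(\bar\Q/F(\sqrt{(-1)^{(p-1)/2}p}))$ et caract\`ere $p$-distingu\'e), le formalisme de d\'eformations des sections pr\'ec\'edentes identifie $\cT$ \`a un anneau de d\'eformations galoisiennes ordinaires de $\bar\rho=\Ind^F_M\bar\psi$, et l'espace tangent de $\cT'$ \`a un groupe de Selmer $H^1_{\mathcal F}(G_F,\ad^{0}\rho)$ form\'e des classes ordinaires en $p$ \`a poids fix\'e. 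J'utiliserais alors la d\'ecomposition $\ad^{0}\rho\cong\epsilon_{M/F}\oplus\Ind^F_M\chi$ avec $\chi=\psi/\psi^\sigma$~: comme ce groupe de Selmer respecte cette somme directe, il suffit de produire une classe non nulle dans le facteur $\Ind^F_M\chi$ (les d\'eformations non induites), lequel par le lemme de Shapiro s'identifie \`a un sous-groupe $H^1_{\mathcal F}(G_M,\chi)$ de $H^1(G_M,\chi)$.

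Je pr\'eciserais ensuite les conditions locales d\'efinissant $H^1_{\mathcal F}(G_M,\chi)$~: condition rel\^ach\'ee (tout le $H^1$ local) en la place $\gP_1$ de $M$ au-dessus de chaque $\gp_i\in S_p$ correspondant \`a la droite ordinaire fix\'ee par la $p$-stabilisation, condition stricte (classe localement triviale) en la place conjugu\'ee $\gP_2$, et de m\^eme aux places au-dessus de $S^p$. Le point d\'elicat est qu'en une place $\gP\mid\gp_i$ avec $\gp_i\in S^p$ inerte, la restriction $\chi_{\gP}$ est \emph{triviale}~: en effet $\psi$ y est non ramifi\'e (hypoth\`ese (1), $p$ non ramifi\'e dans $M$) et $\sigma$ fixe le Frobenius de $M_{\gP}/F_{\gp_i}$, de sorte que $\psi_{\gP}=\psi^\sigma_{\gP}$. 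C'est cette trivialit\'e locale, jointe aux places complexes de $M$, qui fait appara\^itre les termes correctifs $-|S^p|$ et $-r$ absents du cas totalement r\'eel.

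Je calculerais enfin la dimension par la formule d'Euler-Poincar\'e globale de Greenberg-Wiles. Les $2[F:\Q]-2r$ places archim\'ediennes r\'eelles de $M$ ne contribuent pas ($p$ \'etant impair et $\chi$ y \'etant impair, comme le force le poids $1$ de $\rho$), tandis que chacune des $r$ places complexes contribue $-1$~; les places de $S_p$ contribuent $+\sum_{\gp_i\in S_p}e_i f_i$ et celles de $S^p$ contribuent $-|S^p|$. Les hypoth\`eses (2) et la non-quadraticit\'e de $\bar\psi/\bar\psi^\sigma$ assurent $H^0(G_M,\chi)=0$ ainsi que l'annulation des termes locaux duaux requis~; comme le terme de Selmer dual est positif ou nul, on obtient la minoration
$$\dim_{\bar\Q_p}H^1_{\mathcal F}(G_M,\chi)\ \geq\ \sum_{\gp_i\in S_p}e_i f_i-r-|S^p|\ =\ [F:\Q]-r-|S^p|-\sum_{\gp_i\in S^p}f_i e_i,$$
strictement positive par l'hypoth\`ese (3) (la sp\'ecialisation $r=0$, $S^p=\emptyset$ redonnant le Th\'eor\`eme \ref{main-thm-RM}, et le cas $[F:\Q]=1$, $r=1$ le cas \'etale CM). Ceci fournit une d\'eformation non triviale \`a poids fix\'e, donc un vecteur tangent non nul de $\cT'$, et $\kappa$ est ramifi\'e en $x$. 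L'obstacle principal sera l'identification pr\'ecise de l'espace tangent de la fibre avec ce groupe de Selmer --- en particulier le traitement du facteur $\epsilon_{M/F}$ et des conditions locales aux places inertes o\`u $\chi$ devient trivial --- ainsi que la v\'erification des annulations de $H^0$~; la contribution archim\'edienne $-r$ des places complexes de $M$ est le c\oe{}ur conceptuel qui distingue ce cas du cas totalement r\'eel trait\'e au Th\'eor\`eme \ref{main-thm-RM}.
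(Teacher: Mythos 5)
Your dimension count is correct and follows a genuinely different route to the paper's key lower bound (Proposition \ref{tg M mixt}). The paper never invokes Greenberg--Wiles: it bounds $\dim \rH^1(M,\psi_{_{\heartsuit}}^{-1})$ below by $2[F:\Q]-r$ via class field theory, decomposing $\Hom(G_H,\bar\Q_p)$ as a $\Gal(H/F)$-module and bounding the multiplicity of $\Ind_M^F(\psi_{_{\heartsuit}}^{-1})$ using the exact sequence (\ref{global-local}) and Minkowski's proof of the unit theorem (Lemme \ref{multiplicity-Mix}, Proposition \ref{dimension neither nor}), then subtracting the local conditions at $S_p$ (Lemme \ref{iso loc}) and at $S^p$. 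Your Poitou--Tate computation, dropping the dual Selmer term, lands on exactly the same bound $[F:\Q]-r-|S^p|-\sum_{\gp_i\in S^p}e_if_i$, with the same archimedean mechanism (real places of $M$ contribute $0$ by oddness of $\rho$, complex places $-1$ each), and like the paper it needs no Leopoldt hypothesis. One caveat: at places of $S^p$ the ordinary condition does \emph{not} respect the splitting $\ad^0\rho\simeq\epsilon_M\oplus\Ind_M^F(\psi/\psi^{\sigma})$ --- the equations (\ref{non-split}) mix the diagonal and antidiagonal entries --- so, as in the paper, you must first use the fixed determinant to force $a=d=0$ and only then observe that strict classes in the induced factor satisfy the mixed conditions; you flag this point, and it is indeed where the inclusion (\ref{CM relative}) does the work.

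The genuine gap is the step you dispose of in one sentence: \og le formalisme de d\'eformations des sections pr\'ec\'edentes identifie $\cT$ \`a un anneau de d\'eformations \fg. The previous sections give only a \emph{surjection} $\cR\twoheadrightarrow\cT$ (Proposition \ref{prop-surj}), and a surjection goes the wrong way: it induces an injection of the tangent space of $\cT'$ into that of $\cR'$, so producing a nonzero tangent vector of the deformation ring says nothing about $\cT'$ (think of $\bar\Q_p[x]/(x^2)\twoheadrightarrow\bar\Q_p$). In the totally real and CM cases the paper upgrades this surjection to an isomorphism by computing $\dim t_{\cD}=[F:\Q]+1$ exactly, matching the Krull dimension of $\cT$; in the mixed case that route is closed, since only a \emph{lower} bound on tangent spaces is available (no analogue of Ax--Baker--Brumer controls the relevant units). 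The paper's proof therefore rests on Fujiwara's modularity lifting theorem for the residual representation $\bar\rho=\Ind_M^F\bar\psi$: hypothesis (1) is precisely Fujiwara's minimality condition (plus $p$ unramified, ensuring $({\bf LD}_p)$), hypothesis (2) the Taylor--Wiles condition, giving $\cR^{\m}_{\bar\rho}\simeq h^{n,ord}$; one then descends to the completed local ring at $x$ via Kisin's lemma on generic fibres of deformation functors and Nyssen--Rouquier (generation by traces), obtaining $\cR_\m\simeq\cT$ and hence $\cR'_\m\simeq\cT'$ (Lemme \ref{R=T M mixt}), and finally checks that the minimality condition does not shrink the tangent space --- automatic because the cocycles factor through $\Hom(G_H,\bar\Q_p)\otimes\ad\rho$ and are thus unramified outside $p$. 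Your proposal never uses hypothesis (1) and never invokes any modularity-lifting input, so as written the Selmer class you construct lives in the tangent space of $\cR'_\m$, and transporting it to $\cT'$ --- the whole point --- is exactly what Fujiwara's theorem is needed for.
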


\subsection*{Remarque:}
Nous avons appris récemment que S.V.Deo a annoncé un résultat similaire au théorème \ref{main-thm-RM} dans \cite{V.Deo} en utilisant une méthode différente pour calculer les dimensions des espaces tangents de nos problèmes de déformation. Il a annoncé un résultat similaire au théorème \ref{thm M mixt} sous l'hypothèse que la conjecture de {\it Schanuel} est vraie pour le corps fixé par $\ker \ad \rho $.

\medskip

Notre étude de la structure locale de la variété rigide $\cE$ au voisinage d'une forme classique de poids $1$ donne l'application suivante sur la théorie des fonctions $L$ $p$-adiques. Mazur, Kitagawa, Panchishkin, Teiltelbaum et Dimitrov ont construit une fonction $L$ $p$-adique pour une famille de Hida  sous certaines hypothèses (voir les conditions (A) ou (B) dans \cite[p.105]{kitagawa}). Leur propriété principale est la suivante : chaque spécialisation en une forme classique non critique $g$ est égale, à une unité près, à la fonction $L$ $p$-adique $L_p(g,s)$ attachée a $g$ (voir \cite{A-V} , \cite{MTT}, \cite{Pan} et \cite{dim L}).
Soit $\cW^{*}_F$ l'espace analytique rigide définie sur $\Q_p$ dont les $A$-points sont en bijection avec $\Hom(G_{F_{p,\infty}}, A^{\times})$, où $G_{F_{p,\infty}}$ est l'extension maximale non ramifiée en dehors de $p$ et des places à l'infini. Ainsi, $\cW^{*}_F= \coprod_ {\epsilon \in \pi_0((F\otimes \mathbb{R})^{\times})} \cW_{F}^{\epsilon}$ où $\cW^{\epsilon}_F$ est l'espace analytique rigide définie sur $\Q_p$ dont les $A$-points sont en bijection avec $\Hom(G_{F_{p}}, A^{\times})$. 

Les méthodes utilisées dans \cite{Bcrit} (détaillées dans \cite[Corollary VI.4.2]{Bbook}) et dans \cite{hansen} donnent le corollaire suivant : 

\begin{cor}\label{L}
Soit $f$ une forme classique de poids $1$ régulière en $p$ qui satisfait soit les hypothèses du théorème \ref{CM-case} ou soit $S_p$ est vide et $M$ est un corps totalement réel qui vérifie la conjecture de Leopoldt. Alors, il existe un voisinage admissible $\mathcal{U}$ de $f$ dans $\cE$, deux constantes réelles $0<c<C$, et une fonction analytique $L_p^\epsilon$ sur $\mathcal{U} \times \cW^\epsilon_F$, telle que pour toute forme propre non critique $g \in \mathcal{U}$,
 et pour tout $s \in \cW^\epsilon_F$  on a 
 $$L_p^\epsilon(g,s) = e^\epsilon(g) L_p(g,s),$$
 où $e^\epsilon(g)$ est une période $p$-adique qui vérifie $c < |e^{\epsilon}(g)|_p < C$.
Pour tout $g\in \mathcal{U}$ les fonctions $s \mapsto L_p^{\epsilon}(g,s)$ sont bornées.
\end{cor}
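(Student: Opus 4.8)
Le point de départ est que, sous chacune des deux familles d'hypothèses, le morphisme poids $\kappa$ est étale en $x$. Dans le cas CM c'est exactement la conclusion du Théorème \ref{CM-case}. Lorsque $S_p$ est vide et $M$ est totalement réel vérifiant Leopoldt, le Théorème \ref{main-thm-RM}(1) affirme que $\cE$ est lisse en $x$ et que l'espace tangent de la fibre $\cT'$ est de dimension $\sum_{\gp_i \in S_p} e_i f_i = 0$; ainsi $\cT' = \bar{\Q}_p$ est un point réduit. Comme $\cE$ est équidimensionnelle de la même dimension que $\cW_F$, la lissité de $\cE$ en $x$ jointe à la dimension nulle de la fibre entraîne la platitude de $\kappa$ en $x$ (par égalité des dimensions), et la non-ramification provient de la nullité de l'espace tangent de $\cT'$; donc $\kappa$ est étale en $x$. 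On fixe alors un voisinage affinoïde $\cU$ de $x$ dans $\cE$, que l'on pourra rétrécir, sur lequel $\kappa$ induit un isomorphisme sur un ouvert affinoïde de $\cW_F$ et qui est contenu dans le lieu ordinaire (possible puisque $x$ est quasi-ordinaire).

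La construction de $L_p^\epsilon$ suit la méthode de Bellaïche \cite{Bcrit}, \cite{Bbook} et de Hansen \cite{hansen} via la cohomologie surconvergente, c'est-à-dire les symboles modulaires $p$-adiques. On dispose sur $\cU$ d'un faisceau cohérent $\mathcal{M}^\epsilon$ — la partie isotypique, signée par $\epsilon$, du degré médian de la cohomologie à support compact à coefficients dans le module des distributions analytiques sur $\cW^\epsilon_F$ — dont la fibre en un point classique non critique $g$ est la droite engendrée par le symbole modulaire canonique calculant $L_p(g,s)$. La multiplicité un jointe à la lissité de $\cE$ en $x$ entraîne, après rétrécissement de $\cU$, que $\mathcal{M}^\epsilon$ est libre de rang un sur $\cO(\cU)$; on en choisit un générateur $\Phi^\epsilon$.

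On définit $L_p^\epsilon \in \cO(\cU) \mathbin{\widehat{\otimes}} \cO(\cW^\epsilon_F) = \cO(\cU \times \cW^\epsilon_F)$ comme la transformée de Mellin de $\Phi^\epsilon$, c'est-à-dire l'intégration de $\Phi^\epsilon$ contre le caractère universel sur $\cW^\epsilon_F$. Pour $g \in \cU$ propre non critique, le théorème de contrôle, valable par non-criticité, identifie la spécialisation de $\Phi^\epsilon$ en $g$ à un multiple scalaire non nul $e^\epsilon(g)$ du symbole canonique de $g$, d'où $L_p^\epsilon(g,s) = e^\epsilon(g) L_p(g,s)$. La période $e^\epsilon(g)$ est la valeur d'une fonction rigide inversible sur $\cU$ (le rapport entre le générateur de famille $\Phi^\epsilon$ et la spécialisation canonique, inversible en $x$); le principe du maximum sur un affinoïde assez petit borne donc $|e^\epsilon(g)|_p$ entre deux constantes $0 < c < C$.

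Enfin, la bornitude de $s \mapsto L_p^\epsilon(g,s)$ résulte de l'ordinarité: sur $\cU$ contenu dans le lieu ordinaire, les valeurs propres de $U_{\gp_i}$ sont des unités $p$-adiques (pour $f$ elles valent $\psi^\sigma(\Frob_{\gp_i})$, racines de l'unité), donc les distributions sous-jacentes à $\Phi^\epsilon$ sont des mesures bornées et leurs transformées de Mellin sont des fonctions d'Iwasawa bornées sur $\cW^\epsilon_F$. Le principal obstacle est la construction correctement normalisée du faisceau $\mathcal{M}^\epsilon$ et la preuve de sa liberté de rang un, ainsi que la comparaison des périodes: c'est précisément l'étaleté de $\kappa$ en $x$ qui rend la géométrie locale assez rigide pour que ces arguments de Bellaïche--Hansen s'appliquent.
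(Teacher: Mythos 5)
Your proof is correct and takes essentially the same approach as the paper: the paper likewise reduces the corollary to the \'etaleness of $\kappa$ at $x$ (Theorem \ref{CM-case} in the CM case, and Theorem \ref{main-thm-RM} with $\sum_{\gp_i \in S_p} e_i f_i = 0$, hence $\cT' \simeq \bar{\Q}_p$, in the totally real case with $S_p$ empty) and then simply invokes the methods of Bella\"{\i}che \cite{Bcrit} (as detailed in \cite{Bbook}) and Hansen \cite{hansen}. The paper offers no detail beyond this citation, so your expanded account of the overconvergent-cohomology construction, the rank-one coherent sheaf, the period comparison via the control theorem, and the boundedness from ordinarity is a faithful (indeed more explicit) rendering of the intended argument.
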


La fonction $L_p^\epsilon(f_\alpha, .) $ est uniquement déterminée à une constante inversible près, et nous donne aussi une définition naturelle d'une fonction $L$ $p$-adique associée à $f$.

\medskip 

On va expliquer maintenant les principales idées derrière la preuve des théorèmes \S \ref {main-thm-RM} et \S \ref {CM-case}. Dans le \S\ref{Tangent space}, nous introduisons deux problèmes de déformation de $ \rho $ représentables par les anneaux locaux $ \cR$ et $ \cR^{ord} $ tels que $ \cR $ s'envoie surjectivement sur $ \cT $ par la proposition \ref {prop-surj}. Le calcul de l'espace tangent de $ \cR $ représente une partie importante de la preuve. Nous montrons dans les théorèmes \ref {dim c} et \ref {dimension} que la dimension de l'espace tangent de $ \cR $ est toujours $ [F:\Q] + 1 $ et donc la surjection ci-dessus est un isomorphisme d'anneaux locaux réguliers et on a $ \cR ^ {ord} \simeq \cT ^ {ord} $. L'étude de l'algèbre $ \cT '$ donne un critère précis pour que $\kappa$ soit étale en $x$. L'espace tangent de $ \cR $ est isomorphe à un sous-espace de $ \rH ^ 1 (F, \ad \rho) $ qui satisfait certaines conditions locales pour les premiers de $F$ au dessus de $ p $, mais vu que $ \rho $ est d'image finie, par inflation-restriction, l'espace tangent peut être  vu comme un sous-espace de $ (\Hom (G_H, \bar \Q_p) \otimes \ad \rho) ^ {\Gal (H / F)} $, où $H$ est une extension galoisienne finie de $F$. Pour calculer la dimension de cet espace tangent, il suffit de déterminer la dimension du $ \psi/\psi^{\sigma}$-espace propre à l'intérieur de $ \Hom (G_H, \bar {\Q} _p) $. En effet, sa dimension peut être facilement bornée par la théorie du corps de classe, et quand $ M $ est un corps totalement réel, on constate le phénomène suivant: le corps $ H $ est CM et le $ \psi/\psi^{\sigma}$-espace propre à l'intérieur des unités globales est trivial (voir \ref{factoring global}). Ainsi, nous pouvons construire une base de l'espace tangent de $ \cT '$ en termes de logarithme $ p $-adique.
D'autre part, lorsque $ M $ n'est ni totalement réel, ni totalement complexe, on montre que $ \cR_ {\m} \simeq \cT $ en utilisant un résultat de Fujiwara \cite{Fujiwara}.

\medskip

\section{Vari\'et\'es de Hecke-Hilbert }\label{wt_1h}

Les formes modulaires $p$-adiques de Hilbert sur $F$ peuvent être vues comme des fonctions sur les $\Z_p^{\times} \times \mathrm{Res}_{\Q}^F \mathbb{G}_m(\Z_p)$-torseurs de la trivialisation de la tour d'Igusa qui est le pro-revêtement étale du lieu ordinaire de l'analytifié du schéma modulaire de Hilbert donné par la limite projective des duaux des sous-groupes canoniques. Les fonctions homogènes de poids $(w,\upsilon)$ pour l'action de $\Z_p^{\times} \times \mathrm{Res}_{\Q}^F \mathbb{G}_m(\Z_p)$ sont les formes modulaires $p$-adiques de Hilbert de poids $(w,\upsilon)$.  Donc, on obtient un espace de dimension infinie qui se fibre sur l'espace des poids, et grâce à l'existence du sous-groupe canonique sur le lieu ordinaire, on peut plonger l'espace des formes modulaires de Hilbert dans l'espace des formes modulaires $p$-adiques de Hilbert. Pour étudier les familles $p$-adiques de pente finie, V.Pilloni, F.Andreatta et A.Iouvita ont prolongé la construction précédente à un voisinage surconvergent strict du lieu ordinaire dans le but d'appliquer la théorie spectrale de Coleman-Mazur à l'opérateur complètement continue $U_p$ et en utilisant la machinerie des variétés rigides de Hecke due à Buzzard \cite {buzzard}, ils ont obtenu la variété rigide de Hecke-Hilbert désirée $ \cE $ sur $E$. D'autre part, Kisin et Lai ont construits dans \cite {K-L} la courbe de Hecke-Hilbert cuspidale $\cC_F$ de poids parallèle de niveau modéré $ \gn $ en étendant la construction de Coleman-Mazur de la courbe de Hecke \cite {coleman-mazur}. On peut identifier $ \cC_F $ avec le sous-espace fermé de $ \cE $ donné par l'équation $ \upsilon = 0 $. Selon les résultats de \cite {K-L}, il existe un morphisme localement fini surjectif et plat $ \kappa: \cC_F \rightarrow \cW _ {F,\upsilon = 0} $, où $ \cW _ {F,\upsilon = 0} $ est la sous-variété fermé de $\cW _F$ définie par l'équation $ \upsilon = 0 $ ($ \kappa $ est la restriction de $ w $ à $ \cC_F $). Notons que $\cT'$ est aussi l'anneau local en $x$ de la fibre $\kappa^{-1}(\kappa(x))$. Le lieu $ \cE^{n.ord} $ de $ \cE $ où $ | U_p |_p = 1 $ est ouvert et fermé dans $ \cE $ et est appelé le lieu quasi-ordinaire. Il est connu que le lieu quasi-ordinaire $\cE^{n.ord}$ est isomorphe à la fibre générique de l'algèbre de Hecke quasi-ordinaire en $p$.

D'après \cite {buzzard} et \cite {Pilloni}, la variété de Hecke $ \cE $ est équidimensionnelle de dimension $ [F:\Q]+ 1 $, réduite et équipée d'un morphisme localement fini surjectif $ \kappa: \cE \rightarrow \cW_F $ appelé le morphisme poids, où $ \mathcal {W}_F$ est l'espace rigide sur $E$ représentant les morphismes $ \Z_ {p} ^ {\times} \times \Res _ {\Q}^F \mathbb {G} _m (\Z_p) \rightarrow \mathbb{G} _ {m} $. Par construction de $ \cE $, il existe un morphisme $  \Z[(T_\ell)_{\ell \nmid p \mathfrak{n}},U_p] \rightarrow \mathcal {O} _ {\cE} ^ {rig} (\cE) $ tel que les images de $(T_\ell)_{\ell \nmid p \mathfrak{n}}$ et $U_p$ sont des sections globales de $ \mathcal {O} ^ {rig} _ {\cE} $ bornées par $ 1 $; l'application canonique '' système de valeurs propres '' $ \cE (\C_p) \rightarrow \Hom ( \Z[(T_\ell)_{\ell \nmid p \mathfrak{n}},U_p], \C_p) $ est injective et induit une correspondance bijective entre l'ensemble de $ \C_ {p} $-points de $ \cE $ de poids $(w, \upsilon)$ et l'ensemble des formes modulaires cuspidales surconvergentes de Hilbert propres de niveau modéré $\gn$, de poids $(w, \upsilon)$, avec des coefficients de Fourier dans $ \C_ {p} $ et de pente finie. Puisque l'image de $ \Z[(T_\ell)_{\ell \nmid p \mathfrak{n}},U_p]$ dans $\cO^{rig}_{\cE}(\cE)$ est relativement compacte et que la $E$-algèbre $ \mathcal {O} _ {\cE} ^ {rig} (\cE) $ est réduite, il existe un pseudo-caractère continue:

 \begin{equation}\label{pseudo-character }
 \Ps_{\cE}: G_{F,\mathfrak{n} p} \rightarrow \cO(\cE)
 \end{equation}
 qui envoie  $\Frob_\ell$ vers $T_\ell$ pour tout $\ell \nmid \mathfrak{n} p$ (voir \cite[\S5]{Pilloni} and \cite{bellaiche-chenevier-book} pour plus de détails).

D'après le théorème \cite[1.1]{P-S}, le morphisme poids $\kappa:\cE \rightarrow \mathcal{W}_F$ est étale en les points {\it non critiques} et { \it $p$-réguliers}.

En général, on n'a pas beaucoup de résultats sur la géométrie des variétés de Hecke-Hilbert. Par exemple, on ne sait pas si elles ont un nombre fini de composantes ou si elles sont propres sur l'espace des poids (voir \cite{Shin}).  Lorsque $F=\Q$, Diao et Liu ont montré dans \cite{diao} que la courbe de Coleman-Mazur est propre sur l'espace des poids. On sait aussi que la courbe de Coleman-Mazur est lisse en la plupart des points classiques (voir \cite{B-C}, \cite{D-B}, \cite{cho-vatsal}, \cite{coleman-mazur} et \cite{kisin}) bien qu'il existe des points associés à des formes propres classiques et irrégulières en $p$ pour lesquels la courbe de Hecke n'est pas lisse \cite{dimitrov-ghate}.

\subsection*{Séries thêta de poids $1$}

On note respectivement $\go$ et $\cO$ les anneaux des entiers de $F$ et de $E$. Soient $M$ une extension quadratique de $F$, $\sigma \in \Gal(\bar{\Q}/F)$ non trivial sur $M$ et $\mathbb{F}$ le corps résiduel de $\cO$. 

On fixe deux plongements $E\hookrightarrow \bar{\Q}_p$, $\iota_p : \bar{\Q} \hookrightarrow \bar{\Q}_p$ et une clôture algébrique $\bar{\Q} \subset  \C$ de $\Q$. On note $c$ la conjugaison complexe du groupe de Galois absolu $G_{\Q}$ et $\varepsilon_M$ le caractère non trivial de l'extension quadratique $M/F$ et $\Delta$ le groupe de Galois $\Gal(M/F)$.

Soient $\psi: G_M \rightarrow \cO^{\times}$ un caractère d'ordre fini, $\psi^{\sigma}$ le caractère $G_M$ définie par $\psi^{\sigma}(g)=\psi(\sigma^{-1}g\sigma)$, $\psim$  le caractère $\psi/\psi^{\sigma}$ et $\mathfrak{b}$ le conducteur de $\psi$. Ainsi, on peut voir $\psi$ comme un caractère sur le groupe des idèles $\psi: M^\times \backslash M^\times_{\mathbb{A}} \rightarrow \cO^\times$ (où $M^\times_{\mathbb{A}}$ est le groupe des idèles de $M$).

On suppose que pour toute place $\tau$ de $F$ qui reste réelle sur $M$, $\psi$ est trivial sur l'une des places de $M$ au dessus de $\tau$ et non trivial sur l'autre. Soit $\gn$ la partie de $\mathrm{N}_{M/F}(\mathfrak{b}).\Delta_{M/F}$ qui est première à $p$. D'après un théorème de Weil, il existe une série de thêta $\theta(\psi)$ de niveau modéré $\mathfrak{n}$ telle que $L(s,\theta(\psi))=L(s,\psi)$ (i.e $\rho=\Ind^{F}_M\psi$ est la représentation $p$-adique associée à $\theta(\psi)$). Les systèmes de valeurs propres d'une $p$-stabilisation $ f $ de $\theta(\psi)$ correspondent à un point $ x \in \cE (\bar \Q_p) $, de plus $ x $ se trouve sur lieu quasi-ordinaire $ \cE^{n.ord} $ de $ \cE $.

\section{D\'eformations galoisiennes}

On introduit dans cette section trois problèmes de déformations de $\rho$ qu'on notera $\cD$, $\cD^{ord}$ et $\cD^{ord}_{\det \rho}$ et on déterminera leurs espaces tangents respectifs dans la section \ref{M is Real}.
\subsection{Probl\`emes de d\'eformations}\label{Tangent space}\

L'image projective de $\rho$ est diédrale et contient un élément d'ordre $2$ qu'on notera $\sigma$. Soit $(e_1, e_2)$ une base de $\bar{\Q}_p^2$ dans laquelle $\rho_{|G_M} =\psi \oplus {\psi}^\sigma$. Après une renormalisation de $(e_1,e_2)$, on peut supposer que $\rho(\sigma)=\left(
\begin{smallmatrix}
0&1\\
1&0\end{smallmatrix}\right)$ dans $\mathrm{PGL}_{2}(\overline{\Q})$.

Le choix du groupe de décomposition en chaque place $\gp_i$ de $F$ au dessus de $p$ induit une place première canonique $v_i$ de $M$ parmi les places premières de $M$ au dessus de $\gp_i$ et nous permet de voir $G_{M_{v_i}} \subset G_{F_{\gp_i}}$ comme un sous-groupe de décomposition de $G_M$ en $v_i$. Puisque $f$ est $p$-régulière et $\rho$ d'image finie, $\rho$ est {\it ordinaire} en chaque premier $\gp_i$ de $F$ au dessus $p$ dans le sens où la restriction de $\rho$ à  $G_{F_{\gp_i}}$ est l'extension d'un caractère non ramifié $\psi_{i}''$ par un caractère $\psi_{i}'$ tel que $\psi_{i}''(\Frob_{\gp_i})=\alpha_i$, où $\alpha_i$ est la valeur propre de $f$ pour $U_{\gp_i}$. Maintenant, on choisit une base $\{e'_{i,1},e'_{i,2}\}$ de $\bar\Q_p^2$ dans laquelle $\rho_{|G_{F_{\gp_i}}}=\psi_{i}' \oplus  \psi_{i}''$. Si les deux caractères $\psi_{i}'$ et $\psi_{i}''$ sont non ramifiés, on privilégiera $\psi_{i}''$, de sorte que la base $\{e'_{i,1},e'_{i,2}\}$ soit unique à un scalaire près. Dans le cas où $\gp_i$ se décompose dans $M$, on a $(e'_{i,1},e'_{i,2})=(e_1,e_2)$ où $\psi^{\sigma}(\Frob_{\gp_i})=\alpha_{\gp_i}$, sinon $\psi(\Frob_{\gp_i})=\alpha_{\gp_i}$ et $(e'_{i,2},e'_{i,1})=(e_1,e_2)$.

Soit $\mathfrak{C}$ la catégorie dont les objets sont les $\bar{\Q}_p$-algèbre noethériennes, locales, complètes par rapport à l'idéal maximal, de corps résiduel isomorphe à $\bar{\Q}_p$ et dont les morphismes sont les homomorphismes de $\bar{\Q}_p$-algèbres locales. Soient $A$ un anneau dans la catégorie $\mathfrak{C}$ et $\rho_A :G_{F}\rightarrow \GL_{2}(A)$ une déformation de $\rho$, on dit que $\rho_A$ est quasi-ordinaire (resp. ordinaire) en $p$ si, et seulement si, pour tout $\gp_i \mid p$, on a $ (\rho_A)_{|G_{F_{\gp_i}}} \simeq \left(
\begin{smallmatrix}
\psi'_{i,A}&*\\
0 &\psi''_{i,A}\end{smallmatrix}\right) $, où $\psi''_{i,A}$ est un caractère (resp. un caractère non ramifié) qui relève $\psi_{i}''$. On considère le foncteur de déformation $\mathcal{D}^{ord}: \mathfrak{C} \rightarrow \mathrm{SETS}$ (resp. $\cD$), donné par les classes d'équivalences strictes des déformations de $\rho$ qui sont ordinaires en $p$ (resp. quasi-ordinaire en $p$). Puisque la représentation $\rho=\Ind^F_M \psi$ est absolument irréductible, $p$-ordinaire et $p$-regulière, les critères de Schlesinger impliquent que $\mathcal{D}^{ord}$ (resp. $\cD$) est représentable par le $2$-uplet $(\cR^{ord},\rho_{\cR^{ord}})$ (resp. $(\cR,\rho_{\cR})$), où $\rho_{\cR^{ord}}$ (resp. $\rho_{\cR}$) est l'anneau de déformation universel $p$-ordinaire  (resp. quasi-ordinaire en $p$) de $\rho$.  Soit $\cD^{ord}_{\det \rho}$ le sous-foncteur de $\cD^{ord} $ qui consiste en les deformations de determinant fixé.

\subsection{Espaces tangents}\

On note $t_{\cD^{ord}}$ (resp. $t_{\cD}$) l'espace tangent de $\cD^{ord}$ (resp. $\cD$) et $t^{0}_{\cD^{ord}}$ l'espace tangent de $\cD^{ord}_{\det \rho}$.

Il existe une décomposition $$(\ad \rho)_{|G_{F_{\gp_i}}} =  \psi_{i}'/ \psi_{i}' \oplus \psi_{i}'/ \psi_{i}'' \oplus  \psi_{i}'' /  \psi_{i}' \oplus  \psi_{i}''/ \psi_{i}''$$

Le choix de la base $(e'_{i,1},e'_{i,2})$ de $M_{\bar{\Q}_p}$ identifie $\End_{\bar{\Q}_p}(M_{\bar{\Q}_p})$ avec $M_2(\bar{\Q}_p)$; puisque $\rho$ est $p$-ordinaire pour tout premier $\gp_i$ de $F$, on a une application naturelle $G_{F_{\gp_i}}$-équivariante:

\begin{equation} \label{locp}
\begin{split}
  \ad \rho & \overset{C'_{i,*},D'_{i,*} }{\longrightarrow} \psi_{i}''/\psi_{i}' \oplus \psi_{i}''/\psi_{i}'' \\
 \left(\begin{smallmatrix}a' & b' \\ c' & d' \end{smallmatrix}\right) &  \mapsto (c',d')   
\end{split}
\end{equation}

Par un argument standard de la théorie de la déformation, on a le résultat suivant (voir \cite[Lemme 2.3]{D-B}).
 
\begin{lemma} \label{lemmatd}\
\begin{enumerate}
  
 \item $t_{\cD^{ord}}= \ker \left( \rH^1(F,\ad \rho) \overset{(C^{*}_i,D^{*}_i)}{\rightarrow}  \prod_{\gp_i \mid p} (\rH^1(F_{\gp_i},\psi_{i}''/\psi_{i}') \oplus \rH^1(I_{\gp_i}, \psi_{i}''/\psi_{i}'')) \right)$

 \item $t^{0}_{\cD^{ord}}= \ker \left( \rH^1(F,\ad^0 \rho) \overset{(C^{*}_i,D^{*}_i)}{\rightarrow} \prod_{\gp_i \mid p} (\rH^1(F_{\gp_i},\psi_{i}''/\psi_{i}') \oplus \rH^1(I_{\gp_i}, \psi_{i}''/\psi_{i}'')) \right)$ 

 \item $t_{\cD}= \ker \left( \rH^1(F,\ad \rho) \overset{C^{*}_i}{\rightarrow}  \prod_{\gp_i \mid p}( \rH^1(F_{\gp_i},\psi_{i}''/\psi_{i}')) \right)$
 
 \end{enumerate} 
 
\end{lemma}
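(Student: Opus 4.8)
The plan is to run the standard dictionary between first-order deformations and Galois cohomology, and then to read off each local condition at the primes above $p$ as the vanishing of an explicit cohomology class obtained from the projection \eqref{locp}. First I would recall that any deformation of $\rho$ to the dual numbers $\bar{\Q}_p[\epsilon]$ has the form $\rho_\epsilon = (1+\epsilon c)\rho$ with $c : G_F \to \ad \rho$ a $1$-cocycle, and that strict equivalence (conjugation by $1+\epsilon N$, $N \in \ad \rho$) modifies $c$ by the coboundary $\partial N$. This identifies the tangent space of the unrestricted functor with $\rH^1(F,\ad \rho)$, and, after imposing $\det \rho_\epsilon = \det \rho$, the tangent space of the fixed-determinant functor with $\rH^1(F,\ad^0 \rho)$. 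It then remains to translate the quasi-ordinary and ordinary conditions place by place.

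For (iii), I would fix $\gp_i \mid p$ and work in the basis $(e'_{i,1},e'_{i,2})$ in which $\rho_{|G_{F_{\gp_i}}} = \psi_i' \oplus \psi_i''$, so that $c|_{G_{F_{\gp_i}}}$ decomposes along $(\ad \rho)_{|G_{F_{\gp_i}}} = \psi_i'/\psi_i' \oplus \psi_i'/\psi_i'' \oplus \psi_i''/\psi_i' \oplus \psi_i''/\psi_i''$ into entries $(a',b',c',d')$. The deformation $\rho_\epsilon$ is quasi-ordinary at $\gp_i$ exactly when $(\rho_\epsilon)_{|G_{F_{\gp_i}}}$ is conjugate, by some $1+\epsilon N$, to an upper-triangular representation; since such a conjugation alters the lower-left entry $c'$ precisely by a coboundary, this holds if and only if the class $C^*_i(c) := [\,c'|_{G_{F_{\gp_i}}}\,] \in \rH^1(F_{\gp_i},\psi_i''/\psi_i')$ vanishes. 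This $C^*_i$ is nothing but the map induced on cohomology by restriction to $G_{F_{\gp_i}}$ followed by the equivariant projection \eqref{locp} onto the $\psi_i''/\psi_i'$-component, which gives (iii).

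For (i), I would superimpose the unramifiedness requirement on $\psi''_{i,A}$. The lower-right entry $d'$ takes values in the trivial module $\psi_i''/\psi_i''$, hence is an honest homomorphism $G_{F_{\gp_i}} \to \bar{\Q}_p$ describing the first-order deformation $\psi_i''(1+\epsilon d')$ of $\psi_i''$; this deformation is unramified if and only if $d'$ kills inertia, i.e. $D^*_i(c) := d'|_{I_{\gp_i}} = 0$ in $\rH^1(I_{\gp_i},\psi_i''/\psi_i'')$. Intersecting this with the quasi-ordinary condition yields the kernel in (i), and carrying out the same construction with $\ad^0 \rho$ in place of $\ad \rho$ (trace-zero cocycles, corresponding to fixed determinant) gives (ii).

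The hard part will be the clean equivalence underlying (iii) between \emph{strict upper-triangularizability over $\bar{\Q}_p[\epsilon]$} and \emph{the lower-left class being a coboundary}. Here the hypothesis that $f$ is $p$-regular ($\psi_i' \ne \psi_i''$ for every $\gp_i \mid p$) is essential: it makes the eigenline lifting $e'_{i,2}$ (respectively the privileged unramified character $\psi_i''$) unique up to scalar, so that the decomposition \eqref{locp} is canonical and the obstruction class is unambiguously isolated. I expect the only point requiring genuine care to be the bookkeeping of coboundaries against the chosen conjugating matrix $N$ — namely checking that killing $c'$ by $\partial N$ does not re-introduce ramification into $d'$, so that the quasi-ordinary and ordinary conditions can be imposed simultaneously as a single kernel; everything else is formal.
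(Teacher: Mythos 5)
Your proof is correct and coincides with the paper's approach: the paper gives no details, simply invoking the standard deformation-theory argument and citing \cite[Lemme 2.3]{D-B}, and what you write out (the cocycle dictionary, restriction followed by the equivariant projection (\ref{locp}), vanishing of the lower-left class for quasi-ordinarity, vanishing of $d'$ on inertia for ordinarity, trace-zero cocycles for fixed determinant) is precisely that argument. The bookkeeping concern you flag at the end resolves immediately: since $\rho_{|G_{F_{\gp_i}}}$ is diagonal in the basis $(e'_{i,1},e'_{i,2})$, the $(2,2)$-entry of any local coboundary $\partial N$ vanishes identically, so conjugation never alters $d'_{|I_{\gp_i}}$ and the two local conditions can indeed be imposed as a single kernel.
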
   

\subsection{La suite exacte inflation-restriction appliquée à $t_\cD$}\

On note $H\subset \bar\Q$ le corps de nombres totalement complexe fixé par $\ker(\ad \rho)$. Le groupe $G'=\Gal(H/F)$ est naturellement isomorphe à l'image projective de $\rho$ qui est un groupe diédral. Pour tout $\gp_i$ de $F$ au dessus de $p$, on note $w_i$ la place canonique de $H$ au dessus de $\gp_i$ induite par $\iota_p$.

\begin{lemma}\cite[Lemme 2.4]{D-B} \label{Rest-Inflation}
Soient $L$ un corps de nombres et $\rho$ une représentation d'image finie de $G_{L}$ dans un $\bar{\Q}_p$-espace vectoriel de dimension finie.
\begin{enumerate}
\item Soit $\Sigma$ un ensemble de places de $L$ et supposons que pour tout $v \in \Sigma $ il existe un quotient $\rho_v$ de $\rho_{|G_{L_v}}$. Soit $H$ un extension galoisienne finie de $L$, pour tout $v\in \Sigma $, on choisit une place $w(v)$ de $H$ au dessus de $v$. Alors 
$$\ker \left( \rH^1(L,\rho) \rightarrow \prod_{v\in \Sigma} \rH^1(L_{v},\rho_v)
 \right) \simeq \ker \left(\rH^1(H,\rho)^{\Gal(H/L)} \rightarrow \prod_{v\in \Sigma} \rH^1(H_{w(v)},\rho_v) \right).$$
 
\item Le morphisme naturel $$\rH^1(L, \rho)  \rightarrow \prod_{v \mid p} \rH^1(I_v,\rho)$$ est injectif.
 \end{enumerate}
 \end{lemma}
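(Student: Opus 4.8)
The plan is to reduce both statements to two soft facts. First, if $G$ is a finite group and $W$ a $\bar{\Q}_p[G]$-module that is finite dimensional over $\bar{\Q}_p$, then $\rH^i(G,W)=0$ for every $i\geq 1$, since $|G|$ is invertible in $\bar{\Q}_p$. Fed through the inflation-restriction sequence, this shows that for every finite Galois extension $H/L$ and every such module $W$ the restriction $\rH^1(L,W)\to\rH^1(H,W)^{\Gal(H/L)}$ is an isomorphism, and that each local restriction map $\rH^1(L_v,\rho_v)\to\rH^1(H_{w(v)},\rho_v)$ is injective (same vanishing applied to the finite group $\Gal(H_{w(v)}/L_v)$ acting on the $\bar{\Q}_p$-vector space $\rho_v$). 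Second, a continuous homomorphism into a torsion-free group annihilates every finite subgroup of its domain; this is what will make finite-image phenomena collapse once the coefficients form a $\bar{\Q}_p$-vector space.

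For (i) I would assemble the commutative square whose top arrow is $\rH^1(L,\rho)\to\prod_{v\in\Sigma}\rH^1(L_v,\rho_v)$, whose bottom arrow is $\rH^1(H,\rho)^{\Gal(H/L)}\to\prod_{v\in\Sigma}\rH^1(H_{w(v)},\rho_v)$, whose left vertical is the global restriction isomorphism above, and whose right verticals are the injective local restriction maps. Commutativity is just the equality of the two ways of restricting a global class to $H_{w(v)}$ and then projecting onto $\rho_v$. A class $c$ lies in the kernel of the top arrow iff $\mathrm{res}_v(c)=0$ for all $v$; by injectivity of the right verticals this is equivalent to the vanishing of the further restriction to $H_{w(v)}$, which by commutativity is precisely the condition that the image of $c$ under the left isomorphism lie in the kernel of the bottom arrow. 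Hence the left isomorphism identifies the two kernels, which is the claim.

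For (ii), the \emph{main arithmetic point}, I would let $N=\bar{L}^{\ker\rho}$ be the finite Galois extension cut out by $\rho$, so that $\rho|_{G_N}$ is trivial and $\rho$ factors through $\Gal(N/L)$. The global restriction isomorphism identifies $\rH^1(L,\rho)$ with $\Hom_{\Gal(N/L)}(G_N^{ab},V)\subseteq\Hom(G_N^{ab},V)$, where $V$ is the underlying space with trivial $G_N$-action; write $\phi$ for the homomorphism attached to a class $c$. If $c$ restricts to $0$ on $I_v$ for every $v\mid p$, then a fortiori $\phi$ is unramified at every place of $N$ above $p$ (the inertia subgroups of $G_N$ over $p$ being contained in the $I_v$, and $\phi$ being $\Gal(N/L)$-equivariant so that all places over a given $v$ behave alike). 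On the other hand $\phi$ is automatically unramified at every finite place $w\nmid p$ of $N$: since $\phi$ factors through $G_N^{ab}$, its restriction to inertia factors through abelianized inertia, which by local class field theory is $\cong\cO_{N_w}^{\times}$, a group whose pro-$p$ part is finite, and a homomorphism into the torsion-free $V$ kills this finite group. Therefore $\phi$ is unramified at all finite places, hence factors through the Galois group of the maximal everywhere-unramified abelian extension of $N$ (the narrow Hilbert class field), which is finite; as $V$ is torsion-free, $\phi=0$ and so $c=0$.

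The only genuinely arithmetic input, and the step I expect to be the crux, is the last paragraph: the combination of torsion-free coefficients, which forces automatic tame-unramifiedness away from $p$, with the finiteness of the narrow class group, which forces an everywhere-unramified additive character to vanish. Everything else is formal inflation-restriction together with the vanishing of positive-degree cohomology of finite groups in characteristic zero. I would double-check only the bookkeeping of decomposition and inertia groups under the chosen places $w(v)$, and the identification of abelianized local inertia with $\cO_{N_w}^{\times}$ via local class field theory.
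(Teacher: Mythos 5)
Votre preuve est correcte. Le texte ne démontre pas ce lemme — il le cite de Bellaïche--Dimitrov \cite[Lemme 2.4]{D-B} — et votre argument (inflation--restriction et annulation de la cohomologie des groupes finis en caractéristique zéro pour (i) ; puis, pour (ii), réduction au corps $N$ découpé par $\rho$, théorie du corps de classes locale qui montre que tout homomorphisme continu de $\cO_{N_w}^{\times}$, $w \nmid p$, vers un $\bar{\Q}_p$-espace vectoriel est trivial, et finitude du groupe des classes) est exactement l'argument standard utilisé dans la référence citée.
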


En utilisant la base $\{e_1,e_2\}$ définie ci-dessus, on peut voir les éléments de $\rH^1(F,\ad \rho)$ comme des $1$-co-cycles
\begin{equation}
\begin{split}
G_{F}\rightarrow M_2(\bar{\Q}_p) \\ 
\enspace g\mapsto \left(\begin{smallmatrix}a(g) & b(g) \\ c(g) & d(g) \end{smallmatrix}\right)
\end{split}
\end{equation}

Puisque $\rho$ est irréductible, $\psi^\sigma \neq \psi$ et $\ker(\psi^\sigma/\psi)$ définit une extension cyclique $H$ de $M$. Ainsi, on a la décomposition suivante $\bar{\Q}_p[G_F]$-modules:

\begin{equation}\label{13}
\ad\rho\simeq1 \oplus \ad^0\rho\simeq 1 \oplus\ad^0( \Ind_M^{F} \psi)\simeq 
1\oplus \epsilon_M \oplus
\Ind_M^{F} (\psim),
\end{equation}

Par inflation-restriction, on a aussi
$$\rH^1(F,\ad \rho)=\rH^1(M,\ad \rho)^{\Gal(M/F)}.$$
 
D'autre part, on a la décomposition suivante 

\begin{equation}\label{adbc}
\begin{split}
\rH^1(M,\ad \rho)& \simeq \rH^1(M,\psi/\psi)  \oplus \rH^1(M,\psi_{_{\heartsuit}})\oplus  \rH^1(M,\psi_{_{\heartsuit}}^{-1}) \oplus \rH^1(M,\psi^{\sigma}/ \psi^{\sigma}), \\
\left(\begin{smallmatrix}a & b \\ c & d \end{smallmatrix}\right)& \mapsto (a,b,c,d),
\end{split}
\end{equation}

où l'action du groupe $\Gal(M/F)$ échange $a$ et $d$ et aussi $b$ et $c$.

En combinant le lemme \ref{lemmatd} et le lemme \ref{Rest-Inflation}, on obtient: 

\begin{cor}\label{in tg} Dans la base $(e_1,e_2)$, on a:

\begin{enumerate}

\item 
${ \small t_{\cD^{ord}}\simeq \ker \left(\rH^1(M,\ad \rho)^{\Gal(M/F)} \overset{(C^*_{i},D^*_{i})}{\rightarrow}  \underset{ \gp_i \mid p}{\prod} \rH^1(M_{v_i},\psi_{i}''/\psi_{i}') \underset{ \gp_i \mid p}{\prod} \rH^1(I_{v_i}, \psi_{i}''/\psi_{i}'') \right)} $

\item Si $\left(\begin{smallmatrix} a & b \\ c & d \end{smallmatrix}\right)\in t_{\cD} \subset \rH^1(M,\ad \rho)^{\Gal(M/F)}$, alors $a=d^{\sigma}$ et $b=c^{\sigma}$.

\end{enumerate}

\end{cor}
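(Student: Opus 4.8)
The plan is to combine the two ingredients the corollary is built from — Lemme \ref{lemmatd} (which describes the tangent spaces as kernels of local restriction maps) and Lemme \ref{Rest-Inflation} (inflation-restriction) — and then read off the structure imposed by the Galois action of $\Delta = \Gal(M/F)$ on the decomposition (\ref{adbc}). Part (i) should be essentially a formal rewriting, while part (ii) requires extracting the explicit symmetry condition on the matrix entries $(a,b,c,d)$.

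\medskip

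First I would prove (i). Start from the description in Lemme \ref{lemmatd}(i):
$$t_{\cD^{ord}}= \ker \left( \rH^1(F,\ad \rho) \overset{(C^{*}_i,D^{*}_i)}{\rightarrow}  \prod_{\gp_i \mid p} \left(\rH^1(F_{\gp_i},\psi_{i}''/\psi_{i}') \oplus \rH^1(I_{\gp_i}, \psi_{i}''/\psi_{i}'')\right) \right).$$
To replace the source by $\rH^1(M,\ad\rho)^{\Gal(M/F)}$, I would invoke the standard inflation-restriction isomorphism $\rH^1(F,\ad\rho)\simeq\rH^1(M,\ad\rho)^{\Gal(M/F)}$ stated just above the corollary (valid because $\Gal(M/F)$ has order $2$ prime to $p$, so the higher cohomology of $\Delta$ with $\bar\Q_p$-coefficients vanishes). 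To replace each local target, I would apply Lemme \ref{Rest-Inflation}(i) with $L=F$, the extension $M/F$, the set $\Sigma=\{\gp_i\mid p\}$, and the chosen places $v_i$ of $M$ above $\gp_i$: this identifies $\rH^1(F_{\gp_i},\psi_i''/\psi_i')$ with the restriction to $M_{v_i}$ (the decomposition group $G_{M_{v_i}}\subset G_{F_{\gp_i}}$ was fixed in the construction). For the second factor I would similarly pass from $\rH^1(I_{\gp_i},\psi_i''/\psi_i'')$ to $\rH^1(I_{v_i},\psi_i''/\psi_i'')$, using that $v_i$ is the canonical place above $\gp_i$; the compatibility of the local restriction maps with the $C^*_i,D^*_i$ of (\ref{locp}) is immediate since these were defined as $G_{F_{\gp_i}}$-equivariant projections. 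Assembling these identifications yields the displayed formula for $t_{\cD^{ord}}$.

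\medskip

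Next I would treat (ii). By Lemme \ref{lemmatd}(iii), an element of $t_{\cD}$ is a class in $\rH^1(F,\ad\rho)=\rH^1(M,\ad\rho)^{\Gal(M/F)}$; the point is purely to spell out what the $\Delta$-invariance means on the coordinates of (\ref{adbc}). The decomposition (\ref{adbc}) writes a cocycle valued in $\ad\rho$ as a quadruple $(a,b,c,d)$ according to the eigenbasis $(e_1,e_2)$ in which $\rho_{|G_M}=\psi\oplus\psi^\sigma$, and the remark following (\ref{adbc}) records that the nontrivial element of $\Gal(M/F)$ — represented by $\rho(\sigma)=\left(\begin{smallmatrix}0&1\\1&0\end{smallmatrix}\right)$, chosen at the start of \S\ref{Tangent space} — interchanges the diagonal entries $a\leftrightarrow d$ and the off-diagonal entries $b\leftrightarrow c$. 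Conjugation by $\left(\begin{smallmatrix}0&1\\1&0\end{smallmatrix}\right)$ swaps rows and columns of $\left(\begin{smallmatrix}a&b\\c&d\end{smallmatrix}\right)$, sending it to $\left(\begin{smallmatrix}d&c\\b&a\end{smallmatrix}\right)$, while the action on the cocycle variable replaces $g$ by $\sigma^{-1}g\sigma$, i.e. applies the superscript $\sigma$ to each entry. Imposing $\Gal(M/F)$-invariance therefore forces $a=d^\sigma$ and $b=c^\sigma$, which is exactly the asserted relation.

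\medskip

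\emph{The main obstacle} is not conceptual but bookkeeping: I must verify that the identifications of source and target in Lemme \ref{Rest-Inflation}(i) are compatible with the specific local projection maps $(C^*_i,D^*_i)$, so that the kernel is genuinely preserved and not merely abstractly isomorphic. This requires checking that the canonical place $v_i$ of $M$ above $\gp_i$ and the decomposition group $G_{M_{v_i}}\subset G_{F_{\gp_i}}$ fixed in \S\ref{Tangent space} are precisely the data making the restriction square commute — in particular that in the split case $(e'_{i,1},e'_{i,2})=(e_1,e_2)$ and in the inert/ramified case the basis is swapped, so that the projection onto $\psi_i''/\psi_i'$ matches the coordinate picked out by (\ref{adbc}). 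Once this compatibility of bases and places is pinned down, both parts follow formally.
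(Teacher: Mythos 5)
Your proposal is correct and takes essentially the same route as the paper, which derives this corollary precisely by combining Lemme \ref{lemmatd} with Lemme \ref{Rest-Inflation} and the $\Gal(M/F)$-action on the decomposition (\ref{adbc}) recorded just before the statement. One small imprecision: the vanishing of the higher cohomology of $\Delta=\Gal(M/F)$ that underlies inflation-restriction holds because the coefficients are $\bar{\Q}_p$-vector spaces (characteristic zero), not because $|\Delta|=2$ is prime to $p$ — that justification would fail for $p=2$, while the conclusion remains valid.
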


L'action naturelle à gauche de $\Gal(H/F)$ sur $G_H$ induit une action à droite donnée par $x\rightarrow g^{-1}(x)$ pour laquelle $\rH^{1}(H,\bar{\Q}_p)$ est un $\bar{\Q}_p[\Gal(H/F)]$-module à gauche.

On a $\rH^{1}(H,\ad \rho)=\rH^{1}(H,\bar{\Q}_p) \otimes_{\bar{\Q}_p} \ad \rho$ (puisque $\ad \rho(G_H)=1$). 

Un élément de $\rH^{1}(H,\bar{\Q}_p) \otimes_{\bar{\Q}_p} \ad \rho$ peut être écrit comme une matrice $\left(\begin{smallmatrix} a & b \\ c & d \end{smallmatrix}\right)$, où $a,b,c$ et $d$ sont des éléments de $\Hom(G_H, \bar{\Q}_p)$ et l'action naturelle à gauche de $\Gal(H/F)$ sur $\rH^{1}(H,\bar{\Q}_p) \otimes_{\bar{\Q}_p} \ad \rho$ est donnée par 

$$g.\left(\begin{matrix} a & b \\ c & d \end{matrix}\right)=\rho(g) \left(\begin{matrix} g.a & g.b \\ g.c & g.d \end{matrix}\right) \rho(g)^{-1}.$$

Ainsi, l'inflation restriction appliquée à l'extension galoisienne finie $H/F$ induit l'isomorphisme suivant: 
\begin{equation*}
\rH^1(F,\ad \rho)\simeq\left(\Hom(G_H, \bar{\Q}_p)\otimes \ad \rho\right)^{G'}.
\end{equation*}

\begin{cor} \label{in tg n,ord} Dans la base $(e_1,e_2)$, on a:

$ { \small t_{\cD} \simeq \ker \left(\left(\Hom(G_H, \bar{\Q}_p)\otimes \ad \rho\right)^{G'} \overset{(C^*_{i})}{\rightarrow} \underset{w_i, \gp_i \mid p}{\prod} \rH^1(H_{w_i}, \bar{\Q}_p)) \right)}$

\end{cor}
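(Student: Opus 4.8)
The plan is to derive this identification directly from Lemma~\ref{lemmatd}(iii), transporting both the ambient cohomology group and the local conditions at $p$ up to the splitting field $H$ of $\ad\rho$, by combining the inflation-restriction isomorphism $\rH^1(F,\ad\rho)\simeq(\Hom(G_H,\bar{\Q}_p)\otimes\ad\rho)^{G'}$ recalled just above with Lemma~\ref{Rest-Inflation}(i). First I would recall from Lemma~\ref{lemmatd}(iii) that $t_{\cD}$ is the kernel of the map $\rH^1(F,\ad\rho)\to\prod_{\gp_i\mid p}\rH^1(F_{\gp_i},\psi_{i}''/\psi_{i}')$ obtained by restricting a class to each decomposition group $G_{F_{\gp_i}}$ and projecting $\ad\rho$ onto its $G_{F_{\gp_i}}$-equivariant quotient $\psi_{i}''/\psi_{i}'$ through the map $C'_{i,*}$ of~(\ref{locp}). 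I would then invoke Lemma~\ref{Rest-Inflation}(i) with $L=F$, with $\Sigma$ the set of primes $\gp_i\mid p$, with the quotient $\rho_{\gp_i}=\psi_{i}''/\psi_{i}'$ of $(\ad\rho)_{|G_{F_{\gp_i}}}$, and with the finite Galois extension $H/F$, choosing for each $\gp_i$ the canonical place $w_i$ of $H$ above it. This yields
\[
t_{\cD}\simeq\ker\left(\rH^1(H,\ad\rho)^{G'}\overset{(C^*_{i})}{\longrightarrow}\prod_{\gp_i\mid p}\rH^1(H_{w_i},\psi_{i}''/\psi_{i}')\right).
\]

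Next I would simplify both sides using that $\ad\rho$ is trivial on $G_H$ by the very definition of $H$. On the source this gives $\rH^1(H,\ad\rho)=\Hom(G_H,\bar{\Q}_p)\otimes\ad\rho$, so that its $G'$-invariants are precisely $(\Hom(G_H,\bar{\Q}_p)\otimes\ad\rho)^{G'}$; on the target, since each character $\psi_{i}''/\psi_{i}'$ is a constituent of $\ad\rho$ it is likewise trivial on $G_{H_{w_i}}$, whence $\rH^1(H_{w_i},\psi_{i}''/\psi_{i}')=\rH^1(H_{w_i},\bar{\Q}_p)$. Substituting these two simplifications into the previous display produces exactly the asserted isomorphism.

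The only delicate point is bookkeeping rather than substance: I must check that the local map labelled $C^*_i$ here, read off in the basis $(e_1,e_2)$, matches the one of Lemma~\ref{lemmatd}, defined in the basis $(e'_{i,1},e'_{i,2})$ in which $\rho_{|G_{F_{\gp_i}}}$ is diagonal. Writing an invariant class as a matrix $\left(\begin{smallmatrix}a&b\\c&d\end{smallmatrix}\right)$ with $a,b,c,d\in\Hom(G_H,\bar{\Q}_p)$, the map $C^*_i$ is the composite of conjugation by the change-of-basis matrix from $(e_1,e_2)$ to $(e'_{i,1},e'_{i,2})$ followed by extraction of the lower-left entry. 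Because $\ad\rho$ becomes trivial after restriction to $G_{H_{w_i}}$, this conjugation contributes no cohomological twist and merely expresses the relevant entry as a fixed $\bar{\Q}_p$-linear combination of $a,b,c,d$ viewed in $\Hom(G_{H_{w_i}},\bar{\Q}_p)$; the two descriptions of the local map therefore agree after this explicit linear identification, and composing the two kernel isomorphisms completes the proof.
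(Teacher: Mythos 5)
Your proposal is correct and follows exactly the route the paper itself takes: its proof of Corollaire~\ref{in tg n,ord} is the one-line statement that the result follows from Lemme~\ref{lemmatd} and Lemme~\ref{Rest-Inflation}, which is precisely your combination of Lemme~\ref{lemmatd}(iii) with Lemme~\ref{Rest-Inflation}(i) applied to $L=F$, $\Sigma=\{\gp_i\mid p\}$ and the quotients $\psi_i''/\psi_i'$, together with the triviality of $\ad\rho$ on $G_H$. Your additional care about identifying the local map $C^*_i$ in the basis $(e_1,e_2)$ versus $(e'_{i,1},e'_{i,2})$ is a detail the paper leaves implicit (and handles explicitly only later, around~(\ref{locp}) and~(\ref{non-split})), and your justification of it is sound.
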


\begin{proof}
Le résultat découle des lemmes \ref{lemmatd} et \ref{Rest-Inflation}

\end{proof}

\section{Application de la th\'eorie des corps de classes}\label{local unit p-adic}\

\subsection{Rappel sur les unit\'es locales et globales}\

Si $G$ est un groupe fini, on notera $\widehat{G}$ l'ensemble des classes d'isomorphismes de représentations irréductibles de $G$ sur $ \bar{\Q}$.

Soient $\cO_H$ l'anneau des entiers de $H$, $M_p$ la $p$-extension abélienne maximale non ramifiée en dehors de $p$ de $H$ et $\cO_{H_{w}}$ l'anneau des entiers de la complétion $H$ par rapport à $w$, où $w$ est la place de $H$ au dessus de $p$. Soit $H''$ le $p$-Hilbert de $H$, alors la théorie du corps de classes implique que $$\Gal(M_p/H'')\simeq \text{la }p\text{-partie de } (\cO_{H}\otimes \Z_p)^\times/ \bar\cO_{H}^\times \simeq U^{1}_{p}/\bar\cO_{H}^\times,$$ où $U^1_{p}=\prod_{{w} \mid p} U^1_{w}$ et $U^{1}_{w}$ est le groupe des unités de $\cO_{H_w}$ qui sont $\equiv 1 \pmod p$. Ainsi, on a la suite exacte suivante:

\begin{equation}\label{global-local}
0\longrightarrow \Hom(G_H,\bar \Q_p) \longrightarrow  \Hom \left( (\cO_{H}\otimes \Z_p)^\times, \bar \Q_p\right)\longrightarrow \Hom \left(\cO_{H}^{\times},\bar{\Q}_p \right),
\end{equation}

telle que la première application est le dual de la réciprocité d'Artin et la seconde est la restriction à $\cO_{H}^{\times}$ par rapport à l'inclusion diagonale $\cO_{H}^{\times}\hookrightarrow \left(\cO_H \otimes \Z_p \right)^{\times}$.

Tout morphisme continu pour la topologie $p$-adique de $\Hom (\cO_{H,{w}}^\times, \bar \Q_p)$ est donné par 
\begin{equation}\label{log-loc}
u\mapsto 
\sum_{g_w\in J_w} h_{g_w} g_w(\log_p(u))=
\sum_{g_w\in J_w} h_{g_w} \log_p(g_w(u)),
\end{equation}
où $h_{g_w}$ est un élément de $\bar \Q_p$, $J_w$ est l'ensemble des plongements de $H_w$ dans $\bar{\Q}_p$, et $\log_p$ est logarithme $p$-adique définie sur $\bar \Q_p^\times$.

On note $J_H$ l'ensemble des plongements de $H$ dans $\bar{\Q}$. Les deux plongements $\iota_p:\bar\Q\hookrightarrow\bar\Q_p$ et $H\subset \bar\Q$ définissent une partition $J_H=\coprod_{w\mid p} J_w$ provenant du diagramme commutatif suivant
\begin{equation}\label{place H}
\xymatrix{ \bar\Q \ar@{^{(}->}^{\iota_p}[r] & \bar\Q_p \\
H\ar@{^{(}->}^{g}[u] \ar@{^{(}->}[r] & H_w. \ar@{^{(}->}^{g_w}[u] }\end{equation}

Dans l'introduction, on avait fixé des plongements complexes $g_1,...,g_n$ de $\bar{\Q}$ dans $\bar{\Q}$ qui relèvent les plongements de $F$ dans $\bar{\Q}$ et tels que $g_1$ est le plongement qui identifie $F$ avec un sous-corps de $\R$. Donc on obtient la partition suivante:

\begin{equation}\label{JH}
J_H=\{g_i\circ g | 1\leq i \leq n, g\in \Gal(H/F)\}=\coprod_{1\leq i \leq n} g_i.\Gal(H/F)
\end{equation}

Ainsi, les éléments définis par 

$\left(u\otimes 1 \mapsto \log_p \big(\iota_p \circ g_i \circ g(u) \big)\right)$ pour $1\leq i \leq n$ et $g\in \Gal(H/F)$ forment une base du $\bar{\Q}_p$-espace vectoriel $\Hom \left( (\cO_{H}\otimes \Z_p)^\times, \bar{\Q}_p \right)$. On a une action naturelle à gauche de $\Gal(H/F)$ sur $\Hom \left( (\cO_{H}\otimes \Z_p)^\times, \bar{\Q}_p \right)$ donnée par $$g'.\log_p \big(\iota_p \circ g_i \circ g \otimes 1) = \log_p \big(\iota_p \circ g_i \circ g'g \otimes 1).$$

Par conséquent, il existe un isomorphisme canonique de $G'=\Gal(H/F)$-modules à gauche: 

\begin{equation}\label{log_p}
\begin{split}
\underset{1\leq i \leq n}\oplus \bar{\Q}_p[G'] & \overset{\sim}{\longrightarrow}  \Hom \left( (\cO_{H}\otimes \Z_p)^\times, \bar \Q_p\right) \\
\left(\sum_{g\in G' }  a_{i,g}  g\right)_{1\leq i \leq n}  & \mapsto
\left(u\otimes 1 \mapsto \sum_{g \in G' \atop 1\leq i \leq n} a_{i,g} \log_p \big(\iota_p\circ g_i \circ g^{-1}(u) \big) \right).
\end{split}
\end{equation}

On a les conjugaisons complexes $\tau_1,..., \tau_n \in G_F$ telles que $\tau_i$ est la conjugaison complexe attachée à $g_i$ (i.e $\tau_i$ est conjuguée à la conjugaison complexe $c$ de $G_\Q$ par $g_{i}$ et $c=\tau_1$). Puisque $\rho$ est impaire, alors $\det \rho(\tau_i)=-1$ pour tout $1\leq i \leq n$.

Soit $\{\sigma^{i}_j\}_{1\leq j \leq n'}$ un ensemble de représentants de toutes les classes d'équivalence du quotient de l'ensemble $\{g_i\circ g | g\in \Gal(H/F), 1\leq i \leq n\}$ par la relation d'équivalence $$ c \circ h \sim h \text{, où $h \in g_i G'$}.$$ D'après la preuve de Minkowski du théorème des unités de Dirichlet, il existe un plongement $\cO_H^{\times}/ \mu \hookrightarrow \R^{nn'}$ donné par $a \rightarrow (\log|\sigma^{i}_j(a)|)_{1\leq j \leq n', 1\leq i \leq n}$, où $\mu$ est le sous-groupe de torsion de $\cO_H^{\times}$. 

D'autre part, on a $g_i(\tau_i(x))=\overline{(g_i(x))}$ et donc le $G'$-ensemble $\{\sigma^{i}_j\}_{1\leq j \leq n'}$ peut être vu comme la permutation des classes à droite du groupe $\Gal(H/F)$ suivant le sous-groupe $\{1,\tau_i\}$.

Ainsi, on obtient les décompositions de $G'$-modules: 

$$\Hom(\cO_{H}^{\times},\bar\Q_p) \simeq  (\bigoplus_{\pi \ne 1 \atop \pi \in \widehat{G'}} \pi^{ \alpha_\pi })\oplus 1^{n-1}  \text{ et  } \Hom \left( (\cO_{H}\otimes \Z_p)^\times, \bar \Q_p\right)   \simeq  \bigoplus_{\pi \in \widehat{G'}} \pi^{n\dim \pi}$$

où $\pi$ parcourt l'ensemble des caractères de $G'=\Gal(H/F)$, $\alpha_\pi= \sum\limits^{n}_{i=1} \dim \pi^{+\tau_i}$ et les $\pi^{+\tau_i}$, $\pi^{-\tau_i}$ sont respectivement les espaces propres associés à l'action de la conjugaison complexe $\tau_i \in G'$ pour les valeurs propres $+1$ et $-1$. 

D'autre part, on la décomposition de $G'$-modules: $$\Hom(G_H, \bar{\Q}_p)\simeq \bigoplus\limits_{\pi \in \widehat{G'}} \pi ^{m_\pi}.$$

En utilisant la suite exacte (\ref{global-local}), on obtient une borne de $m_{\pi}$:

\begin{lemma} \label{multiplicity-Mix} 
On a  $m_1 \geq 1$ et pour $\pi\neq 1$, on a $ \sum\limits^{n}_{i=1} \dim \pi^{-\tau_i} \leq  m_\pi$ avec égalité si la conjecture de Leopoldt est vraie pour $H$. 

\end{lemma}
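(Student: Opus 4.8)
Le plan est d'exploiter la suite exacte (\ref{global-local}) composante $\pi$-isotypique par composante $\pi$-isotypique. D'abord je v\'erifierais que les deux fl\`eches de (\ref{global-local}) sont $G'$-\'equivariantes : la premi\`ere est le dual de la r\'eciprocit\'e d'Artin et la seconde est induite par l'inclusion diagonale $\cO_H^\times\hookrightarrow(\cO_H\otimes\Z_p)^\times$, deux applications naturelles pour l'action de $\Gal(H/F)$. Comme $\bar\Q_p[G']$ est semi-simple, le foncteur $\Hom_{G'}(\pi,-)$ (le passage \`a la composante $\pi$-isotypique) est exact ; en l'appliquant \`a (\ref{global-local}) et en lisant les multiplicit\'es dans les trois d\'ecompositions de $G'$-modules obtenues ci-dessus, j'obtiens pour chaque $\pi\in\widehat{G'}$ une suite exacte \`a gauche d'espaces vectoriels sur $\bar\Q_p$.

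Pour $\pi\neq 1$ cette suite s'\'ecrit
\begin{equation*}
0\longrightarrow\bar\Q_p^{\,m_\pi}\longrightarrow\bar\Q_p^{\,n\dim\pi}\longrightarrow\bar\Q_p^{\,\alpha_\pi},
\end{equation*}
donc $m_\pi=n\dim\pi-r$, o\`u $r\leq\alpha_\pi$ est le rang de la seconde fl\`eche. Chaque $\tau_i$ \'etant une involution de $G'$, on a $\dim\pi=\dim\pi^{+\tau_i}+\dim\pi^{-\tau_i}$, d'o\`u $n\dim\pi-\alpha_\pi=\sum_{i=1}^n(\dim\pi-\dim\pi^{+\tau_i})=\sum_{i=1}^n\dim\pi^{-\tau_i}$, ce qui fournit la minoration $m_\pi\geq\sum_{i=1}^n\dim\pi^{-\tau_i}$. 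Pour $\pi=1$ la m\^eme suite devient $0\to\bar\Q_p^{\,m_1}\to\bar\Q_p^{\,n}\to\bar\Q_p^{\,n-1}$ (car la composante triviale de $\Hom(\cO_H^\times,\bar\Q_p)$ est $1^{n-1}$), et le noyau est de dimension au moins $n-(n-1)=1$, d'o\`u $m_1\geq 1$.

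Il reste \`a obtenir l'\'egalit\'e sous la conjecture de Leopoldt pour $H$. L'\'egalit\'e $m_\pi=\sum_{i=1}^n\dim\pi^{-\tau_i}$ ($\pi\neq 1$) \'equivaut \`a $r=\alpha_\pi$, c'est-\`a-dire \`a la surjectivit\'e de la seconde fl\`eche de (\ref{global-local}) sur la composante $\pi$-isotypique. Or la conjecture de Leopoldt pour $H$ affirme pr\'ecis\'ement que le r\'egulateur $p$-adique ne s'annule pas, i.e. que l'application $\cO_H^\times\otimes\Z_p\to(\cO_H\otimes\Z_p)^\times$ est injective modulo torsion ; par dualit\'e, la restriction $\Hom((\cO_H\otimes\Z_p)^\times,\bar\Q_p)\to\Hom(\cO_H^\times,\bar\Q_p)$ est alors surjective, donc surjective sur chaque composante isotypique par \'equivariance, ce qui force l'\'egalit\'e voulue. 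Le point d\'elicat, et principal obstacle, sera de justifier proprement cette \'equivalence entre Leopoldt et la surjectivit\'e de la restriction (ainsi que l'\'equivariance de (\ref{global-local})) ; une fois cela acquis, tout le reste n'est que de l'alg\`ebre lin\'eaire en situation semi-simple. On notera qu'un calcul global alternatif, pond\'erant les minorations par $\dim\pi$ et utilisant $\sum_\pi\dim\pi\cdot\chi_\pi(\tau_i)=0$ pour $\tau_i\neq 1$, redonne la m\^eme \'egalit\'e et montre de plus que $m_1=1$ sous Leopoldt.
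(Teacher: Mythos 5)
Your proof is correct and is precisely the argument the paper intends: the lemma is stated there without a written proof, as an immediate consequence of the left-exact sequence (\ref{global-local}) and of the isotypic decompositions of $\Hom\left((\cO_H\otimes\Z_p)^\times,\bar\Q_p\right)$ and $\Hom\left(\cO_H^\times,\bar\Q_p\right)$ recalled just before its statement. Your dimension count on each $\pi$-isotypic component (using semisimplicity and $n\dim\pi-\alpha_\pi=\sum_i\dim\pi^{-\tau_i}$), together with the identification of Leopoldt's conjecture for $H$ with the surjectivity of the restriction map to $\Hom\left(\cO_H^\times,\bar\Q_p\right)$ (non-vanishing of the $p$-adic regulator, $\bar\Q_p$ being divisible hence injective as a $\Z_p$-module), fills in exactly the steps the paper leaves implicit.
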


\subsection{Une minoration de $ \dim t^0_{\cD^{ord}}$}\label{M is mixt}\

L'inflation-restriction induit l'isomorphisme suivant

\begin{equation}\label{iso-inf}
\rH^1(M,\psi_{_{\heartsuit}}^{-1})\simeq \rH^1(H,\bar \Q_p)[\psi_{_{\heartsuit}}^{-1}]
\end{equation}
où $V[\psi_{_{\heartsuit}}^{-1}]$ est le $\psi_{_{\heartsuit}}^{-1}$-sous-espace propre de la $\Gal(H/M)$-représentation $V$.

\begin{prop} \label{dimension neither nor}

Supposons que $M$ a $2r$ plongements complexes et le caractère $\psi^2_{_{\heartsuit}}$ est non trivial. Alors:

 $$2[F:\Q]-r \leq \dim \rH^1(M,\psi_{_{\heartsuit}}^{-1}).$$

\end{prop}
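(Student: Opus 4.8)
\emph{Plan de preuve propos\'e.} Pour \'etablir cette minoration, le plan est de ramener le calcul de $\dim\rH^1(M,\psim^{-1})$ \`a celui d'une multiplicit\'e dans $\Hom(G_H,\bar\Q_p)$ vu comme $G'$-module, puis d'invoquer la minoration du Lemme \ref{multiplicity-Mix}. D'abord, l'isomorphisme (\ref{iso-inf}) identifie $\rH^1(M,\psim^{-1})$ avec le $\psim^{-1}$-sous-espace propre de $\rH^1(H,\bar\Q_p)=\Hom(G_H,\bar\Q_p)$ pour l'action de $\Gal(H/M)$. En \'ecrivant la d\'ecomposition $\Hom(G_H,\bar\Q_p)\simeq\bigoplus_{\pi\in\widehat{G'}}\pi^{m_\pi}$, il s'agit alors de rep\'erer les irr\'eductibles $\pi$ de $G'$ dont la restriction \`a $\Gal(H/M)$ contient $\psim^{-1}$.

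Le point-cl\'e est ici l'hypoth\`ese $\psim^2\neq 1$. Elle entra\^ine que $\psim^{-1}$ est un caract\`ere de $\Gal(H/M)$ d'ordre $>2$, qui ne peut donc figurer dans la restriction d'aucun caract\`ere de $G'$ (ceux-ci, factorisant par l'ab\'elianis\'e du groupe di\'edral $G'$, se restreignent \`a $\Gal(H/M)$ en caract\`eres d'ordre $\leq 2$); et comme $\psim^{\sigma}=\psim^{-1}\neq\psim$, la repr\'esentation $\pi_0:=\Ind_M^F\psim$ figurant dans (\ref{13}) est irr\'eductible de dimension $2$, de restriction $\psim\oplus\psim^{-1}$ \`a $\Gal(H/M)$. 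On v\'erifie ainsi que $\pi_0$ est l'unique irr\'eductible de $G'$ dont la restriction contient $\psim^{-1}$, et ce avec multiplicit\'e $1$. On en d\'eduit l'\'egalit\'e $\dim\rH^1(M,\psim^{-1})=m_{\pi_0}$, et le Lemme \ref{multiplicity-Mix} donne $m_{\pi_0}\geq\sum_{i=1}^{n}\dim\pi_0^{-\tau_i}$.

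Il reste \`a calculer les dimensions $\dim\pi_0^{-\tau_i}$. Comme $\rho$ est impaire, $\rho(\tau_i)$ est une involution de d\'eterminant $-1$, donc de valeurs propres $\{+1,-1\}$; par l'action adjointe, $\tau_i$ op\`ere sur $\ad\rho$ avec un $(-1)$-espace propre de dimension $2$, tandis que le facteur trivial de (\ref{13}) est fix\'e. En retranchant la contribution de $\epsilon_M$ dans la d\'ecomposition $\ad\rho\simeq 1\oplus\epsilon_M\oplus\pi_0$, on obtient $\dim\pi_0^{-\tau_i}=2-\dim\epsilon_M^{-\tau_i}$, qui vaut $2$ si $\epsilon_M(\tau_i)=1$ et $1$ si $\epsilon_M(\tau_i)=-1$. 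Or $\epsilon_M(\tau_i)=1$ exactement quand la place r\'eelle de $F$ attach\'ee \`a $\tau_i$ reste r\'eelle dans $M$, et $\epsilon_M(\tau_i)=-1$ quand elle devient complexe; puisque $M$ poss\`ede $2r$ plongements complexes, $r$ places r\'eelles de $F$ deviennent complexes et $n-r$ restent r\'eelles. La somme vaut donc $2(n-r)+r=2n-r$, d'o\`u $\dim\rH^1(M,\psim^{-1})\geq 2[F:\Q]-r$.

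Le principal obstacle me para\^it \^etre l'\'etape d'identification de $\pi_0$ comme unique contributeur au $\psim^{-1}$-espace propre, qui repose de fa\c{c}on essentielle sur $\psim^2\neq 1$ (sans quoi $\pi_0$ se scinde et des caract\`eres de $G'$ peuvent intervenir, modifiant le compte); le reste est une comptabilit\'e d'espaces propres de la conjugaison complexe, contr\^ol\'ee par l'imparit\'e de $\rho$ et par le comportement archim\'edien de $M/F$ encod\'e dans le nombre $r$.
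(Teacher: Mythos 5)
Votre preuve est correcte et suit essentiellement le même chemin que celle du texte : identification de $\rH^1(M,\psim^{-1})$ avec le $\psim^{-1}$-espace propre de $\Hom(G_H,\bar\Q_p)$ via (\ref{iso-inf}), réduction à la multiplicité $m_{\pi}$ de $\pi=\Ind_M^F\psim^{-1}$ (irréductible car $\psim^2\neq 1$), puis application du Lemme \ref{multiplicity-Mix} avec le compte $\sum_i \dim\pi^{-\tau_i}=2(n-r)+r$. Les seules différences sont de pure comptabilité : vous justifiez l'égalité $\dim\rH^1(M,\psim^{-1})=m_\pi$ par la théorie des représentations du groupe diédral là où le texte invoque la réciprocité de Frobenius, et vous calculez $\dim\pi^{-\tau_i}$ via la décomposition $\ad\rho\simeq 1\oplus\epsilon_M\oplus\pi$ et l'imparité de $\rho$, là où le texte évalue directement $\pi(\tau_i)$.
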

\begin{proof}  

Puisque le caractère $\psi^2_{_{\heartsuit}}$ est non trivial, $\pi=\Ind_M^{F}(\psi_{_{\heartsuit}}^{-1})$ est irréductible, donc : 
$$\dim \rH^1(H,\bar \Q_p)[\psi_{_{\heartsuit}}^{-1}]=
\dim \Hom_{G'}\left(\pi, \rH^1(H,\bar\Q_p)\right)=m_ \pi.$$

On note $(\tau_{i_k})_{1\leq k \leq r}$ (resp.$(\tau_{i'_{l}})$) les conjugaisons complexes de $G_F$ qui se prolongent en plongements complexes (resp. réels) de $M$ dans $\bar{\Q}$. On voit que $\det(\pi)(\tau_{i_{k}})=-1$, $\det(\pi) (\tau_{i'_{l}})=1$, $\dim \pi^{-\tau_{i_{k}}}=1$ et $\dim \pi^{-\tau_{i'_{l}}}=2$.

D'après le lemme \ref{multiplicity-Mix}, on a $ 2[F:\Q]-r \leq m_\pi$, et on conclut par l'isomorphisme $(\ref{iso-inf})$. 

\end{proof}

\begin{lemma}\label{iso loc}
Soit $\gp_i$ un premier de $F$ au dessus de $p$ qui se décompose dans $M$ en $v_i$ et $v_i^{\sigma}$, alors $\dim \rH^{1}(M_{v_i}, \psi_{_{\heartsuit}}^{-1})= e_i . f_i$.

\end{lemma}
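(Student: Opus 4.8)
The plan is to read the dimension off the local Euler characteristic formula, once the two flanking cohomology groups are shown to vanish. Since $\gp_i$ splits in $M$, the chosen place $v_i$ satisfies $M_{v_i}=F_{\gp_i}$, so $[M_{v_i}:\Q_p]=e_i f_i$. Viewing $\psim^{-1}$ as a one-dimensional $\bar{\Q}_p$-representation of $G_{M_{v_i}}$, the local Euler characteristic formula for a $p$-adic field reads
$$\dim \rH^0(M_{v_i},\psim^{-1})-\dim \rH^1(M_{v_i},\psim^{-1})+\dim \rH^2(M_{v_i},\psim^{-1})=-[M_{v_i}:\Q_p]\cdot 1.$$
Thus it suffices to prove that $\rH^0$ and $\rH^2$ both vanish, after which $\dim \rH^1(M_{v_i},\psim^{-1})=e_i f_i$ follows at once.

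For $\rH^0$, I would argue that $\psim=\psi/\psi^{\sigma}$ is nontrivial on $G_{M_{v_i}}$. This is exactly the $p$-regularity of $f$ at the split prime $\gp_i$: since $\gp_i\in S_p$ one has $G_{F_{\gp_i}}=G_{M_{v_i}}$, and $\rho_{|G_{F_{\gp_i}}}$ is the sum of the restrictions of $\psi$ and $\psi^{\sigma}$ to $G_{M_{v_i}}$, so the hypothesis $\psi_i'\ne\psi_i''$ forces $\psim_{|G_{M_{v_i}}}\ne 1$; hence $\rH^0(M_{v_i},\psim^{-1})=0$. For $\rH^2$, I would invoke local Tate duality, giving $\rH^2(M_{v_i},\psim^{-1})\simeq \rH^0\big(M_{v_i},\psim\otimes\bar{\Q}_p(1)\big)^{*}$. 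The twist $\psim\otimes\bar{\Q}_p(1)$ is the product of the finite-order character $\psim$ with the cyclotomic character, hence has infinite order and is in particular nontrivial on $G_{M_{v_i}}$; so its invariants vanish and $\rH^2(M_{v_i},\psim^{-1})=0$.

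The main (and essentially only) point requiring care is the vanishing of $\rH^0$: it is precisely here that the $p$-regularity hypothesis enters, and without it the invariants would be one-dimensional and the answer would jump to $e_i f_i+1$. The vanishing of $\rH^2$, by contrast, is automatic from the cyclotomic twist and needs no further hypothesis. I therefore expect the whole lemma to reduce to correctly identifying the local characters $\psi_i',\psi_i''$ at the split prime with $\psi_{|G_{M_{v_i}}}$ and $\psi^{\sigma}_{|G_{M_{v_i}}}$, together with the standard local Euler characteristic and Tate duality inputs.
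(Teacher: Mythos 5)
Your proof is correct and follows exactly the paper's argument: vanishing of $\rH^0$ via $p$-régularité, vanishing of $\rH^2$ via la dualité locale de Tate (the cyclotomic twist of a finite-order character being nontrivial), and the local Euler characteristic formula giving $\dim \rH^1(M_{v_i},\psi_{_{\heartsuit}}^{-1})=[F_{\gp_i}:\Q_p]=e_i f_i$. Your version simply spells out the identification $\psi_i',\psi_i''\leftrightarrow\psi_{|G_{M_{v_i}}},\psi^{\sigma}_{|G_{M_{v_i}}}$ at the split prime, which the paper leaves implicit.
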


\begin{proof}

La $p$-régularité de $\rho$ implique que $\rH^0(M_{v_i}, \psi_{_{\heartsuit}}^{-1})=0$. D'après la dualité locale de Tate,
$$\rH^2(M_{v_i}, \psi_{_{\heartsuit}}^{-1})\simeq \rH^0(M_{v_i}, \psi_{_{\heartsuit}}^{-1}(1))^\vee = 0.$$

Finalement, la formule de Tate de la caractéristique d'Euler locale implique
$$\dim \rH^1(M_{v_i}, \psi_{_{\heartsuit}}^{-1}) = [F_{\gp_i}:\Q_p]=e_i . f_i.$$

\end{proof}

On note $I \subset S_p$ (resp. $I' \subset S_p$) l'ensemble des plongements qui induisent des premiers $\gp_i \mid p$ de $F$ tels que $\psi^{\sigma}(\Frob_{v_i})= \alpha_i$ (resp. $\psi(\Frob_{v_i})= \alpha_i$) où $U_{\gp_i}.f=\alpha_i f$.

\begin{rem}

La base $(e_1, e_2)$ telle que $\rho_{|G_M}=\psi \oplus \psi^{\sigma}$ est définie à des scalaires près $\left(\text{i.e } (e_1,e_2) \sim (\mu e_1,\mu' e_2) \text{ où }(\mu,\mu') \in (\bar{\Q}^{\times})^2\right)$. Si $\left(\begin{smallmatrix} a & b \\ c & d \end{smallmatrix}\right)\in \rH^1(M,\ad \rho)^{\Gal(M/F)}$ dans la base $(e_1,e_2)$, alors un changement de la base $(e_1,e_2)$ à des scalaires près ne modifiera pas $a$ et $d$ et modifiera $c$ et $b$ par multiplication de scalaires. 

\end{rem}

Si $\gp_i$ est inerte ou ramifié dans $M$, alors par ordinarité en $p$, $\psi$ est non ramifié en $v_i$ et $\psi_{|G_{M_{v_i}}} =\psi^\sigma_{|G_{M_{v_i}}}$. Il en découle que $v_i$ se décompose complètement dans $H$ et que $\psi_{i}''/ \psi_{i}'$ est le caractère quadratique de $\Gal(M_{v_i}/F_{\gp_i})$. Ainsi, si $\gp_i \in S^p$, en normalisant la base $(e_1,e_2)$ de sorte que  $\rho(\sigma_0)=\left(\begin{smallmatrix}0 & \lambda \\ \lambda & 0 \end{smallmatrix}\right)$, où $\sigma_0$ est un élément fixé non trivial du groupe $\Gal(H_{w_i}/F_{\gp_i})$. On notera que $\rho(\sigma_0)$ est diagonal dans la base $(e_{1}+e_{2}, e_1-e_2)$. Donc on peut supposer que $(e_{1}+e_{2}, e_1-e_2)=(e'_{i,1},e'_{i,2})$ (voir \cite[\S4]{D-B}). En effet, avec les notations de la section \S \ref{Tangent space}, un calcul direct montre que le morphisme de la relation $(\ref{locp})$ est donné dans la base $(e_1,e_2)$ par

$$  \left(\begin{smallmatrix}a & b \\ c & d \end{smallmatrix}\right)   \overset{(C^*_{i},D^*_{i})}{\longrightarrow}   { \small  (\frac{a-c+b-d}{2},\frac{ a-c-b+d}{2} )} $$

De plus $\sigma_0$ échange $b_{|G_M}$ et $c_{|G_M}$. Ainsi, si
$\left(\begin{smallmatrix}a & b \\ c & d \end{smallmatrix}\right)\in t_{\cD^{ord}}$, alors 
\begin{equation}\label{non-split}
c^{\sigma_0}_{v_i}-a^{\sigma_0}_{v_i}=c_{v_i}-a_{v_i}  \text{ et }  a^{\sigma_0}_{v_i}+a_{v_i}-c^{\sigma_0}_{v_i}-c_{v_i}=0 
\end{equation}
avec $c_{v_i}=i(c)$ et $a_{v_i}=j(a)$, où $i$ et $j$ sont les restrictions suivantes: 
\begin{equation}\label{restriction}
c\in \rH^1(M, \psi_{_{\heartsuit}}^{-1})\overset{i}{\rightarrow} 
\mathrm{im}\left(\rH^1(M_{v_i},\bar \Q_p) \rightarrow \rH^1(I_{v_i},\bar \Q_p)\right)
\overset{j}{\leftarrow} \rH^1(M,\bar \Q_p) \ni a.
\end{equation}

Pour calculer $t_{\cD^{ord}}^{0}$, on doit rajouter la condition $a+d=0$ qui est équivalente à $d^\sigma=-d$.

\begin{prop}\label{tg M mixt}

On suppose que:
\begin{enumerate}

\item Le corps $M$ a $2r$ plongements complexes. 
\item Le caractère $\psi^2_{_{\heartsuit}}$ est non trivial.
\end{enumerate}

Alors $\dim t^0_{\cD^{ord}} \geq [F:\Q]-r-|S^p|-\sum_{\substack{\gp_i \in S^p}} f_i . e_i.$.

\end{prop}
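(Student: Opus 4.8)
The goal is to bound $\dim t^0_{\cD^{ord}}$ from below. By Corollary~\ref{in tg}(i) together with the condition $a+d=0$ (i.e. $d^\sigma=-d$) noted at the end of the previous paragraph, an element of $t^0_{\cD^{ord}}$ is a matrix $\left(\begin{smallmatrix} a & b \\ c & d \end{smallmatrix}\right)\in \rH^1(M,\ad\rho)^{\Gal(M/F)}$ satisfying the local conditions at the primes above $p$, with $c\in\rH^1(M,\psim^{-1})$ the key off-diagonal entry. The strategy is to forget most of the data and simply count: I would project $t^0_{\cD^{ord}}$ onto the $c$-component, estimate the dimension of the relevant subspace of $\rH^1(M,\psim^{-1})$, and subtract the number of linear conditions imposed by the local requirements at $p$.

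The starting point is Proposition~\ref{dimension neither nor}, which gives the global estimate
\[
2[F:\Q]-r \leq \dim \rH^1(M,\psim^{-1}).
\]
First I would peel off the other matrix entries. The entries $a$ and $d$ live in $\rH^1(M,\bar\Q_p)=\Hom(G_M,\bar\Q_p)$, whose dimension is controlled (via Leopoldt/class field theory) and which on passing to the $\Gal(M/F)$-invariants and imposing $d^\sigma=-d$ contributes a bounded correction; similarly $b$ is determined by $c$ through the relation $b=c^\sigma$ from Corollary~\ref{in tg}(ii). So up to a controlled count the dimension of $t^0_{\cD^{ord}}$ is governed by the image of the projection to $c\in\rH^1(M,\psim^{-1})$, cut out by the local conditions. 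This is where the split/inert dichotomy enters. For $\gp_i\in S_p$ (split in $M$), Lemma~\ref{iso loc} tells us $\dim\rH^1(M_{v_i},\psim^{-1})=e_if_i$, so the local condition $C^*_i$ imposes at most $e_if_i$ linear conditions on $c$ at $v_i$. For $\gp_i\in S^p$ (inert or ramified), the analysis preceding the proposition — equations~(\ref{non-split}) and~(\ref{restriction}) — shows the conditions take the form $a^{\sigma_0}_{v_i}+a_{v_i}=c^{\sigma_0}_{v_i}+c_{v_i}$ and the antisymmetry relation; these impose one condition per such prime plus the interaction with the diagonal, accounting for the $-|S^p|$ and $-\sum_{\gp_i\in S^p} f_ie_i$ terms.

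Assembling, I would argue: beginning from the lower bound $2[F:\Q]-r$ on $\dim\rH^1(M,\psim^{-1})$, the passage to invariants and the identification $b=c^\sigma$ roughly halves the contribution to give $[F:\Q]-\lceil r/?\rceil$-type terms, and then each family of local conditions subtracts its count. Keeping careful track of exactly how the invariants interact with the subtraction of $|S^p|$ and $\sum_{\gp_i\in S^p}f_ie_i$ should produce
\[
\dim t^0_{\cD^{ord}} \geq [F:\Q]-r-|S^p|-\sum_{\gp_i\in S^p} f_ie_i.
\]
The main obstacle I anticipate is bookkeeping rather than a single hard idea: precisely matching the global $\Gal(M/F)$-invariant count to the sum of local corrections, and in particular verifying that the ramified/inert primes in $S^p$ each cost exactly one condition beyond the $f_ie_i$ coming from the diagonal entries, without overcounting the overlap between the conditions~(\ref{non-split}) and the symmetry $a=d^\sigma$, $b=c^\sigma$. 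The cleanest route is to set up the projection to $c$ as an explicit linear map, compute its image dimension as a difference of two codimensions, and invoke Proposition~\ref{dimension neither nor} and Lemma~\ref{iso loc} only at the two points where the global and local dimensions are needed.
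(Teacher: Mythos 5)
There is a genuine gap, in two places. First, your overall strategy runs in the wrong direction for a \emph{lower} bound. You propose to project $t^0_{\cD^{ord}}$ onto its $c$-component and then ``count the local conditions''; but counting conditions on $\rH^1(M,\psim^{-1})$ only tells you something about $t^0_{\cD^{ord}}$ if you first prove that every $c$ satisfying those conditions actually \emph{arises} from an element of $t^0_{\cD^{ord}}$. That existence statement is the heart of the matter, and your sketch never addresses it. The paper's proof does exactly this: it exhibits an explicit injection into $t^0_{\cD^{ord}}$, namely $c \mapsto \left(\begin{smallmatrix} 0 & c^{\sigma} \\ c & 0 \end{smallmatrix}\right)$ (so $a=d=0$, $b=c^\sigma$), where $c$ runs over the kernel
$$\ker \left(\rH^1(M,\psim^{-1}) \longrightarrow \prod_{ \gp_i \in I} \rH^1(M_{v_i},\psim^{-1}) \times \prod_{ \gp_i \in I'} \rH^1(M_{v_i^{\sigma}},\psim^{-1}) \times \prod_{ \gp_i \in S^p} \rH^1(M_{v_i},\bar{\Q}_p) \right),$$
and then \emph{verifies}, using the relations (\ref{non-split}) specialized at $a=d=0$ together with Corollaire \ref{in tg} and \ref{in tg n,ord}, that such matrices satisfy all the ordinarity conditions (this is the inclusion (\ref{CM relative})). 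Note the precise shape of the local conditions you would have to impose: at split primes the vanishing is at $v_i$ or at $v_i^{\sigma}$ according to whether $\gp_i\in I$ or $\gp_i\in I'$, and at inert/ramified primes one must impose \emph{full} local vanishing of $c$ in $\rH^1(M_{v_i},\bar\Q_p)$, not merely a ramification condition.

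Second, your bookkeeping is wrong, and this is not a cosmetic issue since your assembly contains an unresolved ``$?$''. There is no ``halving'': the map $c \mapsto \left(\begin{smallmatrix} 0 & c^{\sigma} \\ c & 0 \end{smallmatrix}\right)$ is injective, so the anti-diagonal invariant classes contribute the full dimension $\geq 2[F:\Q]-r$ of Proposition \ref{dimension neither nor}, and the entire drop from $2[F:\Q]$ to $[F:\Q]$ comes from the local conditions, not from passing to $\Gal(M/F)$-invariants. Likewise, at each $\gp_i \in S^p$ the cost is not ``one condition plus an interaction with the diagonal'' but $\dim \Hom(G_{M_{v_i}},\bar\Q_p)=2e_if_i+1$ conditions, since $[M_{v_i}:\Q_p]=2e_if_i$ there. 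The correct count is
$$\left(2[F:\Q]-r\right) - \sum_{\gp_i \in S_p} e_if_i - \sum_{\gp_i \in S^p}\left(2e_if_i+1\right) = [F:\Q]-r-|S^p|-\sum_{\gp_i \in S^p} e_if_i,$$
where the simplification uses $\sum_{\gp_i \in S_p} e_if_i + \sum_{\gp_i \in S^p} e_if_i = [F:\Q]$; with your accounting (no halving, and $|S^p|+\sum_{\gp_i\in S^p}e_if_i$ total conditions at $S^p$) you would instead land on $[F:\Q]-r-|S^p|$, which is not the statement being proved.
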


\begin{proof}

Soit $\left(\begin{smallmatrix} a & b \\ c & d \end{smallmatrix}\right)$ un $1$-cocycle de $t^0_{\cD^{ord}}$ dans la base $(e_1,e_2)$. D'après le corollaire \ref{in tg}, on a $a=d^{\sigma}$. De plus, si $a=d=0$, alors les équations (\ref{non-split}) impliquent que $c_{v_i}=0$ pour tout premier $\gp_i$ de $F$ inerte ou ramifié dans $M$.

Puisque $S_p=I \bigsqcup I'$, le corollaire \ref{in tg n,ord} et les équations (\ref{non-split}) impliquent que $c$ (resp. $b$) est non ramifié en $\gp_i \in I$ (resp. $\gp_i \in I'$) et puisque $\sigma$ échange $b$ et $c$, on obtient l'inclusion suivante
\begin{equation}\label{CM relative}
\ker \left(\rH^1(M,\psi_{_{\heartsuit}}^{-1}) \overset{(C^*_{i})}{\rightarrow}  \underset{ \gp_i \in I}{\prod} \rH^1(M_{v_i},\psi_{_{\heartsuit}}^{-1}) \underset{ \gp_i \in I'}{\prod} \rH^1(M_{v_i^{\sigma}},\psi_{_{\heartsuit}}^{-1}) \underset{ \gp_i \in S^p}{\prod} \rH^1(M_{v_i},\bar{\Q}_p) \right) \subset t^0_{\cD^{ord}}\end{equation}
D'après les lemmes \ref{dimension neither nor}, \ref{iso loc} et le fait que $\dim_{\bar{\Q}_p}  \Hom(G_{M_{v_i}},\bar{\Q}_p)=2e_i.f_i+1$ si $\gp_i$ est inerte où ramifié dans $M$, on a la minoration suivante  $$\dim t^0_{\cD^{ord}} \geq [F:\Q]-r-|S^p|-\sum_{\substack{\gp_i \in S^p}} f_i . e_i.$$

\end{proof}

\section{Calcul de la dimension des espaces tangents de $\cD$, $\cD^{ord}$ et $\cD^{ord}_{\det \rho}$}\label{M is Real}\

Dans cette section, on calcule les dimensions des espaces tangents des foncteurs $\cD$, $\cD^{ord}$ et $\cD^{ord}_{\det \rho}$ dans le cas où $M$ est totalement réel ou totalement complexe sous certaines conditions sur $\rho$ pour ce dernier cas.

\subsection{Le cas où $M$ est totalement réel}\

On suppose dans cette sous-section que $M$ est totalement réel et on considère la condition suivante sur $M$:

\begin{itemize}
\item[$\bullet$]   $({ \bf L}_M)$ la conjecture de Leopoldt est vraie pour $M$.

\end{itemize}

Puisque $\det \rho(\tau_i)=-1$ pour tout $1 \leq i \leq n$ (i.e $\rho$ est totalement impaire), alors $\psi_{_{\heartsuit}}^{-1}(\tau_i)=-1$ pour les conjugaisons complexes $(\tau_i)_{\{1\leq i \leq n\}}$ de $G_F$. 

En conséquence, les conjugaisons complexes $(\tau_i)_{\{1\leq i \leq n\}}$ définissent le même élément $c$ du groupe $\Gal(H/M)$ et $c$ est dans le centre de $\Gal(H/F)$. Ainsi, le sous-corps $H_+$ de $H$ fixé par $c$ est totalement réel et $H$ est un corps CM.

En généralisant les techniques de \cite[théorème 3.1]{cho-vatsal}, on obtient la proposition suivante.

\begin{prop}\label{factoring global}\
\begin{enumerate}

\item Il existe un isomorphisme $$\Hom(\prod_{{w} \mid p} U^1_{w}/\bar\cO_{H}^\times, \bar{\Q}_p )[\psi_{_{\heartsuit}}^{-1}]\simeq \rH^{1}(M, \psi_{_{\heartsuit}}^{-1}).$$

\item La projection canonique $\prod_{{w} \mid p}U^1_{w}  \twoheadrightarrow \prod_{{w} \mid p}U^1_{w}/\bar\cO_{H}^\times$ induit un isomorphisme entre les $\psi_{_{\heartsuit}}^{-1}$-espaces propres $$\Hom(\prod_{{w} \mid p} U^1_{w}, \bar{\Q}_p)[\psi_{_{\heartsuit}}^{-1}]\simeq \Hom(\prod_{{w} \mid p} U^1_{w}/\bar\cO_{H}^\times, \bar{\Q}_p)[\psi_{_{\heartsuit}}^{-1}].$$

\end{enumerate}
\end{prop}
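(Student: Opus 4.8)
Le plan est de déduire les deux assertions d'un unique énoncé clé : la nullité du $\psim^{-1}$-espace propre des unités globales, $\Hom(\cO_H^\times,\bar{\Q}_p)[\psim^{-1}]=0$. Je traiterais d'abord (ii), qui se ramène presque immédiatement à cet énoncé, puis (i), qui résultera de la suite exacte (\ref{global-local}) combinée à l'isomorphisme (\ref{iso-inf}).

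Pour (ii), je partirais de la suite exacte courte de $\Gal(H/F)$-modules
\[ 0\longrightarrow\bar\cO_H^\times\longrightarrow\prod_{w\mid p}U^1_w\longrightarrow\prod_{w\mid p}U^1_w/\bar\cO_H^\times\longrightarrow0, \]
à laquelle j'applique le foncteur exact à gauche $\Hom(-,\bar{\Q}_p)$, puis le foncteur de passage au $\psim^{-1}$-espace propre sous $\Gal(H/M)$, exact car on est en caractéristique nulle et $\Gal(H/M)$ est fini. J'obtiendrais la suite exacte
\[ 0\to\Hom\Big(\textstyle\prod_{w} U^1_w/\bar\cO_H^\times,\bar{\Q}_p\Big)[\psim^{-1}]\to\Hom\Big(\textstyle\prod_{w} U^1_w,\bar{\Q}_p\Big)[\psim^{-1}]\to\Hom(\bar\cO_H^\times,\bar{\Q}_p)[\psim^{-1}]. \]
Comme $\bar\cO_H^\times$ est un quotient de $\cO_H^\times\otimes\Z_p$, la restriction fournit une injection $\Hom(\bar\cO_H^\times,\bar{\Q}_p)\hookrightarrow\Hom(\cO_H^\times,\bar{\Q}_p)$; il suffirait donc d'établir l'énoncé clé pour conclure que la première flèche ci-dessus est un isomorphisme, ce qui est exactement (ii).

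L'énoncé clé est le cœur de la preuve. J'utiliserais la décomposition de $G'=\Gal(H/F)$-modules du lemme \ref{multiplicity-Mix},
\[ \Hom(\cO_H^\times,\bar{\Q}_p)\simeq\Big(\bigoplus_{\pi\neq1}\pi^{\alpha_\pi}\Big)\oplus1^{\,n-1},\qquad\alpha_\pi=\sum_{i=1}^{n}\dim\pi^{+\tau_i}. \]
En supposant $\psim^2$ non trivial, ce qui assure l'irréductibilité de la représentation $\pi_0=\Ind_M^F\psim^{-1}$, la réciprocité de Frobenius montre que $\pi_0$ est l'unique $\pi\in\widehat{G'}$ dont la restriction à $\Gal(H/M)$ contienne le caractère non trivial $\psim^{-1}$; les facteurs triviaux $1^{\,n-1}$ ne contribuent pas, et l'on obtient $\dim\Hom(\cO_H^\times,\bar{\Q}_p)[\psim^{-1}]=\alpha_{\pi_0}$. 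Il reste à voir que $\alpha_{\pi_0}=0$ : comme $M$ est totalement réel, chaque conjugaison complexe $\tau_i$ fixe $M$, appartient à $\Gal(H/M)$ et y définit l'élément central $c$; l'égalité $\psim^{-1}(c)=-1$ (conséquence de $\det\rho(\tau_i)=-1$) force alors $\pi_0(\tau_i)=\pi_0(c)=-\mathrm{Id}$, d'où $\dim\pi_0^{+\tau_i}=0$ pour tout $i$ et $\alpha_{\pi_0}=0$.

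Enfin, pour (i), je remplacerais dans (\ref{global-local}) le terme médian $\Hom((\cO_H\otimes\Z_p)^\times,\bar{\Q}_p)$ par $\Hom(\prod_{w}U^1_w,\bar{\Q}_p)$ : comme $\bar{\Q}_p$ est sans torsion, la restriction le long de l'inclusion d'indice fini $\prod_w U^1_w\subset(\cO_H\otimes\Z_p)^\times$ est un isomorphisme. L'exactitude de (\ref{global-local}) identifie alors $\Hom(G_H,\bar{\Q}_p)$ au noyau de la restriction aux unités globales, c'est-à-dire à $\Hom(\prod_w U^1_w/\bar\cO_H^\times,\bar{\Q}_p)$, et cette identification est $\Gal(H/M)$-équivariante par fonctorialité de la réciprocité d'Artin. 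En passant aux $\psim^{-1}$-espaces propres et en invoquant l'isomorphisme (\ref{iso-inf}), $\rH^1(M,\psim^{-1})\simeq\rH^1(H,\bar{\Q}_p)[\psim^{-1}]=\Hom(G_H,\bar{\Q}_p)[\psim^{-1}]$, j'obtiendrais l'isomorphisme de (i). L'obstacle principal est le calcul de $\alpha_{\pi_0}$ de l'étape clé : tout repose sur le contrôle précis de l'action des conjugaisons complexes sur la représentation induite $\pi_0$, contrôle que seule l'hypothèse « $M$ totalement réel » (faisant de $H$ un corps CM et plaçant les $\tau_i$ dans $\Gal(H/M)$) rend possible.
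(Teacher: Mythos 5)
Your argument for (i) coincides with the paper's: it combines the class field theory sequence (\ref{global-local}) with the inflation-restriction isomorphism (\ref{iso-inf}), after the harmless replacement of $(\cO_H\otimes\Z_p)^\times$ by $\prod_{w\mid p}U^1_w$. The gap lies in your proof of the key vanishing statement $\Hom(\cO_H^\times,\bar{\Q}_p)[\psim^{-1}]=0$ on which your (ii) rests: you assume $\psim^2$ non-trivial in order to make $\pi_0=\Ind_M^F\psim^{-1}$ irreducible, and then use that $\pi_0$ is the unique irreducible representation of $G'$ whose restriction to $\Gal(H/M)$ contains $\psim^{-1}$. But this hypothesis is not part of Proposition \ref{factoring global}, nor of Theorem \ref{dim c} and Theorem \ref{main-thm-RM} where the proposition is applied: in the totally real setting $\psim=\psi/\psi^{\sigma}$ is only assumed non-trivial, and the quadratic case $\psim^2=1$ (projective image a Klein four group, i.e. forms with simultaneously RM and CM) is allowed --- only Theorems \ref{CM-case} and \ref{thm M mixt} exclude it. When $\psim^2=1$, $\Ind_M^F\psim^{-1}$ is reducible, equal to $\chi_1\oplus\chi_2$ for the two characters of $G'$ extending $\psim^{-1}$, and your identity $\dim\Hom(\cO_H^\times,\bar{\Q}_p)[\psim^{-1}]=\alpha_{\pi_0}$ no longer makes sense as written. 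The repair is easy and stays within your method (each $\chi_j$ still sends the common complex conjugation $c\in\Gal(H/M)$ to $\psim^{-1}(c)=-1$, so $\alpha_{\chi_1}=\alpha_{\chi_2}=0$ and the eigenspace still vanishes), but as written your proof omits a case the proposition must cover.

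Apart from this, note that your route to (ii) is genuinely more roundabout than the paper's, which never passes through the Dirichlet-unit decomposition of Lemma \ref{multiplicity-Mix}: given $\varrho$ in $\Hom(\prod_{w\mid p}U^1_w,\bar{\Q}_p)[\psim^{-1}]$, the relation $\varrho(g)=\psim^{-1}(c)\varrho(c(g))=-\varrho(c(g))$ forces $\varrho$ to vanish on $\cO_{H_{+}}^{\times}$ (where $H_+=H^{c}$), hence on $\cO_H^{\times}$ since $\cO_H^{\times}/\cO_{H_{+}}^{\times}$ is finite and $\bar{\Q}_p$ is torsion-free, hence on the closure $\bar\cO_H^{\times}$; so $\varrho$ factors through $\prod_{w\mid p}U^1_w/\bar\cO_H^{\times}$. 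This direct computation uses exactly the two facts your argument ultimately rests on --- total oddness of $\rho$ giving $\psim^{-1}(c)=-1$, and $H$ being CM --- but needs no irreducibility, no case distinction, and no Frobenius reciprocity. Your surrounding reductions (left exactness of $\Hom(-,\bar{\Q}_p)$, exactness of the $\psim^{-1}$-eigenspace functor in characteristic zero, the equivariant injection $\Hom(\bar\cO_H^{\times},\bar{\Q}_p)\hookrightarrow\Hom(\cO_H^{\times},\bar{\Q}_p)$) are all correct.
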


\begin{proof}\

(i)D'après la suite exacte (\ref{global-local}) et l'isomorphisme (\ref{iso-inf}), on a $$\rH^1(M,\psi_{_{\heartsuit}}^{-1})\simeq \Hom(\prod_{{w} \mid p} U^1_{w}/\bar\cO_{H}^{\times}, \bar{\Q}_p)[\psi_{_{\heartsuit}}^{-1}].$$

(ii)Soit $\varrho$ un élément de $\Hom(\prod_{{w} \mid p} U^1_{w}, \bar{\Q}_p)[\psi_{_{\heartsuit}}^{-1}]$. Pour tout $g$ dans $\prod_{{w} \mid p} U^1_{w}$, on a:
\begin{equation}\label{Glob}
\begin{split}
\varrho(g) &=\psi_{_{\heartsuit}}^{-1}(c) \varrho(c(g))
\\&= -\varrho(c(g))
\end{split}
\end{equation} 

De plus, $H$ est un corps CM et donc le groupe quotient $\cO_{H}^{\times}/\cO_{H_{+}}^{\times}$ est d'ordre fini ($H_+=H^{c}\subset \R$). Maintenant, on peut voir que la relation $(\ref{Glob})$ implique que $\forall x \in \cO_{H_{+}}^{\times}$, on a $\varrho(x) = - \varrho(c(x))= - \varrho(x)$, donc $\varrho$ se factorise sur $\cO_{H_+}^{\times}$ et puisque le groupe $\cO_{H}^{\times}/\cO_{H_+}^{\times}$ est d'ordre fini, $\varrho$ se factorise sur $\cO_{H}^{\times}$. De là, on obtient l'isomorphisme voulu.

\end{proof}

\begin{prop}\label{tg ord}Avec les notations de (\ref{non-split}), on a:

\begin{enumerate}
 
\item Si $\left(\begin{smallmatrix} a & b \\ c & d \end{smallmatrix}\right)\in t_{\cD}\subset \rH^1(M,\ad \rho)^{\Gal(M/F)}$, alors $a=a^{\sigma}$.

\item Soit $\gp_i$ un premier de $F$ qui est inerte ou ramifié dans $M$, alors $c_{v_i}=c_{v_i}^{\sigma_0}$. 

\item Soient $\left(\begin{smallmatrix} a & b \\ c & d \end{smallmatrix}\right)\in t^0_{\cD^{ord}}\subset \rH^1(M,\ad^0 \rho)^{\Gal(M/F)}$ et $\gp_i$ un premier de $F$ qui est inerte ou ramifié dans $M$, alors $c_{v_i}$ et $a_{v_i}$ sont non ramifiés.

\end{enumerate}

\end{prop}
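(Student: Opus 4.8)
The plan is to establish (i) first and then deduce (ii) and (iii) formally from it together with the ordinary local relations~(\ref{non-split}). For (i) I would start from Corollaire~\ref{in tg}(ii), which (as a statement about $\Gal(M/F)$-invariant classes) gives $a = d^\sigma$, hence $a^\sigma = d$; thus proving $a = a^\sigma$ amounts to proving $a = d$, i.e. the vanishing of the anti-invariant diagonal part $a - d \in \Hom(G_M,\bar{\Q}_p)^{\sigma = -1}$. Under the decomposition~(\ref{13}) this direction is exactly the $\epsilon_M$-isotypic component of the diagonal, and $\Hom(G_M,\bar{\Q}_p)^{\sigma=-1}\simeq \rH^1(F,\epsilon_M)$ (the inflation map being an isomorphism since $\rH^1(\Gal(M/F),\bar{\Q}_p)=0$ in characteristic zero). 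The decisive input is that $M$ is totally real, so $\epsilon_M$ is totally even: $\epsilon_M(\tau_i)=+1$ for every complex conjugation $\tau_i$. Granting Leopoldt for $M$, the space of homomorphisms $G_M \to \bar{\Q}_p$ unramified outside $p$ is one-dimensional, spanned by the cyclotomic homomorphism, which is Galois-equivariant and therefore $\sigma$-invariant; its anti-invariant part vanishes. Equivalently, I would read off the same conclusion from Lemme~\ref{multiplicity-Mix}, whose multiplicity $m_{\epsilon_M}=\sum_i \dim \epsilon_M^{-\tau_i}$ is zero precisely because $\epsilon_M$ is even. Either way $a - d = 0$ and $a = a^\sigma$.

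For (ii), let $\gp_i$ be inert or ramified in $M$, so that the chosen $\sigma_0 \in \Gal(H_{w_i}/F_{\gp_i})$ restricts to the nontrivial element of $\Gal(M_{v_i}/F_{\gp_i})$, i.e. to the local incarnation of $\sigma$. Because $a = a^\sigma$ holds globally by (i), its restriction to $v_i$ satisfies $a^{\sigma_0}_{v_i}=a_{v_i}$. Plugging this into the first relation of~(\ref{non-split}), namely $c^{\sigma_0}_{v_i}-a^{\sigma_0}_{v_i}=c_{v_i}-a_{v_i}$, the $a$-terms cancel and I obtain $c^{\sigma_0}_{v_i}=c_{v_i}$.

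For (iii) I would take a class in $t^0_{\cD^{ord}}\subset t_\cD$ and combine the trace-zero condition $a+d=0$ with the invariance $a=d^\sigma$: these give $a^\sigma=-a$, while (i) gives $a^\sigma=a$, whence $a=0$ and in particular $a_{v_i}$ is unramified. Feeding $a=0$ into the second relation of~(\ref{non-split}) yields $c^{\sigma_0}_{v_i}=-c_{v_i}$, and comparing with (ii) forces $c_{v_i}=0$, so $c$ is unramified at $v_i$ as well.

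The only real difficulty is (i): the whole argument rests on showing that the even, $\epsilon_M$-isotypic direction carries no nontrivial cohomology, and this is exactly where the totally real hypothesis on $M$ together with Leopoldt's conjecture is indispensable, the same mechanism that, through Proposition~\ref{factoring global}, renders the global units invisible in the $\psim^{-1}$-eigenspace. Once (i) is in hand, (ii) and (iii) are routine manipulations of the relations~(\ref{non-split}).
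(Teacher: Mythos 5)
Your proof is correct and is essentially the paper's own argument: for (i) the paper simply notes that under $({\bf L}_M)$ the field $M.\Q_{\infty}$ is the unique $\Z_p$-extension of $M$, so that $\Hom(G_M,\bar{\Q}_p)$ is one-dimensional, spanned by the ($\sigma$-invariant) cyclotomic homomorphism — exactly the mechanism you invoke when you kill the anti-invariant part $\Hom(G_M,\bar{\Q}_p)^{\sigma=-1}\simeq \rH^1(F,\epsilon_M)$ — and its proofs of (ii) and (iii) are the same cancellations in (\ref{non-split}) that you perform. One caveat about your side remark: the alternative derivation of (i) from Lemme \ref{multiplicity-Mix} does not work as stated, because for $\pi=\epsilon_M$ that lemma unconditionally gives only the vacuous bound $0\leq m_{\epsilon_M}$, and the equality $m_{\epsilon_M}=\sum_{i}\dim\epsilon_M^{-\tau_i}=0$ would require the Leopoldt conjecture for $H$ rather than for $M$; your main argument, which needs only $({\bf L}_M)$, is the correct one and matches the paper.
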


\begin{proof}

(i) L'hypothèse $({ \bf L}_M)$ implique que $M.\Q_{\infty}$ est l'unique $\Z_p$-extension de $M$ et donc $a=a^{\sigma}$.

(ii) Il découle de la proposition \ref{in tg n,ord}, de (i) et de (\ref{non-split}) que $c_{v_i}=c_{v_i}^{\sigma_0}$. 

(iii) L'énoncé (i) implique que $d=0$, par conséquent, (ii) et (\ref{non-split}) impliquent que $c_{v_{i}}$ est trivial.
\end{proof}

\begin{theorem}\label{dim c}On suppose que $M$ est totalement réel, alors:

\begin{enumerate}

\item La dimension de $t_{\cD}$ est $[F:\Q]+1$ et la dimension de $t_{\cD^{ord}}$ est $\max \{1, \sum_{\substack{\gp_i \in S_p}} f_i . e_i\}$.

\item On a l'isomorphisme suivant 

\begin{equation*}\label{1-co-cycles}
\begin{split}
& t^0_{\cD^{ord}}\simeq  \Hom(\prod_{w \mid v_i^{\sigma},\gp_i \in I} U^1_{w} \prod_{w \mid v_i,\gp_i \in I' } U^1_{w}, \bar{\Q}_p)[\psi_{_{\heartsuit}}^{-1}] \\
& \text{et } \dim t^0_{\cD^{ord}}= \sum_{\substack{\gp_i \in S_p}} f_i . e_i.
\end{split}
\end{equation*}
\end{enumerate}
\end{theorem}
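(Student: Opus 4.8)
Le plan est d'analyser les trois espaces tangents via leur description comme sous-espaces de $\rH^1(M,\ad\rho)^{\Gal(M/F)}$ dans la base $(e_1,e_2)$, en �crivant une classe sous forme matricielle $\left(\begin{smallmatrix} a & b \\ c & d\end{smallmatrix}\right)$ avec $a=d^{\sigma}$ et $b=c^{\sigma}$ (Corollaire \ref{in tg}), et de scinder chaque calcul en une contribution diagonale (les classes $a,d$) et une contribution hors-diagonale (la classe $c\in\rH^1(M,\psim^{-1})$, qui d�termine $b=c^{\sigma}$). La partie diagonale est contr�l�e par Leopoldt : la Proposition \ref{tg ord}(i) donne $a=a^{\sigma}$, donc $a=d$ appartient � la droite cyclotomique $\rH^1(F,\bar\Q_p)$, de dimension $1$, tandis que la composante $\varepsilon_M$ de la diagonale s'annule puisque $\rH^1(M,\bar\Q_p)$ est $\sigma$-invariant. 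Pour $t^0_{\cD^{ord}}$ l'annulation de la trace force de plus $a=-d$, d'o� $a=d=0$. Chaque calcul se ram�ne ainsi � l'�tude de l'espace hors-diagonal $\rH^1(M,\psim^{-1})$ soumis aux conditions locales en $p$.

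L'ingr�dient global d�cisif est l'�galit� $\dim_{\bar\Q_p}\rH^1(M,\psim^{-1})=2[F:\Q]$. Je l'obtiendrais en combinant l'isomorphisme d'inflation-restriction (\ref{iso-inf}) et la Proposition \ref{factoring global} : comme $H$ est un corps CM et $\psim^{-1}$ totalement impair, les unit�s globales ne contribuent pas au $\psim^{-1}$-espace propre, de sorte que
\begin{equation*}
\rH^1(M,\psim^{-1})\simeq\Hom\!\left(\prod_{w\mid p}U^1_{w},\bar\Q_p\right)[\psim^{-1}].
\end{equation*}
L'isomorphisme (\ref{log_p}) identifie les unit�s semi-locales � $\bigoplus_{1\leq i\leq n}\bar\Q_p[G']$ comme $G'=\Gal(H/F)$-module ; par restriction � $\Gal(H/M)$, chaque caract�re de ce sous-groupe cyclique d'indice deux --- en particulier $\psim^{-1}$ --- appara�t avec multiplicit� $[G':\Gal(H/M)]=2$ dans chaque copie, d'o� $\dim\rH^1(M,\psim^{-1})=2n$. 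Cette identification est cruciale : elle transforme toute restriction locale en une honn�te projection sur un sous-produit de $\prod_{w\mid p}U^1_{w}$, �liminant tout d�faut de Poitou--Tate et rendant ind�pendantes les conditions locales ci-dessous.

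Je lirais ensuite les conditions locales. En �crivant $S_p=I\sqcup I'$, les conditions d'ordinarit� du Corollaire \ref{in tg n,ord} et les �quations (\ref{non-split}) imposent que $c$ s'annule en $v_i$ pour $\gp_i\in I$, en $v_i^{\sigma}$ pour $\gp_i\in I'$, et (par la Proposition \ref{tg ord}(iii), en combinant $c_{v_i}=c_{v_i}^{\sigma_0}$ avec l'annulation sur l'inertie) est non ramifi�e et triviale aux premiers inertes ou ramifi�s, pour $t^0_{\cD^{ord}}$. C'est exactement l'inclusion (\ref{CM relative}), dont j'affirme qu'elle est une �galit� : via l'identification pr�c�dente elle devient l'�nonc� que $t^0_{\cD^{ord}}$ est le sous-espace des homomorphismes support�s sur les facteurs survivants $U^1_{w}$ avec $w\mid v_i^{\sigma}$ $(\gp_i\in I)$ et $w\mid v_i$ $(\gp_i\in I')$, soit
\begin{equation*}
t^0_{\cD^{ord}}\simeq\Hom\!\left(\prod_{w\mid v_i^{\sigma},\,\gp_i\in I}U^1_{w}\prod_{w\mid v_i,\,\gp_i\in I'}U^1_{w},\ \bar\Q_p\right)[\psim^{-1}].
\end{equation*}
Chaque premier survivant contribue $\dim\rH^1(M_{v_i},\psim^{-1})=e_i f_i$ par le Lemme \ref{iso loc}, d'o� $\dim t^0_{\cD^{ord}}=\sum_{\gp_i\in S_p}e_i f_i$, ce qui d�montre (ii).

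Pour (i), le m�me d�compte face � $\dim\rH^1(M,\psim^{-1})=2n$ conclut. Pour $t_{\cD}$ seules les conditions $C^{*}_i=0$ subsistent : aux premiers d�compos�s elles tuent $e_i f_i$ en un premier (au total $\sum_{S_p}e_i f_i$), tandis qu'en un premier inerte ou ramifi� la formule explicite de $C^{*}_i$ jointe � $a=d$ force l'annulation de la partie $\Gal(M_{v_i}/F_{\gp_i})$-anti-invariante de $c_{v_i}$ (codimension $e_i f_i$, au total $\sum_{S^p}e_i f_i$) ; comme $\sum_{\gp_i\mid p}e_i f_i=n$ et que les conditions sont ind�pendantes, la partie hors-diagonale est de dimension $2n-n=n$, d'o� $\dim t_{\cD}=n+1=[F:\Q]+1$. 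Enfin $\dim t_{\cD^{ord}}$ r�sulte d'une dichotomie sur $S_p$ : si $S_p\neq\emptyset$, la condition inertielle $D^{*}_i$ en un premier d�compos� �value la classe cyclotomique $a=d$ sur l'inertie en $p$, laquelle est non nulle, forçant $a=d=0$ et donc $t_{\cD^{ord}}=t^0_{\cD^{ord}}$, de dimension $\sum_{\gp_i\in S_p}e_i f_i$ ; si $S_p=\emptyset$, aucun premier d�compos� n'impose l'annulation de la diagonale, et l'application d�terminant $t_{\cD^{ord}}\to\rH^1(F,\bar\Q_p)$ a une image de dimension un --- la ramification de la classe cyclotomique aux premiers inertes et ramifi�s �tant absorb�e par la partie sym�trique $\tfrac{c+c^{\sigma_0}}{2}$ du cocycle hors-diagonal --- si bien que $\dim t_{\cD^{ord}}=\dim t^0_{\cD^{ord}}+1=1$. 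Dans les deux cas $\dim t_{\cD^{ord}}=\max\{1,\sum_{\gp_i\in S_p}f_i e_i\}$. Je m'attends � ce que les points v�ritablement d�licats soient l'�galit� (et non la simple inclusion) dans (\ref{CM relative}), l'ind�pendance des conditions locales, et l'�nonc� d'absorption du cas $S_p=\emptyset$ ; les deux premiers sont garantis par la description en termes d'unit�s semi-locales, de sorte que le poids de l'argument repose sur la Proposition \ref{factoring global} et sur le caract�re CM de $H$.
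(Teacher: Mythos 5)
Your proof is correct and takes essentially the same route as the paper's: split a cocycle into its diagonal part, controlled by Leopoldt via Proposition \ref{tg ord} (reduced to the cyclotomic line, and to $0$ in the trace-zero case), and its off-diagonal part, identified with semi-local unit homomorphisms through Proposition \ref{factoring global} and the isomorphism (\ref{log_p}) --- this is exactly the paper's formula (\ref{c iso}) --- then read the local conditions off componentwise and conclude with the same dichotomy on $S_p$ for $t_{\cD^{ord}}$. One harmless imprecision: taken literally, the inclusion (\ref{CM relative}) need not become an equality (an element of $t^0_{\cD^{ord}}$ is only forced to be unramified, not trivial, on the decomposition groups at places of $S^p$, its Frobenius values there being determined globally by the surviving components); but the statement you actually use for the count --- that $t^0_{\cD^{ord}}$ is the space of eigenspace homomorphisms supported on the surviving factors $U^1_w$ --- is the correct one, and coincides with the paper's (\ref{c iso}) specialized to $a=d=0$.
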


\begin{proof}

i) Soit $\left(\begin{smallmatrix} a & b \\ c & d \end{smallmatrix}\right)$ un élément de $t_{\cD}\subset \rH^1(F,\ad \rho)^{\Gal(H/F)}$. D'après la décomposition (\ref{adbc}) et le corollaire \ref{in tg}, il suffit de déterminer la dimension des espaces vectoriels où vivent $d$ et $c$. 

La décomposition (\ref{adbc}) implique que $d\in \Hom(G_M,\bar{\Q}_p)$ et puisque la condition $({ \bf L}_M)$ est vérifiée, alors $\dim \Hom(G_M,\bar{\Q}_p)=1$. 

D'autre part, $\sigma$ échange $b$ et $c$ et échange aussi $v_i^{\sigma}$ et $v_i$ quand $\gp_i \in S_p=I' \bigsqcup I$. D'après l'isomorphisme (\ref{log_p}), les propositions \ref{in tg n,ord}, \ref{factoring global}, \ref{tg ord} et le fait que $\Gal(H/M)$ permute les places de $H$ au dessus de $v_i$, on a:

{ \small 
\begin{equation}\label{c iso}
\begin{split} 
& c \in  \Hom(\prod_{w \mid v_i^{\sigma},\gp_i \in I} U^1_{w} \prod_{w \mid v_i,\gp_i \in I' } U^1_{w}, \bar{\Q}_p)[\psi_{_{\heartsuit}}^{-1}] \oplus  (\Hom(\prod_{w \mid v_i,\gp_i \in S^p } U^1_{w}, \bar{\Q}_p)[\psi_{_{\heartsuit}}^{-1}])^{\Gal(M_{v_i}/F_{\gp_i})} \\
\dim& \Hom(\prod_{w \mid v_i^{\sigma},\gp_i \in I} U^1_{w} \prod_{w \mid v_i,\gp_i \in I' } U^1_{w}, \bar{\Q}_p)[\psi_{_{\heartsuit}}^{-1}] \oplus  (\Hom(\prod_{w \mid v_i,\gp_i \in S^p } U^1_{w}, \bar{\Q}_p)[\psi_{_{\heartsuit}}^{-1}])^{\Gal(M_{v_i}/F_{\gp_i})}=[F:\Q]
\end{split}
\end{equation}

}
Donc, $\dim t_{\cD}= [F:\Q]+1$.

Si on suppose de plus que $\left(\begin{smallmatrix} a & b \\ c & d \end{smallmatrix}\right)\in t_{\cD^{ord}}$, on doit ajouter la condition supplémentaire (\ref{non-split}) quand $\gp_i \in S^p$ et $d_{v_i}$ est non ramifié quand $\gp_i \in S_p$. On procède cas par cas: 

Si $S_p \ne \varnothing$, alors $d$ est non ramifié en une place de $M$ au dessus de $p$ et il en découle que $d$ est trivial (puisque on a supposé la condition $({ \bf L}_M)$). Ainsi, la proposition \ref{tg ord} et  la relation (\ref{non-split}) impliquent que $c_{v_i}$ est trivial quand $\gp_i \in S^p$. Donc, (\ref{c iso}) implique que $\dim t_{\cD^{ord}}=|I'_F | =\sum_{\substack{\gp_i \in S_p}} f_i . e_i$.

Si $S_p$ est vide, les relations (\ref{non-split}) et $d\in \Hom(G_M,\bar{\Q})$ impliquent que $\dim t_{\cD^{ord}}=1$.

ii) Finalement, si on suppose que $\left(\begin{smallmatrix} a & b \\ c & d \end{smallmatrix}\right)\in t^0_{\cD^{ord}}$, alors les relations (\ref{c iso}) et (\ref{non-split}) et la proposition \ref{tg ord} impliquent l'isomorphisme désiré et donc $\dim t^0_{\cD^{ord}}=|I'_F|= \sum_{\substack{\gp_i \in S_p}} f_i . e_i$.

\end{proof}

\subsection{Le cas où $H$ est Galois et diédral sur $\Q$.}\label{M is complex}\

On a sous les hypothèses du Théorème \ref{CM-case} le diagramme suivant

\begin{equation}
\centering
\xymatrix{&H\ar@{-}[d]\\
&F.K=M\ar@{-}[dl]\ar@{-}[dr]\\
 K\ar@{-}[dr]&&F\ar@{-}[dl]\\
  &\Q }
 \end{equation}
  
Le groupe de Galois $\Gal(H/\Q)$ est diédral et les extensions $M/F$, $M/K$ et $K/\Q$ sont quadratiques ($K$ est imaginaire).

La restriction $\Gal(M/\Q)\rightarrow \Gal(K/\Q)$ induit un isomorphisme $\Gal(M/F) \simeq \Gal(K/\Q)$ et donc l'ensemble $S^p$ est vide (puisque $p$ se décompose dans $K$).

On note $G$ le groupe de Galois $\Gal(H/\Q)$. Avec les notations utilisées dans la relation (\ref{log_p}), tout élément de $\Hom ((\cO_{H}\otimes \Z_p)^\times, \bar{\Q}_p)$ est de la forme $$u\otimes 1 \mapsto \sum_{g \in G} h_{g} \log_p(g^{-1}(u\otimes 1))$$ pour $h_{g}\in  \bar \Q_p$. De plus, la $G$-représentation régulière $\bar{\Q}_p[G]=\Hom \left((\Z_p \otimes \cO_{H})^{\times}, \bar{\Q}_p \right)$ est isomorphe à $\bigoplus\limits_{\pi \in \widehat{G}} \pi^{\dim_\pi}$, où $\pi$ parcourt l'ensemble des caractères du groupe $G$. 

D'autre part, on a la décomposition de $G$-modules:

\begin{equation}\label{mutiplicity-}
\rH^1(H, \bar{\Q}_p) \simeq \bigoplus_{\pi} \pi^{m_\pi},
\end{equation}
où $\pi$ parcourt l'ensemble des caractères de $G$. D'après la preuve de Minkowski du théorème des unités de Dirichlet, il existe un isomorphisme de $G$-modules $$\Hom(\cO_{H}^{\times}, \bar{\Q}_p)\simeq \bigoplus\limits_{\pi \in \widehat{G}} \pi ^{\dim \pi^{+}},$$ où $\pi^+$ et  $\pi^-$ sont les sous-espaces propres pour l'action de la conjugaison complexe $c \in G$.

\begin{lemma} \label{multiplicity}Avec les notations ci-dessus, on a:

\begin{enumerate}

\item $m_\pi=\dim \pi^{-}$ pour $\pi \neq 1$ et $m_1=1$.

\item $\dim \pi \leq 2$ pour tout caractère $\pi$ de $G$. De plus, si $\dim \pi =2$, alors $\pi= \Ind^\Q_K \psi$, où $\psi:G_K \rightarrow \bar{\Q}_p^{\times}$ est un caractère. 

\end{enumerate}
\end{lemma}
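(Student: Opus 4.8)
Le plan est de traiter s\'epar\'ement les deux assertions du lemme : la premi\`ere (le calcul des $m_\pi$) repose sur la suite exacte de la th\'eorie du corps de classes et la conjecture de Leopoldt, la seconde (la borne $\dim\pi\le 2$ et la forme induite) sur la th\'eorie des repr\'esentations des groupes di\'edraux.

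Pour (i), je partirais de la suite exacte $(\ref{global-local})$ et je prendrais les composantes $\pi$-isotypiques pour le groupe $G=\Gal(H/\Q)$, l'exactitude \'etant pr\'eserv\'ee car le passage aux composantes isotypiques sur $\bar{\Q}_p$ est exact. La repr\'esentation r\'eguli\`ere $\bar{\Q}_p[G]=\Hom((\cO_H\otimes\Z_p)^\times,\bar{\Q}_p)$ fait appara\^itre $\pi$ avec multiplicit\'e $\dim\pi$, et la d\'ecomposition de Minkowski rappel\'ee ci-dessus fait appara\^itre $\pi$ avec multiplicit\'e $\dim\pi^{+}$ dans $\Hom(\cO_H^\times,\bar{\Q}_p)$ pour $\pi\ne 1$. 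Le point-cl\'e est que, sous la conjecture de Leopoldt pour $H$, le morphisme $\cO_H^\times\otimes\Z_p\hookrightarrow(\cO_H\otimes\Z_p)^\times$ est injectif, donc la derni\`ere fl\`eche de $(\ref{global-local})$ est surjective ; en prenant la composante isotypique de $\pi\ne 1$ on obtient une suite courte exacte et un comptage de dimensions donne $m_\pi=\dim\pi-\dim\pi^{+}=\dim\pi^{-}$. C'est exactement l'argument du lemme $\ref{multiplicity-Mix}$ men\'e au-dessus de la base $\Q$ plut\^ot que $F$, la somme $\sum_i\dim\pi^{-\tau_i}$ se r\'eduisant au seul terme $\dim\pi^{-}$ puisque $H/\Q$ est galoisien et qu'il n'y a donc qu'une seule conjugaison complexe $c\in G$.

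Le caract\`ere trivial se traite \`a part. La droite isotypique triviale de $\Hom((\cO_H\otimes\Z_p)^\times,\bar{\Q}_p)$ est engendr\'ee par $u\mapsto\log_p\mathrm{N}_{H/\Q}(u)$, qui s'annule sur les unit\'es globales (de norme $\pm 1$) ; elle appartient donc au noyau, d'o\`u $m_1\ge 1$. Comme $\cO_H^\times$ ne poss\`ede pas d'invariant sous $G$ non trivial (les unit\'es fix\'ees par $G$ sont des racines de l'unit\'e), la multiplicit\'e du caract\`ere trivial dans $\Hom(\cO_H^\times,\bar{\Q}_p)$ est nulle, et la surjectivit\'e ci-dessus force $m_1=\dim\mathbf 1=1$. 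On peut aussi v\'erifier ce comptage globalement : sous Leopoldt on a $\dim_{\bar{\Q}_p}\rH^1(H,\bar{\Q}_p)=r_2(H)+1$, alors que $\sum_{\pi\ne 1}\dim\pi^{-}\cdot\dim\pi=\dim\bar{\Q}_p[G]^{c=-1}=|G|/2=r_2(H)$, ce qui laisse exactement une dimension pour le caract\`ere trivial.

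Pour (ii), j'utiliserais que $G=\Gal(H/\Q)$ est di\'edral. La th\'eorie des caract\`eres des groupes di\'edraux donne imm\'ediatement $\dim\pi\le 2$, les repr\'esentations de dimension $2$ \'etant exactement les $\Ind_C^G\chi$, o\`u $C$ est l'unique sous-groupe cyclique d'indice $2$ et $\chi$ un caract\`ere de $C$ avec $\chi\ne\chi^{-1}$. Il reste \`a identifier $C=\Gal(H/K)$ parmi les trois sous-corps quadratiques de $M=FK$ : l'image projective de $\rho(G_K)$ est ab\'elienne, donc cyclique, car $\rho_{|G_K}=\xi\oplus\xi^{\sigma}$ est r\'eductible, tandis que $\Gal(H/F)$ reste di\'edral puisque $\rho_{|G_F}=\Ind_M^F\psi$ est induite avec $\psi\ne\psi^\sigma$ ; le sous-groupe cyclique d'indice $2$ est donc bien $\Gal(H/K)$. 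En inflant $\chi$ en un caract\`ere d'ordre fini $\psi\colon G_K\to\bar{\Q}_p^\times$, on obtient $\pi=\Ind_K^\Q\psi$.

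Je m'attends \`a ce que la principale difficult\'e se situe dans la comptabilit\'e de (i) autour du caract\`ere trivial : il faut suivre avec soin le d\'ecalage d'une unit\'e entre $\Hom(\cO_H^\times,\bar{\Q}_p)$ et la repr\'esentation r\'eguli\`ere pour obtenir $m_1=1$ et non $m_1=0$, et s'assurer que la surjectivit\'e de la derni\`ere fl\`eche de $(\ref{global-local})$ sous Leopoldt est bien compatible avec le fait que la composante triviale cible est nulle. L'\'etape (ii) est routini\`ere une fois le bon sous-corps quadratique fix\'e, le seul contenu r\'eel \'etant l'identification du sous-groupe cyclique $C$ avec $\Gal(H/K)$.
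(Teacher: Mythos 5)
Your skeleton for (i) — take $\pi$-isotypic components of the sequence (\ref{global-local}), compare multiplicities in the regular representation $\bar\Q_p[G]$ and in $\Hom(\cO_H^\times,\bar\Q_p)$, and treat the trivial character separately — is exactly the paper's mechanism, and your bookkeeping (including $m_1=1$) is correct. But there is one genuine gap: you prove (i) only \emph{sous la conjecture de Leopoldt pour $H$}, whereas the lemma is stated, and must be proved, unconditionally. The hypotheses of Th\'eor\`eme \ref{CM-case}, unlike those of Th\'eor\`eme \ref{main-thm-RM}, contain no Leopoldt assumption; the whole point of this dihedral-over-$\Q$ subsection is that the \'etaleness of $\kappa$ at $x$ comes out unconditional, and your version would propagate a Leopoldt hypothesis into Th\'eor\`eme \ref{dimension}, Corollaire \ref{CM tg} and Th\'eor\`eme \ref{CM-case}. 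The missing ingredient is the theorem of Ax--Baker--Brumer \cite{brumer}: in the present setting $\Gal(H/K)$ is cyclic, so $H$ is an \emph{abelian} extension of the \emph{imaginary quadratic} field $K$, and Brumer's theorem then gives Leopoldt for $H$ as a theorem, not a hypothesis. Once this is inserted, the rest of your argument (injectivity of $\cO_H^\times\otimes\Z_p\rightarrow(\cO_H\otimes\Z_p)^\times$, hence surjectivity of the last arrow of (\ref{global-local}), hence $m_\pi=\dim\pi-\dim\pi^{+}=\dim\pi^{-}$ for $\pi\neq 1$) coincides with the paper's proof; your discussion of the trivial character is more detailed than the paper's and is fine.

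On (ii), a minor but real imprecision: the expression ``$\rho_{|G_K}=\xi\oplus\xi^{\sigma}$'' is not meaningful, since $\rho$ is a representation of $G_F$ and $G_K\not\subset G_F$. The representation whose restriction to $G_K$ is $\xi\oplus\xi^{\sigma}$ is $\Ind^{\Q}_K\xi$ (which restricts to $\rho$ on $G_F$ by Mackey); alternatively, one can avoid any such argument by quoting the setup of this subsection, where $H$ lies inside the compositum of $M=FK$ with the field cut out by $\ker(\xi/\xi^{\sigma})$, both abelian over $K$, so that $\Gal(H/K)$ is abelian, and the paper records that it is cyclic in the proof of Th\'eor\`eme \ref{dimension}. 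Granting that identification, your appeal to the character theory of dihedral groups (every irreducible of dimension $2$ restricts to $\chi\oplus\chi^{-1}$ on the cyclic index-$2$ subgroup, hence is induced) is precisely what the paper compresses into ``l'assertion d\'ecoule du fait que $G$ est di\'edral''.
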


\begin{proof}\
(i) Puisque $K$ est un corps quadratique imaginaire et $H$ est une extension abélienne de $K$, la conjecture de Leopoldt est vraie pour $H$ d'après Ax-Baker-Brumer \cite{brumer}. Par conséquent, le dernier morphisme de la suite exacte (\ref{global-local}) est surjectif et donc $m_{\pi}=\dim \pi^{-}$. 

(ii) L'assertion découle immédiatement du fait que $G$ est un groupe diédral.  
\end{proof}

\begin{theorem}\label{dimension}On a:

\begin{enumerate}

\item $\dim \rH^1(M,\psi_{_{\heartsuit}}^{-1})=[F:\Q]$. 

\item  $\ker \left (\rH^1(M,\psi_{_{\heartsuit}}^{-1}) \rightarrow \underset{\gp_i \mid p}{\oplus} \rH^1(M_{v_i},\psi_{_{\heartsuit}}^{-1})) \right)$ est trivial.

\end{enumerate}

\end{theorem}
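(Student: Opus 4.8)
Le plan comporte deux volets ind\'ependants. Pour (i), je pars de l'isomorphisme (\ref{iso-inf}), qui identifie $\rH^1(M,\psim^{-1})$ au $\psim^{-1}$-espace propre de $\rH^1(H,\bar{\Q}_p)$ sous $\Gal(H/M)$. Comme $M$ est totalement complexe de degr\'e $2[F:\Q]$, il poss\`ede $2r=2[F:\Q]$ plongements complexes, d'o\`u $r=[F:\Q]$; de plus l'hypoth\`ese $(\psi/\psi^{\sigma})^2$ non trivial du Th\'eor\`eme \ref{CM-case} signifie $\psim^2\neq 1$, si bien que $\pi=\Ind_M^F\psim^{-1}$ est irr\'eductible. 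La Proposition \ref{dimension neither nor} fournit alors la minoration $[F:\Q]=2[F:\Q]-r\leq\dim\rH^1(M,\psim^{-1})$. Pour l'\'egalit\'e, j'utilise que $H$ est ab\'elien sur le corps quadratique imaginaire $K$, de sorte que la conjecture de Leopoldt est vraie pour $H$ par Ax--Baker--Brumer (Lemme \ref{multiplicity}(i)): l'in\'egalit\'e du Lemme \ref{multiplicity-Mix} devient une \'egalit\'e, et comme chaque conjugaison complexe $\tau_i$ se prolonge en une place complexe de $M$ (donc $\dim\pi^{-\tau_i}=1$), on obtient $\dim\rH^1(M,\psim^{-1})=m_\pi=\sum_{i=1}^{[F:\Q]}\dim\pi^{-\tau_i}=[F:\Q]$.

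Pour (ii), je compare d'abord les dimensions. Puisque $S^p=\varnothing$, tout $\gp_i\mid p$ se d\'ecompose dans $M$ en $v_i$ et $v_i^{\sigma}$, et le Lemme \ref{iso loc} donne $\dim\rH^1(M_{v_i},\psim^{-1})=e_if_i$, donc $\sum_{\gp_i\mid p}\dim\rH^1(M_{v_i},\psim^{-1})=\sum_{\gp_i\mid p}e_if_i=[F:\Q]$. D'apr\`es (i), la source et le but de la localisation ont la m\^eme dimension $[F:\Q]$: il suffit de montrer que la localisation est injective. Pour cela, j'applique la suite exacte (\ref{global-local}) et l'isomorphisme (\ref{iso-inf}) en passant aux $\psim^{-1}$-espaces propres sous $\Gal(H/M)$, la conjecture de Leopoldt pour $H$ garantissant la surjectivit\'e du morphisme de droite. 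Le terme m\'edian $\Hom(\prod_{w\mid p}U^1_w,\bar{\Q}_p)[\psim^{-1}]$ se scinde en $V\oplus V^{\sigma}$ suivant les places de $H$ situ\'ees au-dessus des $v_i$ ou des $v_i^{\sigma}$, chaque facteur \'etant de dimension $[F:\Q]$ par le Lemme \ref{iso loc}. La localisation de l'\'enonc\'e est la compos\'ee de l'inclusion $\rH^1(M,\psim^{-1})\hookrightarrow V\oplus V^{\sigma}$ avec la projection sur $V$; son noyau vaut donc $\rH^1(M,\psim^{-1})\cap V^{\sigma}$, c'est-\`a-dire le noyau de la restriction $V^{\sigma}\to\Hom(\cO_H^{\times},\bar{\Q}_p)[\psim^{-1}]$.

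L'annulation de ce noyau \'equivaut, par dualit\'e, \`a l'injectivit\'e de la localisation des unit\'es globales pertinentes $(\cO_H^{\times}\otimes\bar{\Q}_p)[\psim]\to\prod_{w\mid v_i^{\sigma}}\cO_{H_w}^{\times}\otimes\bar{\Q}_p$. C'est le point crucial: on ne localise qu'en la moiti\'e des places au-dessus de $p$, de sorte que la conjecture de Leopoldt pour $H$ seule --- qui assure l'injectivit\'e en \emph{toutes} les places --- ne suffit pas formellement, d'autant qu'aucun \'el\'ement de $\Gal(\bar{\Q}/F)$ n'\'echange $v_i$ et $v_i^{\sigma}$ tout en pr\'eservant le $\psim$-espace propre. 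Pour contourner cette difficult\'e, je descends au corps quadratique imaginaire $K$: un calcul direct montre que $\psim^{-1}=\psi^{\sigma}/\psi$ est la restriction \`a $G_M$ du caract\`ere $\eta:=\xi^c/\xi$ de $G_K$ (on utilise que $c$ et $\sigma$ co\"incident sur $M$ et que $\xi$ est un caract\`ere), d'o\`u $\eta^c=\eta^{-1}$. Le lemme de Shapiro fournit alors une d\'ecomposition, compatible \`a la localisation, $\rH^1(M,\psim^{-1})\simeq\rH^1(K,\eta)\oplus\rH^1(K,\eta\varepsilon)$, o\`u $\varepsilon$ d\'esigne le caract\`ere quadratique de $M/K$. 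On est ainsi ramen\'e, pour chaque facteur $\rH^1(K,\theta)$, \`a l'injectivit\'e de $\rH^1(K,\theta)\to\rH^1(K_{\mathfrak p_i},\theta)$, o\`u $\mathfrak p_i$ est le premier de $K$ sous le $v_i$ choisi.

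Sur $K$ le groupe $\cO_K^{\times}$ est fini et la conjugaison complexe \'echange les deux premiers $\mathfrak p_i,\bar{\mathfrak p}_i$ au-dessus de $p$ en envoyant $\eta$ sur $\eta^{-1}$. Une classe non nulle du noyau serait non ramifi\'ee hors de $\bar{\mathfrak p}_i$ et scind\'ee en $\mathfrak p_i$; la finitude du groupe des classes de $K$ interdit qu'elle soit partout non ramifi\'ee, et il reste \`a exclure qu'elle soit ramifi\'ee uniquement en $\bar{\mathfrak p}_i$. La $p$-r\'egularit\'e (qui rend $\psim$ non trivial en $v_i$) et le choix de la $p$-stabilisation --- celle pour laquelle $\psi^{\sigma}(\Frob_{v_i})=\alpha_i$, donc fixant le premier $\mathfrak p_i$ en lequel on localise --- permettent, coupl\'es \`a Leopoldt pour $H$ et \`a la sym\'etrie $\eta\mapsto\eta^{-1}$ de la conjugaison complexe, de montrer que la localisation de la classe g\'en\'eratrice en $\mathfrak p_i$ est non nulle, d'o\`u l'injectivit\'e cherch\'ee et l'annulation du noyau. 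Je m'attends \`a ce que cette derni\`ere \'etape --- le contr\^ole de l'injectivit\'e en une seule des deux places au-dessus de $p$, ramen\'e en substance \`a une non-annulation $p$-adique de type Leopoldt/Baker--Brumer d\'ependant du choix de la $p$-stabilisation, dans l'esprit de \cite{D-B} --- soit l'obstacle principal de la preuve.
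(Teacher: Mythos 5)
Your proof of part (i) is correct and is essentially a repackaging of the paper's argument: where the paper descends to $\Q$, writes the two extensions $\psi_1,\psi_2$ of $\psim^{-1}$ to $G_K$ and uses the dihedral structure (Lemme \ref{multiplicity}) to get multiplicity one for each $\Ind_K^{\Q}\psi_i$, you invoke Proposition \ref{dimension neither nor} for the lower bound and the equality case of Lemme \ref{multiplicity-Mix}. Both routes rest on the same input, namely Leopoldt for $H$ via Ax--Baker--Brumer, and both are valid.

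Part (ii), however, contains a genuine gap, and it sits exactly at the point you yourself flag as ``l'obstacle principal''. Your reductions are sound: the dimension count, the identification of the kernel with classes supported on the $v^\sigma$-side of the semi-local units, and the Shapiro descent to $K$ (a nice alternative framing of the problem); and you are right that Leopoldt for $H$ alone does not formally give injectivity of localization at only half of the places above $p$. But the last step --- excluding a class ramified only at $\bar{\mathfrak{p}}$ and locally trivial at $\mathfrak{p}$ --- \emph{is} the content of the theorem, and your proposal replaces it by the assertion that $p$-regularity, the choice of $p$-stabilization, Leopoldt for $H$ and the symmetry $\eta\mapsto\eta^{-1}$ ``permettent de montrer'' the required non-vanishing; no argument is given, and in fact neither $p$-regularity nor the choice of $p$-stabilization enters the paper's proof of this statement at all. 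What the paper actually uses to close this gap is a Minkowski-type lemma: any element of $\Hom\bigl(\prod_{w\mid v^{\sigma}}\cO_{H,w}^{\times},\bar\Q_p\bigr)$ that vanishes on the diagonal image of the global units $\cO_{H}^{\times}$ must factor through the norm $N_{H/K}$; such an element is $\Gal(H/K)$-invariant, hence cannot lie in the $\psi_i$-eigenspace since $\psi_i\neq 1$. This forces the coefficients $a^i_1$ and $a^i_c$ of the generators $f_i$ of $\rH^1(G_H,\bar\Q_p)[\psi_i]$ to be nonzero, and a Vandermonde determinant argument then shows that no nonzero combination of $f_1,f_2$ can vanish on all decomposition groups above $p$; Lemme \ref{Rest-Inflation} concludes. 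Without this norm-factorization input (or an equivalent substitute), your proof of (ii) is incomplete.
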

\begin{proof}\ 

(i) Nos hypothèses impliquent que $\Gal(H/K)$ est cyclique et que $\Gal(H/M)$ est un sous-groupe de $\Gal(H/K)$ d'indice égal à $[F:\Q]$  (où $[F:\Q]=2$), donc il existe $2$ caractères $(\psi_i)_{1\leq i \leq 2}: G_K \rightarrow \bar{\Q}_p$ qui prolongent  $\psi_{_{\heartsuit}}^{-1}$ à $\Gal(H/K)$. De plus, le lemme \ref{multiplicity} implique que $(\Ind^{\Q}_{K} \psi_i)_{\{1\leq i\leq 2\}}$ sont les représentations de $\Gal(H/\Q)$ qui prolonge la $\Gal(H/F)$-représentation $\Ind^{F}_{M}\psi_{_{\heartsuit}}^{-1}$ à $\Gal(H/\Q)$. 

Puisque $\det \Ind^{\Q}_{K} \psi_i(c)=-1$, le lemme \ref{multiplicity} implique que la représentation irréductible $\Ind^{K}_{\Q} \psi_i$ apparaît avec une multiplicité égale à $1$ dans la décomposition de $G$-modules (\ref{mutiplicity-}). Par conséquent, la multiplicité de $\Ind^{F}_{M}\psi_{_{\heartsuit}}^{-1}$ dans la $\Gal(H/F)$-représentation $\rH^{1}(G_H, \bar{\Q}_p)$ est égale à $2$, et puisque $\pi'=\Ind^{F}_M(\psi_{_{\heartsuit}}^{-1})$ est irréductible, alors
$$\dim \rH^1(H, \bar{\Q}_p)[\psi_{_{\heartsuit}}^{-1}]= \dim \Hom_{\Gal(H/F)}\left(\pi', \rH^1(H, \bar{\Q}_p)\right)=2$$

Ainsi, le résultat désiré découle de l'isomorphisme (\ref{iso-inf}).

(ii) D'après le lemme \ref{multiplicity} $$\dim \Hom_{G}(\Ind^{\Q}_{K} \psi_i, \rH^1(G_{H}, \bar{\Q}_p))=\dim \rH^{1}(G_H, \bar{\Q}_p)[\psi_i]=1 \text{ pour $1\leq i \leq 2$}$$ 
Maintenant, pour tout  $1\leq i \leq 2$, choisissons un générateur $f_i$ de $\rH^1(G_{H}, \bar{\Q}_p)[\psi_i]$. On peut voir que les $1$-cocycles $(f_i)_{1\leq i \leq 2}$ sont des éléments du groupe de cohomologie $\rH^1(H, \bar{\Q}_p)[\psi_{_{\heartsuit}}^{-1}]$ (puisque $\psi_i$ prolonge $\psi_{_{\heartsuit}}^{-1}$ à $\Gal(H/K))$. 

D'autre part, puisque $f_i \in \Hom ((\cO_{H} \otimes \Z_p)^\times, \bar{\Q}_p)$, alors $f_i$ est de la forme suivante: $$u \otimes 1 \mapsto  \sum_{g \in \Gal(H/\Q)} a^{i}_g \log_p(g^{-1}(u)).$$

Or $f_i$ est un élément du $\psi_i$-espace propre $\Hom ((\cO_{H} \otimes \Z_p)^\times, \bar{\Q}_p)$, alors:

$$a^{i}_{g} = \psi^{-1}_i(g) a^{i}_{1}   \text{   et   } a^{i}_{c g}= \psi^{-1}_i(g) a^{i}_{c} \text{  pour tout $g \in \Gal(H/K)$}  $$

Soit $w_0$ (resp. $v$) la place première de $H$ (resp. $K$) au dessus de $p$ induite par $\iota_p$. Si l'image de $f_i$ dans $\Hom(\cO_{H_{w_0}}^\times, \bar \Q_p)$ est triviale, par la preuve de Minkowski du théorème des unités de Dirichlet, on sait que tout élément de $\Hom(\prod_{{w} |v^{\sigma}} \cO_{H,{w}}^\times, \bar \Q_p)$ qui est trivial sur les unités globales doit se factoriser à travers la norme, donc $f_i$ n'appartient pas au $\psi_i$-espace propre. Par conséquent l'espace vectoriel 

$$\ker(\rH^1(G_{H}, \bar{\Q}_p)[\psi_i] \rightarrow \rH^1(G_{H_{w_{0}}}, \bar{\Q}_p))$$ est trivial et de là en déduit que $a^{i}_1$ et $a^{i}_c$ sont non nuls, puisque $\sigma$ échange les groupes de décomposition $G_{H_{w_0}}$ et $G_{H_{\sigma(w_0)}}$ ($\sigma=c$). 

Si $t_0$ est un générateur du groupe cyclique $\Gal(H/K)$ et $n'$ le cardinal de $\Gal(H/K)$, alors pour tout $1\leq i \leq 2$ on a
$$f_i =a^i_1 \sum_{0 \leq j \leq n'-1} \psi^{-1}_i(t_o^j) \log_p(t_0^{-j}(.)) + a^i_c \sum_{0 \leq j \leq n'-1} \psi^{-1}_i(t_o^j) \log_p(t_0^{-j}c(.)).$$ 

Par conséquent, en écrivant les $1$-cocycles $\{f_i\}_{1\leq i \leq 2}$ dans la base $\{\log_p(g^{-1}(.))_{g \in G}\}$, on peut extraire une sous-matrice de Vandermonde $A=(\psi^{-1}_i(t_0^{j-1}))_{\{1\leq i,j\leq 2\}}$. Puisque $\psi_i \ne \psi_j$ pour $ i\ne j$, alors $\det A \ne 0$. Donc, les $1$-cocycles $\{f_i\}_{\{1\leq i \leq 2\}}$ forment une base de $\rH^1(H, \bar{\Q}_p)[\psi_{_{\heartsuit}}^{-1}]$ et puisque $a^{i}_1$ et $a^{i}_c$ sont non nuls, on ne peut pas trouver une combinaison linéaire des $f_i$ ($1\leq i \leq 2$) qui est triviale sur tous les groupes de décomposition de $G_H$ aux places $\{w_i\}_{\gp_i \in S_p}$, donc le $\bar{\Q}_p$-espace vectoriel  $$\ker \left (\Hom(G_H,\bar{\Q}_p)[\psi_{_{\heartsuit}}^{-1}] \rightarrow \underset{\gp_i \mid p}{\oplus} \Hom(G_{H_{w_i}},\bar{\Q}_p) \right)$$
est trivial. Finalement, on peut conclure grâce au lemme \ref{Rest-Inflation}.

\end{proof}

\begin{cor}\label{CM tg} Sous les hypothèses du Théorème \ref{CM-case}, on a:

\begin{enumerate}

\item L'espace vectoriel $t^{0}_{\cD^{ord}}$ est trivial.

\item La dimension de l'espace tangent $t_{\cD}$ est $[F:\Q]+1$.

\end{enumerate}

\end{cor}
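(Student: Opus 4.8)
Le plan est de tout ramener au Th\'eor\`eme \ref{dimension}, en exploitant que sous les hypoth\`eses du Th\'eor\`eme \ref{CM-case} l'ensemble $S^p$ est vide, que $M$ est un corps CM de degr\'e $4$ (donc $r_2(M)=2$) et que la conjecture de Leopoldt y est v\'erifi\'ee, puisque $M/K$ est ab\'elienne avec $K$ quadratique imaginaire (Ax--Baker--Brumer). J'\'ecrirais un \'el\'ement de l'espace tangent comme un $1$-cocycle $\left(\begin{smallmatrix} a & b \\ c & d \end{smallmatrix}\right)$ de $\rH^1(M,\ad\rho)^{\Gal(M/F)}$ dans la base $(e_1,e_2)$; d'apr\`es le corollaire \ref{in tg}, il est enti\`erement d\'etermin\'e par le couple $(a,c)$ avec $a\in\Hom(G_M,\bar\Q_p)$, $c\in\rH^1(M,\psi_{_{\heartsuit}}^{-1})$, $d=a^\sigma$ et $b=c^\sigma$.

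Pour (2), je remarquerais que le probl\`eme quasi-ordinaire $\cD$ n'impose localement que l'annulation du terme inf\'erieur gauche (la seule condition $C^*_i$ du lemme \ref{lemmatd}(iii), sans contrainte de non-ramification sur la diagonale). La partie diagonale $a$ reste donc libre dans $\Hom(G_M,\bar\Q_p)$, de dimension $r_2(M)+1=3$ d'apr\`es Leopoldt. Quant \`a $c$, la condition $C^*_i$ impose $c_{v_i}=0$ si $\gp_i\in I$ et $c_{v_i^\sigma}=0$ si $\gp_i\in I'$ (car $b=c^\sigma$ et $\sigma$ \'echange $v_i$ et $v_i^\sigma$); comme $\sum_{\gp_i\mid p}e_if_i=[F:\Q]=\dim\rH^1(M,\psi_{_{\heartsuit}}^{-1})$, le Th\'eor\`eme \ref{dimension}(ii) force $c=0$. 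On obtient $t_{\cD}\simeq\Hom(G_M,\bar\Q_p)$, d'o\`u $\dim t_{\cD}=[F:\Q]+1$.

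Pour (1), je partirais d'un cocycle de $t^0_{\cD^{ord}}\subset t_{\cD}$. La trace nulle donne $d=-a$, et avec $a=d^\sigma$ on obtient $a^\sigma=-a$. La condition $C^*_i$ annule $c$ comme ci-dessus. La condition suppl\'ementaire $D^*_i$ (non-ramification de la composante $\psi_i''/\psi_i''$) rend $a$ non ramifi\'e en chaque place canonique $v_i$, et $a^\sigma=-a$ propage cette non-ramification \`a $v_i^\sigma$; ainsi $a$ est non ramifi\'e en toutes les places au-dessus de $p$. \'Etant d\'ej\`a non ramifi\'e en dehors de $p$, $a$ se factorise par le $p$-corps de classes de Hilbert de $M$, qui est fini, donc $a=0$ et $t^0_{\cD^{ord}}$ est trivial.

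L'obstacle principal est de justifier l'annulation de $c$ aux places conjugu\'ees $v_i^\sigma$ lorsque $I'\neq\varnothing$, alors que le Th\'eor\`eme \ref{dimension}(ii) est \'enonc\'e pour les places canoniques. Je le l\`everais en reprenant sa preuve : les g\'en\'erateurs $f_1,f_2$ y ont des coefficients $a^i_1$ et $a^i_c$ tous deux non nuls, de sorte que la restriction \`a l'une ou l'autre des deux places d'une paire $\{v_i,v_i^\sigma\}$ demeure injective sur l'espace propre concern\'e; l'argument de Vandermonde vaut alors pour tout choix d'une place par $\gp_i\mid p$, ce qui assure $c=0$ dans la configuration mixte.
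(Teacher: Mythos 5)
Votre preuve est correcte et suit essentiellement la m\^eme d\'emarche que celle du papier : annulation de $c$ via le Th\'eor\`eme \ref{dimension}(ii) apr\`es traduction des conditions locales $C^*_i$ aux places $v_i$ ($\gp_i\in I$) et $v_i^{\sigma}$ ($\gp_i\in I'$), annulation de $a$ dans le cas de trace nulle par non-ramification partout et finitude du groupe des classes, puis $\dim \Hom(G_M,\bar{\Q}_p)=[F:\Q]+1$ par Leopoldt (Ax--Baker--Brumer). Votre traitement explicite de la subtilit\'e $v_i$ contre $v_i^{\sigma}$ est exactement ce que la preuve du Th\'eor\`eme \ref{dimension}(ii) fournit implicitement (les coefficients $a^i_1$ et $a^i_c$ y sont tous deux non nuls, de sorte que l'argument de Vandermonde vaut pour tout choix d'une place par $\gp_i\mid p$), subtilit\'e que le papier passe sous silence en citant directement ce th\'eor\`eme.
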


\begin{proof}\ 

(i) Puisque $S^p$ est vide, alors l'inclusion (\ref{CM relative}) devient un isomorphisme car $a=d^{\sigma}=-d \in \Hom(G_M,\bar{\Q}_p)$ et donc c'est un morphisme non ramifié en les premiers au dessus de $p$, ainsi $a=0$. Donc l'assertion découle immédiatement de la relation (\ref{CM relative}) et du Théorème \ref{dimension}.

(ii) D'après l'assertion (i), il existe un isomorphisme $t_{\cD} \simeq  \Hom(G_M, \bar{\Q}_p)$. D'autre part, puisque $M$ est une extension totalement complexe de $\Q $ de degré $2[F:\Q]$ et que la conjecture de Leopoldt est vraie pour $H$, $M$ possède trois $\Z_p$-extensions indépendantes et $\dim \Hom(G_M, \bar{\Q}_p)=[F:\Q]+1$.

\end{proof}

\section{Preuves des isomorphismes $\cR \simeq \cT$}\label{main}\

On va démontrer dans cette section les théorèmes \ref{main-thm-RM}, \ref{CM-case} et \ref{thm M mixt}. 

La variété rigide de Hecke-Hilbert $\cE$ est réduite et équidimensionelle de dimension $[F:\Q]+1$, donc $\cE$ est lisse en $x$ si, et seulement si, l'espace tangent de $\cT$ est de dimension $[F:\Q]+1$. De plus, le morphisme $\kappa$ est étale en $x$ si, et seulement si, l'anneau local $\cT'$ de la fibre de $\kappa(x)$ en $x$ est isomorphe à $\bar{\Q}_p$.

\subsection{Le lieu quasi-ordinaire de la variété de Hecke-Hilbert $\cE$}\

Dans la proposition suivante, on montre qu'on a une déformation quasi-ordinaire de $\rho$ en $p$ à valeurs dans $\cT$.

\begin{prop} \label{Deformation to T}\
\begin{enumerate}

\item Il existe une représentation continue $$\rho_{\cT} : G_{F,\mathfrak{n} p} \rightarrow \GL_2(\cT),$$  telle que $\Tr \rho_{\cT}(\Frob_\ell) = T_\ell$ pout tout $\ell \nmid \mathfrak{n} p$. 
La réduction de $\rho_{\cT}$ modulo $\gm_{\cT}$  est isomorphe à $\rho$. 

\item La déformation $\rho_{\cT}$ est quasi-ordinaire en $p$.

\end{enumerate}
\end{prop}

\begin{proof}\

(i)Même preuve que la proposition \cite[6.1]{D-B}.

(ii)D'après \cite[6.3.6]{chenevier-thesis}, il existe un ouvert affinoïde de $\Omega$ de $\cE$ contenant $x$ tel que l'application $z \rightarrow |U_p(z)|_p$ est constante et égale à $1$ sur $\Omega(\C_p)$, où $w(\Omega)$ est un ouvert affinoïde de $\cW_F$, $\kappa: \Omega \rightarrow w(\Omega)$ est fini et de restriction surjective sur chaque composante irréductible de $\Omega$. Puisque le lieu quasi-ordinaire  $\cE^{n.ord}$ de $\cE$ est la fibre générique de l'algèbre de Hecke quasi-ordinaire en $p$ induite par les opérateurs de Hecke $T_{\ell}$ et $U_{\gp_i}$ où $\gp_i \mid p$ et $\ell \nmid p\gn$ et que l'anneau $\cT$ est equidimensionnel, alors les idéaux minimaux de $\cT$ correspondent aux familles de Hida quasi-ordinaires en $p$.

Soient $V_{\cT}$ le $\cT$-module de rang $2$ sur lequel $G_F$ agit par $\rho_{\cT}$ et $(K_i)_{1\leq i \leq r}$ la famille des corps obtenus par la localisation de $\cT$ en ses idéaux minimaux et posons $V_i=V\otimes_{\cT}K_i$ ($1\leq i \leq r$). $V_i$ est un $K_i[G_{F,\gn p}]$-module et est la représentation galoisienne d'une famille de Hida quasi-ordinaire en $p$ qui se spécialise en $f$. D'après un résultat de \cite{hida89}, pour tout $\mathfrak{p}_k \mid p$
il existe une suite exacte courte de $K_i[G_{F_{\gp_{k}}}]$-modules: 
\begin{equation*}
0\rightarrow  V_i^+\rightarrow  V_i \rightarrow  V_i^-\rightarrow 0,
\end{equation*}
où $V_i^-$ est une droite sur laquelle $G_{F_{\gp_{k}}}$ agit par le caractère $\delta_{\gp_{k}}$ qui envoie $[y, F_{\gp_{k}}]$ sur l'opérateur de Hecke $T(y)$, où $[.,F_{\gp_{k}}]:\widehat{F_{\gp_{k}}^{\times}} \rightarrow G^{ab}_{F_{\gp_{k}}}$ est le symbol local d'Artin. Puisque $\rho$ est $p$-régulière et $\delta_{\gp_{k}}$ relève le caractère $\psi''_k$, on peut conclure par le même argument que celui utilisé dans la proposition \cite[6.1]{D-B} que $\rho_\cT$ est quasi-ordinaire en $p$.

\end{proof}

\subsection{Surjection de $\cR$ sur $\cT$}\

D'après la proposition \ref{Deformation to T}, la représentation $\rho_{\cT}$ induit un morphisme local continu de $\bar \Q_p$-algèbres
\begin{equation}\label{nearly-ord univ}
\cR  \rightarrow \cT.
\end{equation}

Soit $A$ un anneau local Artinien d'idéal maximal $\gm_A$ et de corps résiduel $A/\gm_A=\bar \Q_p$. Toute déformation de $\det(\rho)$ (resp. $(\det \rho,(\psi_{i}'')_{\gp_i \mid p}))$ à valeurs dans $A^\times$ est équivalente à un morphisme continu de $G_{F, \mathfrak{n}p}$  (resp. $G_{F, \mathfrak{n}p} \times (\mathfrak{o} \otimes \Z_p)^{\times}$) vers $1+\gm_A$. 

D'autre part, la restriction de la déformation universelle $\rho_{\cR}$ à $G_{F_{\gp_i}}$ pour tout $\gp_i \mid p$ est une extension d'un caractère $\psi_{i,\cR}''$ qui relève $\psi_{i}''$ par un caractère $\psi_{i,\cR}'$ qui relève $\psi_{i}'$. D'après la théorie du corps de classes et puisque $1+\gm_A$ ne contient pas d'élément d'ordre fini, l'anneau universel qui représente les déformations de $\det \rho$ (resp. $\det \rho \times (\psi_{i}'')_{\{\gp_i \mid p \}}$) est donné par $\bar\Q_p[[1+p\Z_p]]$ (resp. $\bar\Q_p[[T_1,T_2, ...., T_{n+1}]]$). De plus, $\cR^{ord} $ (resp. $\cR $) a une structure de $\bar \Q_p[[1+p\Z_p]] $-algèbre (resp. $\bar\Q_p[[T_1,T_2, ...., T_{n+1}]]$-algèbre) donnée par la déformation $\det \rho_{\cR^{ord}}$ (resp. $(\det \rho_{\cR}, (\psi_{i,\cR}'')_{\{\gp_i \mid p \}})$) de $\det \rho$ (resp. $(\det \rho,(\psi_{i}'')_{\gp_i \mid p})$).
 
L'action de $\Lambda$ sur l'anneau $\cT$ donnée par le morphisme poids $\kappa$ est compatible avec l'action induite par les caractères $\det \rho_{\cT}$ et ceux donnés par l'image de $\psi_{i,\cR}''$ via le morphisme (\ref{nearly-ord univ}) (voir \cite{hida89}). Ainsi, le morphisme (\ref{nearly-ord univ}) est $\Lambda$-linéaire et le diagramme suivant est commutatif:

 $$\xymatrix{  \bar \Q_p[[T_1,T_2,...,T_{n+1}]]    \ar@{->>}[d] \ar@{->}[r] & \cR \ar@{->}[d] \\
\Lambda  \ar@{->}[r] &\cT.}$$

\begin{lemma} \label{Compatibility } Le morphisme naturel $\bar \Q_p[[T_1,T_2,...,T_{n+1}]] \rightarrow \Lambda$ est un isomorphisme.

\end{lemma}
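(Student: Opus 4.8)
The plan is to prove that $\phi\colon\bar{\Q}_p[[T_1,\dots,T_{n+1}]]\to\Lambda$ (the left vertical map of the square, with $n=[F:\Q]$) is an isomorphism by showing that it is a surjection between two complete regular local $\bar{\Q}_p$-algebras of the same Krull dimension $n+1$, and then invoking the elementary fact that such a surjection is automatically injective.

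First I would pin down $\Lambda$. By definition $\cW_F$ is the rigid space representing the continuous characters of $\Z_p^{\times}\times\Res_{\Q}^F\mathbb{G}_m(\Z_p)=\Z_p^{\times}\times(\go\otimes\Z_p)^{\times}$, so its completed local ring at any $\bar{\Q}_p$-point is the completed group algebra over $\bar{\Q}_p$ of the maximal torsion-free pro-$p$ quotient of that group. Since $(\go\otimes\Z_p)^{\times}=\prod_{\gp_i\mid p}\cO_{F_{\gp_i}}^{\times}$ and the pro-$p$ part of $\cO_{F_{\gp_i}}^{\times}$ is $\Z_p$-free of rank $[F_{\gp_i}:\Q_p]=e_if_i$, while that of $\Z_p^{\times}$ is free of rank $1$, this quotient is $\Z_p$-free of rank $1+\sum_{\gp_i\mid p}e_if_i=1+[F:\Q]=n+1$. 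Hence $\Lambda\simeq\bar{\Q}_p[[S_1,\dots,S_{n+1}]]$ is a regular local domain of dimension $n+1$, exactly as is the source $\bar{\Q}_p[[T_1,\dots,T_{n+1}]]$, which by the preceding discussion is the universal ring for deformations of $(\det\rho,(\psi_{i}'')_{\gp_i\mid p})$.

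Next I would justify the surjectivity of $\phi$ that is recorded in the square. By the topological Nakayama lemma it is enough to see that the induced map on cotangent spaces $\gm/\gm^2\to\gm_{\Lambda}/\gm_{\Lambda}^{2}$ is surjective, and since both spaces have dimension $n+1$ over $\bar{\Q}_p$ this says precisely that every weight direction is already seen by the pair $(\det\rho,(\psi_{i}''))$. Concretely, the $\Z_p^{\times}$-component of the weight is read off from $\det\rho$, whereas for each $\gp_i\mid p$ the $\cO_{F_{\gp_i}}^{\times}$-component is read off from the restriction to inertia of $\psi_{i}'=\det\rho\cdot(\psi_{i}'')^{-1}$ through the local reciprocity isomorphism $I_{\gp_i}^{\mathrm{ab}}\simeq\cO_{F_{\gp_i}}^{\times}$; the compatibility of the weight morphism $\kappa$ with these characters (see \cite{hida89}) makes $\phi$ the comparison of the two presentations, hence surjective.

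Finally the conclusion is formal: $\phi$ is a surjection of complete Noetherian local $\bar{\Q}_p$-algebras with $\dim\bar{\Q}_p[[T_1,\dots,T_{n+1}]]=\dim\Lambda=n+1$, and $\Lambda$ is a domain, so $\ker\phi$ is a prime ideal of the regular (hence catenary) domain $\bar{\Q}_p[[T_1,\dots,T_{n+1}]]$ with quotient of dimension $n+1$; therefore $\ker\phi$ is of height $0$ and equals $(0)$, so $\phi$ is injective and thus an isomorphism. I expect the one genuinely delicate point to be the surjectivity step, i.e.\ the class field theory bookkeeping showing that the determinant together with the quasi-ordinary characters $(\psi_{i}'')$ captures all $n+1$ weight coordinates without redundancy; once that cotangent computation is in place the dimension argument closes the proof at once.
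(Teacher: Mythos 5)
Your proof is correct and takes essentially the same route as the paper: the paper's entire argument is the observation that $\Lambda$ is equidimensional of dimension $n+1$, so the given surjection from the domain $\bar{\Q}_p[[T_1,\dots,T_{n+1}]]$ must have trivial kernel and is therefore an isomorphism of regular local rings — exactly your final paragraph. Your cotangent-space/class-field-theory verification of surjectivity is extra detail that the paper absorbs into the preceding construction (the left vertical arrow of its commutative diagram is surjective by construction), not a different method.
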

\begin{proof}\ 

Puisque l'anneau $\Lambda$ est équidimensionnel de dimension $n+1$, la surjection $$\bar \Q_p[[T_1,T_2,...,T_{n+1}]]\rightarrow \Lambda$$ est un isomorphisme d'anneaux réguliers. 
  \end{proof}

\begin{prop}\label{prop-surj}
Le morphisme (\ref{nearly-ord univ}) est surjectif. 
\end{prop}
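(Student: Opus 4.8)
The plan is to show that the image of the map $\cR\rightarrow\cT$ of (\ref{nearly-ord univ}) contains a set of topological generators of $\cT$ as a $\Lambda$-algebra. Recall this map is the one classifying the quasi-ordinary deformation $\rho_{\cT}$ of Proposition \ref{Deformation to T} by the universal property of $\cR$. Since $\kappa$ is localement fini, $\cT$ is a finite $\Lambda$-algebra; by the construction of $\cE$ on the quasi-ordinary locus $\cE^{n.ord}$ (Buzzard's eigenvariety machine, \cite{buzzard} and \cite{Pilloni}) the ring $\cT$ is topologically generated over $\Lambda$ by the Hecke operators $T_\ell$ ($\ell\nmid\gn p$) and $U_{\gp_i}$ ($\gp_i\mid p$). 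It therefore suffices to check that the image, which is a $\Lambda$-subalgebra, contains $\Lambda$ together with all these operators.

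First I would note that $\Lambda$ lies in the image: the commutative square preceding Lemma \ref{Compatibility }, together with the isomorphism $\bar{\Q}_p[[T_1,\dots,T_{n+1}]]\overset{\sim}{\longrightarrow}\Lambda$ of that lemma, shows that the structural map $\Lambda\rightarrow\cT$ factors through $\cR\rightarrow\cT$. Next, for $\ell\nmid\gn p$ the operator $T_\ell=\Tr\rho_{\cT}(\Frob_\ell)$ is, by Proposition \ref{Deformation to T}(i), the image of $\Tr\rho_{\cR}(\Frob_\ell)\in\cR$, hence lies in the image. For the operators at $p$ I would use that $\rho_{\cT}$ is quasi-ordinary (Proposition \ref{Deformation to T}(ii)): for each $\gp_i\mid p$ the universal unramified quotient character $\psi_{i,\cR}''$ of $\rho_{\cR}$ is carried by $\cR\rightarrow\cT$ to the unramified quotient character of $(\rho_{\cT})_{|G_{F_{\gp_i}}}$, whose value at $\Frob_{\gp_i}$ is exactly $U_{\gp_i}$ (this is the character $\delta_{\gp_i}$ sending $[\varpi,F_{\gp_i}]$ to $U_{\gp_i}$ appearing in the proof of Proposition \ref{Deformation to T}). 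Thus $\psi_{i,\cR}''(\Frob_{\gp_i})\in\cR^\times$ maps to $U_{\gp_i}$, so each $U_{\gp_i}$ lies in the image.

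Having shown the image contains $\Lambda$, all $T_\ell$ and all $U_{\gp_i}$, I conclude as follows: the image is a $\Lambda$-subalgebra of $\cT$ containing a topologically generating family, and since $\cT$ is noetherian and finite over the complete local ring $\Lambda$, this subalgebra is a finitely generated $\Lambda$-submodule, hence closed for the $\gm_\Lambda$-adic topology; being both closed and dense it equals $\cT$. The delicate point, and the one I expect to be the main obstacle, is the assertion invoked in the first paragraph that $T_\ell$ and $U_{\gp_i}$ topologically generate $\cT$ over $\Lambda$: this is exactly the input that must be read off from the construction of $\cE$, the local ring $\cT$ arising by design from a Hecke algebra generated over $\cO(\cW_F)$ by precisely these operators, and it is the analogue for Hilbert modular forms of the surjectivity step carried out in \cite{D-B}.
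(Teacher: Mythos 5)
Your proof is correct and takes essentially the same route as the paper's: reduce to the topological generators $T_\ell$ ($\ell\nmid\gn p$) and $U_{\gp_i}$ ($\gp_i\mid p$) of $\cT$ over $\Lambda$, obtain $T_\ell$ as the image of $\Tr\rho_{\cR}(\Frob_\ell)$, and obtain $U_{\gp_i}$ from the quotient character $\psi_{i,\cR}''$, whose image in $\cT$ is the character $\delta_{\gp_i}$ sending the Artin symbol of a uniformizer to $U_{\gp_i}$. Your only additions are to make explicit the $\Lambda$-linearity (which the paper establishes just before Lemme \ref{Compatibility }) and the closedness/density argument at the end, both of which the paper leaves implicit.
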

\begin{proof}\

Puisque $\cT$ est topologiquement engendré sur $\Lambda$ par $U_{\gp}$ pour tout $\gp \mid p$ et  $T_\ell$ pour  $\ell \nmid \gn p$, il suffit de montrer que ces éléments sont dans l'image de $\cR$ via le morphisme (\ref{nearly-ord univ}).
 
Pour $\ell \nmid  \gn p$,  $T_\ell=\Tr \rho_{\cT}(\Frob_\ell)$ est l'image de la trace de $\rho_{\cR}(\Frob_\ell)$. 

D'autre part, la restriction de $\rho_{\cR}$ à $G_{F_{\gp_i}}$ pour tout $\gp_i \mid p$ est une extension du $\psi_{i,\cR}''$ par le caractère $\psi_{i,\cR}'$, où l'image du caractère $\psi_{i,\cR''}$ dans $\cT$ est le caractère $\delta_{\gp_i}$ qui envoie $[y, F_{\gp_{i}}]$ sur l'opérateur de Hecke $T(y)$ où $[.,F_{\gp_i}]:\widehat{F_{\gp_i}^{\times}} \rightarrow G^{ab}_{F_{\gp_i}}$ est le symbol d'Artin. Ainsi, $U_{\mathfrak{p}_i}=[ \pi_{\gp_i}, F_{\gp_i}]$ est dans l'image du morphisme (\ref{nearly-ord univ}) pour une certaine uniformisante $\pi_{\gp_i}$ du corps complet $F_{\gp_i}$.

\end{proof}

D'après la proposition \ref{Compatibility }, $\varLambda$ représente le problème de déformation de $(\det \rho,(\psi_{i}'')_{\gp_i \mid p})$ aux éléments inversibles d'une $\bar{\Q}_p$-algèbre artinienne $A$. Donc, le foncteur oubli des déformations de $(\det \rho,(\psi_{i}'')_{\gp_i \mid p})$ aux déformations $\det \rho$ induit la surjection suivante: 

$$\varLambda\twoheadrightarrow \bar{\Q}_p[[1+p\Z_p]].$$

Le noyau $\mathcal{I}^{aug}$ de la surjection ci-dessus est appelé l'idéal d'augmentation.

On rappelle que $$\cT^{ord}=\cT/\mathcal{I}^{aug} \cT$$ 

En utilisant le même argument, on obtient l'isomorphisme suivant: 
\begin{equation}\label{augmentation}
\cR/\mathcal{I}^{aug} \cR \simeq \cR^{ord}
\end{equation}

On note $\cR'$ le quotient de $\cR$ par l'idéal engendré par l'image de l'idéal maximal de $\varLambda$.

Il est facile de voir que $\cR'$ est le plus grand quotient $p$-ordinaire de $\cR$ dans lequel $\rho$ peut être déformée avec un déterminant constant. Ainsi, l'anneau local $\cR'$ représente $\cD^{ord}_{\det \rho}$.

\subsection{Démonstration du Théorème \ref{main-thm-RM} et du Théorème \ref{CM-case}.}\

\begin{theorem}\label{n,ord iso}Sous les hypothèses du Théorème \ref{main-thm-RM} ou du Théorème \ref{CM-case}, le morphisme (\ref{nearly-ord univ}) est un isomorphisme d'anneaux réguliers.

\end{theorem}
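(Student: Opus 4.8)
Mon plan est d'argumenter de mani\`ere purement dimensionnelle. On sait d\'ej\`a, par la Proposition \ref{prop-surj}, que le morphisme (\ref{nearly-ord univ}) $\cR\twoheadrightarrow\cT$ est surjectif; il me restera donc \`a montrer que la source $\cR$ et le but $\cT$ ont la m\^eme dimension de Krull, \'egale \`a $[F:\Q]+1$, et que $\cR$ est r\'egulier, la conclusion en d\'ecoulant formellement.

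Je commencerais par la dimension de plongement de $\cR$. Comme $\cR$ repr\'esente le foncteur $\cD$, son espace tangent de Zariski $\gm_{\cR}/\gm_{\cR}^2$ s'identifie \`a $t_{\cD}$, dont la dimension vaut $[F:\Q]+1$ par le Th\'eor\`eme \ref{dim c} (lorsque $M$ est totalement r\'eel) ou par le Corollaire \ref{CM tg} (sous les hypoth\`eses du Th\'eor\`eme \ref{CM-case}). La dimension de plongement de $\cR$ \'etant $[F:\Q]+1$, l'anneau $\cR$ est un quotient de $\bar\Q_p[[X_1,\dots,X_{[F:\Q]+1}]]$, d'o\`u $\dim\cR\leq [F:\Q]+1$.

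Ensuite je minorerais $\dim\cR$ en utilisant la surjection (\ref{nearly-ord univ}), qui donne $\dim\cR\geq\dim\cT$. Or $\cT$ est le compl\'et\'e de l'anneau local de $\cE$ en $x$, et $\cE$ est r\'eduite et \'equidimensionnelle de dimension $[F:\Q]+1$; puisque la compl\'etion pr\'eserve la dimension, on a $\dim\cT=[F:\Q]+1$. J'obtiendrais ainsi $\dim\cR=[F:\Q]+1$, soit l'\'egalit\'e entre dimension de Krull et dimension de plongement: $\cR$ est donc un anneau local r\'egulier, en particulier int\`egre, et la surjection pr\'ec\'edente est un isomorphisme $\cR\simeq\bar\Q_p[[X_1,\dots,X_{[F:\Q]+1}]]$.

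Pour conclure, je consid\'ererais le noyau $J$ de (\ref{nearly-ord univ}): on a $\dim(\cR/J)=\dim\cT=[F:\Q]+1=\dim\cR$. Comme $\cR$ est un anneau local r\'egulier, il est int\`egre et cat\'enaire \'equidimensionnel, de sorte que tout id\'eal non nul y est de hauteur au moins $1$ et ferait strictement chuter la dimension du quotient; il s'ensuit que $J=0$ et que (\ref{nearly-ord univ}) est un isomorphisme d'anneaux r\'eguliers. Le point r\'eellement d\'elicat se situe en amont de cet argument, \`a savoir le calcul $\dim t_{\cD}=[F:\Q]+1$ men\'e dans les Th\'eor\`emes \ref{dim c} et \ref{CM tg}; une fois ce calcul acquis, ce sont la r\'eduction et l'\'equidimensionnalit\'e de $\cE$ qui rigidifient la situation et permettent de passer de la surjection \`a l'isomorphisme.
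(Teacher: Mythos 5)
Votre démonstration est correcte et suit essentiellement le même chemin que celle de l'article : on borne la dimension de plongement de $\cR$ par $[F:\Q]+1$ grâce au Théorème \ref{dim c} et au Corollaire \ref{CM tg}, on utilise l'équidimensionnalité de $\cE$ (donc de $\cT$) et la surjection de la Proposition \ref{prop-surj} pour obtenir $\dim \cR = [F:\Q]+1$, d'où la régularité de $\cR$ et la nullité du noyau. Vous explicitez simplement les étapes de comptage dimensionnel (et l'argument d'intégrité/caténarité pour conclure $J=0$) que l'article laisse implicites dans sa preuve très condensée.
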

\begin{proof}\

Le corollaire \ref{CM tg} et le théorème \ref{dim c} impliquent que $\dim t_{\cD}=[F:\Q]+1$, donc la dimension de l'espace tangent de $\cR$ est $[F:\Q]+1$. D'autre part,  l'anneau local $\cT$ est equidimensionnel de dimension $[F:\Q]+1$ (puisque $\cE$ est equidimensionnelle de dimension $[F:\Q]+1$) et par consequent, la proposition \ref{prop-surj} implique que (\ref{nearly-ord univ}) est un isomorphisme d'anneaux locaux réguliers de dimension $[F:\Q]+1$. 
 
 \end{proof}
 
L'isomorphisme d'anneaux locaux $\cR \simeq \cT$ et (\ref{augmentation}) impliquent qu'il existe des isomorphismes $\cR^{ord} \simeq \cT^{ord}$ et $\cR' \simeq \cT'$. Ainsi, on peut conclure grâce au théorème \ref{dim c} dans le cas où $M$ est totalement réel. Dans le cas où $M$ est totalement complexe, le corollaire \ref{CM tg} implique que $\cT' \simeq \bar{\Q}_p$ et donc $\kappa$ est étale en $x$, ce qui achève notre démonstration.

\subsection{$\cR_\m=\cT$ dans la cas où l'extension $M/F$ n'est pas totalement réel.}\

Soient $h^{n,ord}(\mathfrak{n},\cO)$ l'algèbre de Hecke quasi-ordinaire en $p$ construite par Hida dans \cite{hida90} et $h'$ { \it la sous-algèbre de Hecke} de $h^{n,ord}(\gn,\cO)$ engendrée sur l'algèbre d'Iwasawa par les opérateurs de Hecke $\{U_\gp,T_{\ell},\textless \ell \textgreater \}_{\gp \mid p,\ell \nmid \gn p}$ (On a omit les operateurs $U_\mathfrak{q} $ pour $ \mathfrak{q} \mid \gn$). Il est connu que le lieu quasi-ordinaire de $ \cE^{n.ord}$ de $\cE$ est la fibre générique de l'algèbre de Hecke $h'$. Puisque $f$ est ordinaire en $p$, il existe un unique morphisme $\varphi^{n,ord}_{\theta}:h' \rightarrow \cO$ qui envoie $T_\ell$ sur $\Tr \rho(\Frob_\ell)$ pour tout $\ell \nmid \gn p$.

On note $\mathcal{P}^{n,ord}_{\theta}$ l'idéal premier $\ker \varphi^{n,ord}_{\theta}$ et $\cH$ le complété de l'anneau local de $\Spec h'$ en $\mathcal{P}^{n,ord}_{\theta}$. Après une extension par des scalaires, on peut supposer que $\cH$ contient $\bar{\Q}_p$.

Soit $\mathfrak{C}_{\cO}$ la catégorie des $\cO$-algèbres locales complètes noethériennes de corps résiduel $\mathbb{F}$ et dont les morphismes sont les homomorphismes locaux d'anneaux locaux qui induisent l'identité sur les corps résiduels. 

On a les conditions suivantes:

\begin{itemize}

\item[$\bullet$] $({ \bf AI}_F)$  { \small La restriction $\bar{\rho}$ à $G_{F(\sqrt{(-1)^{(p-1)/2}p)}}$ est absolument irréductible.}

\item[$\bullet$] $({ \bf LD}_p)$ { \small  $F$ est linéairement disjointe de $\Q(\zeta_p)$ sur $\Q$ et $\mu_p(F_{\gp})=\{1\}$ pour tout $\gp \mid p$, où $\zeta_p$ est une racine $p$-ième de l'unité et $\mu_p(F_\gp)$ est l'ensemble des racines $p$-ième de l'unité de $F_{\gp}$.}
\end{itemize}

La condition $({ \bf LD}_p)$ est plus faible que la condition: $p$ est non ramifié dans $F$.

On suppose dans cette sous-section que la représentation résiduelle $\rho \otimes \mathbb{F}=\bar{\rho}=\Ind^{F}_{M} \bar{\psi} $ est absolument irréductible, minimale au sens de Fujiwara \cite{Fujiwara}, $p$-distinguée et l'ordre de $\psi$ est premier avec $p$. D'après les critères de Schlesinger, le foncteur $\cD_{\bar{\rho}}$ des déformations de $\bar{\rho}$ qui sont minimales et non ramifiées en dehors de $\gn p$ et quasi-ordinaires en $p$ à valeurs dans les objets de la catégorie $\mathfrak{C}_\cO$ est représentable par le couple $(\cR_{\bar{\rho}}^{\m},\rho^{n,ord}_{\mathbb{F}})$.

D'autre part, on note $h^{n,ord}$ la composante locale $h^{n,ord}(\gn,\cO)$ correspondant à $\bar{\rho}$ ($h^{n,ord}(\gn,\cO)$ est semi-local) et $\rho_{h^{n,ord}}:G_{F,S}\rightarrow \GL_2(h^{n,ord})$ la déformation quasi-ordinaire en $p$ de $\bar{\rho}$ construite par Hida dans \cite{hida90} qui envoie $\Frob_\ell$ vers $T_\ell$ pour $\ell \nmid \gn p$. Sous les hypothèses  $({ \bf AI}_F)$ et $({ \bf LD}_p)$, le théorème de Fujiwara's \cite{Fujiwara} appliqué à la représentation résiduelle $\bar{\rho}$ nous donne que $\cR_{\bar{\rho}}^{\m} \simeq h^{n,ord}$. 

Pour tout premier $\mathfrak{q}$ de $F$ divisant $\gn$, on note $a_{\mathfrak{q}}$ la valeur propre de $f$ pour l'opérateur de Hecke $U_{\mathfrak{q}}$. Soit $\cD_{\m}$ le sous-foncteur de $\cD$ qui consiste en les déformations $\rho_A$ qui sont minimales, dans le sens où pour tout $\mathfrak{q} \mid \gn$ tel que $a_{\mathfrak{q}}\ne 0$, l'espace des $I_{\mathfrak{q}}$-invariants dans $\rho_A$ est un $A$-module libre de rang $1$. Le foncteur $\cD_{\m}$ est représentable par $(\cR_\m,\rho_{\cR_\m})$. Puisque la représentation $\rho$ est une déformation de $\bar{\rho}$, donc $\cD_{\m}$ est la fibre générique du foncteur $\cD_{\bar{\rho}}$ (voir \cite[2.3.5]{Kisin}). Ainsi, il existe un isomorphisme (après une extension par des scalaires) entre $\cR_{\m}$ et la complétion de la localisation de $\cR_{\bar{\rho}}^{\m}$ par l'idéal premier de hauteur $1$ donné par le noyau du morphisme $\cR_{\bar{\rho}}^{\m} \rightarrow \cO$ qui induit la déformation $\rho$ de $\bar{\rho}$. De plus, la déformation $\rho_{\cR_\m}$ est le tiré-en-avant de $\rho^{n,ord}_{\mathbb{F}}$ par le morphisme de localisation $\cR_{\bar{\rho}}^{\m} \rightarrow \cR_\m$. 

\begin{lemma}\label{R=T M mixt}Avec les notations ci-dessus, on a:

\begin{enumerate}

\item Il existe un isomorphisme $(\cH, \rho_{\cH})\simeq (\cR_\m,\rho_{\cR_\m})$. 

\item Il existe un isomorphisme $\cH \simeq \cT$.

\item L'application canonique $\Spec h^{n,ord}(\gn,\cO) \rightarrow \Spec h'$ induit un isomorphisme sur les complétés des anneaux locaux aux points associés à $f$.

\end{enumerate}

\end{lemma}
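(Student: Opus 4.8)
The plan is to isolate the formal content in (ii), concentrate the genuine deformation-theoretic work in (iii), and then obtain (i) by combining (iii) with Fujiwara's theorem \cite{Fujiwara}.

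I would begin with (ii). Since $\cE^{n.ord}$ is open and closed in $\cE$, the completed local ring of $\cE$ at $x$ agrees with that of $\cE^{n.ord}$ at $x$; as $\cE^{n.ord}$ is by construction the rigid generic fibre of $\Spec h'$ and $x$ corresponds to the prime $\mathcal{P}^{n,ord}_\theta=\ker\varphi^{n,ord}_\theta$, the standard comparison in the eigenvariety formalism (cf. \cite{chenevier-thesis}) between the completed local ring of a generic fibre at a classical point and the completed local ring of the underlying Hecke algebra at the associated prime yields $\cT\simeq\cH$ after extension of scalars to $\bar\Q_p$. This step is purely formal and independent of (i) and (iii).

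The heart of the lemma is (iii). The inclusion $h'\hookrightarrow h^{n,ord}(\gn,\cO)$ is module-finite over the Iwasawa algebra, so after localising at $\mathcal{P}^{n,ord}_\theta$ and completing it induces a finite local homomorphism $\cH\to\widehat{h^{n,ord}(\gn,\cO)}_{f}$, and everything rests on surjectivity. I would prove surjectivity by showing that the only generators of $h^{n,ord}(\gn,\cO)$ not already present in $h'$, namely the $U_{\mathfrak{q}}$ for $\mathfrak{q}\mid\gn$, lie in the image of $\cH$. The point is that $\cH$ already supports the deformation $\rho_{\cH}$ of $\rho$ reconstructed from the traces $T_\ell=\Tr\rho_{\cH}(\Frob_\ell)$; the minimality condition defining $\cD_\m$ forces the $I_{\mathfrak{q}}$-invariants of $\rho_{\cH}$ to be a free rank-one module for each $\mathfrak{q}\mid\gn$ with $a_{\mathfrak{q}}\neq 0$, and local-global compatibility then identifies $U_{\mathfrak{q}}$ with the Frobenius eigenvalue on this line, an element of $\cH$, while $U_{\mathfrak{q}}=0$ when $a_{\mathfrak{q}}=0$. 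Once this is known the map is onto, there is a unique prime of $h^{n,ord}(\gn,\cO)$ over $\mathcal{P}^{n,ord}_\theta$, and the isomorphism follows from the flatness of completion applied to the finite injection $h'\hookrightarrow h^{n,ord}(\gn,\cO)$. This is the step I expect to be the main obstacle, precisely because the redundancy of the $U_{\mathfrak{q}}$ must be verified for the whole local family carried by $\cH$ and not merely at the single eigenform $f$.

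Finally, (i) follows by assembly. Fujiwara's theorem, applicable under $(\mathbf{AI}_F)$ and $(\mathbf{LD}_p)$, gives $\cR_{\bar\rho}^{\m}\simeq h^{n,ord}$, and by construction $\cR_\m$ is the completed localisation of $\cR_{\bar\rho}^{\m}$ at the height-one prime attached to $\rho$, that is, the completed local ring of $h^{n,ord}$ at the point of $f$. Since $h^{n,ord}$ is the local component of $h^{n,ord}(\gn,\cO)$ cut out by $\bar\rho$, part (iii) identifies this completed local ring with $\cH$, whence $\cH\simeq\cR_\m$. All the identifications respect the tautological Galois data, the pseudo-character $\Ps_\cE$ and the push-forward of $\rho^{n,ord}_{\mathbb{F}}$ agreeing on $\Frob_\ell$ for $\ell\nmid\gn p$, so $\rho_{\cH}$ and $\rho_{\cR_\m}$ correspond and one obtains the isomorphism of pairs $(\cH,\rho_{\cH})\simeq(\cR_\m,\rho_{\cR_\m})$.
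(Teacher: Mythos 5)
Your part (ii) is exactly the paper's argument (the identification $\cE^{n.ord}\simeq (\Spec h')^{rig}$ and comparison of completed local rings), and your assembly of (i) from Fujiwara's theorem plus Kisin's lemma \cite[2.3.5]{Kisin} matches the reduction the paper performs just before the lemma. The divergence --- and the gap --- is in part (iii), which you make the logical heart and place \emph{before} any use of Fujiwara. To show that $U_{\mathfrak{q}}$ lies in the image of $\cH$, you assert that ``the minimality condition defining $\cD_\m$ forces the $I_{\mathfrak{q}}$-invariants of $\rho_{\cH}$ to be a free rank-one module''. This is circular: the minimality condition \emph{defines} which deformations belong to $\cD_\m$; it imposes nothing on $\rho_{\cH}$, which at this stage is merely the representation recovered from the traces $T_\ell$ via Nyssen--Rouquier. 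That the Hecke-side family is minimal at the primes $\mathfrak{q}\mid\gn$ is precisely the nontrivial statement to be proven; it amounts to local-global compatibility at $\mathfrak{q}\mid\gn$ for the nearly-ordinary Hida family (essentially what makes Fujiwara's map $\cR_{\bar{\rho}}^{\m}\rightarrow h^{n,ord}$ well defined), not a formal consequence of the definition of the deformation functor. Your phrase ``local-global compatibility then identifies...'' gestures at this input but does not supply it, and your chosen ordering prevents you from extracting it from Fujiwara's theorem. The subsidiary claim that $U_{\mathfrak{q}}=0$ whenever $a_{\mathfrak{q}}=0$ is likewise unjustified: in the completed local ring one only knows then that $U_{\mathfrak{q}}$ lies in the maximal ideal.

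The paper's route avoids this issue entirely by proving (i) \emph{first}: Fujiwara gives $\cR_{\bar{\rho}}^{\m}\simeq h^{n,ord}$, Kisin's lemma identifies $\cR_\m$ with the completed local ring of $h^{n,ord}$ at the point of $f$, and Nyssen--Rouquier then says that $\cR_\m$, being a deformation ring of an absolutely irreducible representation, is topologically generated over $\varLambda$ by the traces of $\rho_{\cR_\m}$, hence (by Chebotarev and continuity) by the $T_\ell$ with $\ell\nmid\gn p$, which lie in the image of $\cH$. Surjectivity of $\cH\rightarrow \widehat{(h^{n,ord})}_f$ --- that is, the redundancy of the operators $U_{\mathfrak{q}}$, which is your part (iii) --- then comes for free from generation by traces, with no analysis of the local structure at $\mathfrak{q}\mid\gn$ of the Hecke family; injectivity is the formal flatness argument you also give. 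In short: the paper derives ``$U_{\mathfrak{q}}\in\cH$'' from $R=T$ plus trace generation, whereas you try to derive it from a minimality property of the Hecke family that you have not established. To repair your proof, either reorder it as the paper does, or cite a precise local-global compatibility statement for $\rho_{h^{n,ord}}$ at the primes dividing $\gn$.
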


\begin{proof}\
(i) D'après Nyssen \cite{nyssen} et Rouquier \cite{Rouquier}, $\cR_\m$ est engendré sur $\varLambda$ par la trace de $\rho_{\cR_{\m}}$. Donc, l'isomorphisme $(\cH, \rho_{\cH})\simeq (\cR_\m,\rho_{\cR_\m})$ découle immédiatement de l'isomorphisme $\cR_{\bar{\rho}}^{\m}\simeq h^{n,ord}$, du fait que le foncteur $\cD_{\m}$ est la fibre générique du foncteur $\cD_{\bar{\rho}}$ et du lemme \cite[2.3.5]{Kisin}. 

(ii) D'après la construction de la variété $p$-adique rigide $\cE$, il existe un isomorphisme de variétés analytiques rigides $\cE^{n.ord}\simeq (\Spec h')^{rig}$ induisant un isomorphisme naturel sur les complétés des anneaux locaux en tout point. 

(iii) Puisque $\bar{\rho}$ est minimale, $U_{\mathfrak{q}} \in \cH$ pour tout $\mathfrak{q} \mid \gn$. 
\end{proof}

\subsection{Preuve du Théorèmes \ref{thm M mixt}.}\

Soit $\cD_{\m}^{ord}$ le sous-foncteur de $\cD_{\m}$ qui consiste en les déformations $\rho_A$ qui sont ordinaires en $p$. Le foncteur $\cD_{\m}^{ord}$ est représentable par $\cR_{\m}^{ord}$.  

On a l'isomorphisme suivant: 
\begin{equation}\label{augmentation-M ni r ni c}
\cR_{\m}/\mathcal{I}^{aug} \cR_{\m} \simeq \cR^{ord}_{\m}
\end{equation}

On note $\cR'_{\m}$ pour le quotient de $\cR_{\m}$ par l'idéal engendré par l'image de l'idéal maximal de $\varLambda$.

On remarque que $\cR'_{\m}$ est le plus grand quotient $p$-ordinaire de $\cR_{\m}$ dans lequel $\rho$ peut être déformée avec un déterminant constant. Ainsi, l'anneau local $\cR'_{\m}$ représente le sous-foncteur $\cD^{ord}_{\det \rho}$ qui consiste en les déformations qui sont minimales.

D'après le lemme \ref{R=T M mixt}, on a un isomorphisme entre $\cR_{\m} \simeq \cT$. De plus, (\ref{augmentation-M ni r ni c}) impliquent qu'il existe des isomorphismes entre $\cR^{ord}_{\m} \simeq \cT^{ord}$ et $\cR'_{\m} \simeq \cT'$. Dans nos calculs de l'espace tangent du foncteur $\cD$, la condition minimale en $\mathfrak{q}\mid  \gn$ est toujours vraie pour les $1$-cocycles, puisque n'importe quel élément de $\Hom(G_H,\bar{\Q}_p)$ est non ramifié en dehors de $p$ et $\rho$ est d'image finie. Ainsi, les espaces tangents de $\cR'$ et de $\cR'_{\m}$ sont égaux et donc l'espace tangent de $\cR'_\m$ est non trivial d'après la proposition \ref{tg M mixt}, ce qui achève notre démonstration.

\subsection{La courbe de Hecke-Hilbert parallèle $\cC_F$ au point $x$}\

La courbe de Hecke-Hilbert parallèle $\cC_F$ est équidimensionelle de dimension $1$.

\begin{cor}On a:

\begin{enumerate}

\item Sous les hypothèses du théorème \ref{thm M mixt} et si $M$ a au plus $2([F:\Q]-2)$ plongements complexes et $S^p$ est vide, alors $f$ est un point singulier de la courbe de Hecke-Hilbert $\cC_F$.

\item Supposons que $M$ est totalement réel, la conjecture de Leopoldt est vraie pour $M$ et $\# S_p \geq 2$, alors $f$ est un point singulier de la courbe de Hecke-Hilbert $\cC_F$.

\end{enumerate}
\end{cor}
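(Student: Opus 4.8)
Le plan est d'exploiter le fait que $\cC_F$ est \'equidimensionnelle de dimension $1$: le point $x$ est lisse sur $\cC_F$ si et seulement si le complet\'e $\cT^{ord}$ de son anneau local en $x$ est r\'egulier, c'est-\`a-dire si et seulement si $\dim t_{\cT^{ord}}=1$; ainsi $x$ est un point singulier d\`es que $\dim t_{\cT^{ord}}\geq 2$. Tout se ram\`ene donc \`a minorer cet espace tangent \`a l'aide des calculs galoisiens d\'ej\`a effectu\'es.

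Pour l'assertion (ii), le th\'eor\`eme \ref{main-thm-RM}(ii) (cons\'equence du th\'eor\`eme \ref{dim c}(i) via l'isomorphisme $\cT^{ord}\simeq\cR^{ord}$) affirme directement que $\dim t_{\cT^{ord}}=\max\{1,\sum_{\gp_i\in S_p}f_i e_i\}$. Comme chaque facteur $f_i e_i$ vaut au moins $1$ et que $\#S_p\geq 2$, la somme est $\geq 2$; d'o\`u $\dim t_{\cT^{ord}}\geq 2$ et la singularit\'e de $\cC_F$ en $x$.

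Pour l'assertion (i), je commencerais par rappeler, comme dans la preuve du th\'eor\`eme \ref{thm M mixt}, que l'isomorphisme $\cR^{ord}_\m\simeq\cT^{ord}$ issu de (\ref{augmentation-M ni r ni c}) et l'\'egalit\'e des espaces tangents de $\cR^{ord}$ et $\cR^{ord}_\m$ (la condition minimale \'etant automatique sur les $1$-cocycles) donnent $\dim t_{\cT^{ord}}=\dim t_{\cD^{ord}}$. Comme $\cD^{ord}_{\det\rho}$ est un sous-foncteur de $\cD^{ord}$, l'inclusion $t^0_{\cD^{ord}}\hookrightarrow t_{\cD^{ord}}$ fournit $\dim t_{\cD^{ord}}\geq\dim t^0_{\cD^{ord}}$. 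Je minorerais enfin $\dim t^0_{\cD^{ord}}$ par la proposition \ref{tg M mixt}: $M$ poss\'edant $2r$ plongements complexes avec $2r\leq 2([F:\Q]-2)$, soit $r\leq[F:\Q]-2$, et $S^p$ \'etant vide, on obtient $\dim t^0_{\cD^{ord}}\geq[F:\Q]-r-|S^p|-\sum_{\gp_i\in S^p}f_i e_i=[F:\Q]-r\geq 2$. (Les hypoth\`eses du th\'eor\`eme \ref{thm M mixt}, en particulier que $\bar\psi/\bar\psi^\sigma$ n'est pas quadratique, garantissent $\psim^2\neq 1$, de sorte que la proposition \ref{tg M mixt} s'applique.) On conclut $\dim t_{\cT^{ord}}\geq 2$, et $x$ est singulier sur $\cC_F$.

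Le point d\'elicat n'est pas un calcul mais la traduction correcte entre anneaux locaux et foncteurs de d\'eformation: s'assurer que l'espace tangent de $\cC_F$ en $x$ est bien $t_{\cD^{ord}}$ (et non $t^0_{\cD^{ord}}$ ni $t_\cD$), et que la minoration de la proposition \ref{tg M mixt}, obtenue pour le probl\`eme \`a d\'eterminant fix\'e, se transf\`ere au probl\`eme quasi-ordinaire via l'inclusion de foncteurs ci-dessus. Une fois ces identifications en place, les deux assertions se r\'eduisent \`a la seule in\'egalit\'e num\'erique $\dim t_{\cT^{ord}}\geq 2$.
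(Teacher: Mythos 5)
Votre démonstration est correcte et suit essentiellement la même démarche que celle du papier : pour (i), l'isomorphisme $\cR^{ord}_{\m}\simeq \cT^{ord}$, la minoration de la proposition \ref{tg M mixt} (avec $r\leq [F:\Q]-2$ et $S^p=\varnothing$) et la comparaison avec la dimension de Krull $1$ de $\cT^{ord}$ ; pour (ii), le théorème \ref{dim c} via $\cR^{ord}\simeq\cT^{ord}$ et $\#S_p\geq 2$. Vous explicitez simplement des étapes (l'inclusion $t^0_{\cD^{ord}}\subset t_{\cD^{ord}}$, le transfert de la condition minimale sur les $1$-cocycles) que le papier laisse implicites dans sa preuve très condensée.
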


\begin{proof}\

(i) Puisque $\cR^{ord}_{\m}\simeq \cT^{ord}$, la proposition \ref{tg M mixt} implique que la dimension de l'espace tangent de $\cT^{ord}$ est au moins $2$, or la dimension de Krull de $\cT^{ord}$ est $1$ (puisque $\cC_F$ est equidimensionnelle de dimension $1$), d'où le résultat.

(ii) Même argument que dans (i) combiné avec le théorème \ref{dim c}.

\end{proof}

\begin{rem}
La variété de Hecke existe si $N_\Q^F(\mathfrak{n}) \geq 4$, car sa construction dépend de l'existence d'un schéma qui représente l'espace de modules de Hilbert-Blumenthal, et pour $N_\Q^F(\mathfrak{n}) < 4$, cet espace de modules n'est pas representable par un schéma. 

Puisque l'algèbre de Hecke $h'$ est finie et sans torsion sur l'algèbre d'Iwasawa, alors les théorèmes \cite[2.4]{hida88} et \cite[3]{wiles} impliquent que $\cH$ est equidimensionnel de dimension $[F:\Q]+1$, et ainsi si $N_\Q^F(\mathfrak{n}) < 4$, nos théorèmes restent vraie après avoir remplacer $\cT$ par $\cH$.

\end{rem}

\section{Preuve du Th\'eor\`eme $0.5$}

On démontre le Théorème \ref{q exp} dans cette section en utilisant les techniques de \cite{darmon}. L'existence du sous-groupe canonique sur un voisinage strict du lieu ordinaire de la variété de Hilbert induit l'injection suivante $$ \jmath: S_{1}(\gn p, \chi)[f]  \hookrightarrow S^{\dag}_{1}(\gn, \chi)[[f]],$$où $S^{\dag}_{1}(\gn, \chi)[[f]]$ est l'espace propre généralisé associé à $f$ dans l'espace des formes surconvergentes de poids $1$. En utilisant le même argument que celui déjà utilisé dans la démonstration de la proposition \cite[1.2]{darmon}, l'inclusion $\jmath$ est un isomorphisme si, et seulement si, $\cE$ est étale sur $\cW_F $ en $x$. 

Sous les hypothèses du Théorème \ref{main-thm-RM} et si de plus $S_{p}$ est non vide, alors il existe un morphisme surjectif $\pi :\cT' \rightarrow \bar{\Q}_p[\epsilon]$ de noyau $I_\pi$. $I_\pi$ annule un sous-espace $S^{\dag}_{1}(\gn, \chi)[I_\pi]$ de dimension $2$ de $S^{\dag}_{1}(\gn, \chi)[[f]]=\Hom_{\bar{\Q}_p}(\cT',\bar{\Q}_p)$ et ce sous-espace contient une forme propre généralisée normalisée $f^{\dag}$.

Pour tout idéal premier $\mathfrak{q}$ de $F$, on note respectivement $a_\mathfrak{q}(f^{\dag})$ et $a_\mathfrak{q}(f)$ les coefficients de Fourier de $f^\dag$ et $f$. D'autre part, avec le même argument utilisé dans \cite{darmon}, l'opérateur de Hecke $T_\gq$, où $\gq \nmid \gn p$, agit sur $f^{\dag}$ de la manière suivante
 \begin{equation}
 \label{eqn:hecke} { \small 
 T_\mathfrak{q} f^{\dag} = a_\mathfrak{q}(f) f^{\dag} + a_\mathfrak{q}(f^{\dag} ) f  }
 \end{equation}
  
On rappelle que $H$ est le corps fixé par $\ker \psim$ et que $G''=\Gal(H/M)$. Soit $\ell \nmid \gn p$ un premier de $F$ qui est inerte dans $M$, alors le premier de $M$ au dessus de $\ell$ se décompose complètement dans $H$ (puisque $\rho$ est non ramifié en $\ell$). On note $\Sigma_\ell$ l'ensemble des premiers de $H$ au dessus de $\ell$. Puisque l'extension $H/M$ est galoisienne, $G''$ agit sur $\Sigma_\ell$. Soient $\lambda \in \Sigma_\ell$ et $u_\lambda \in \cO_H[1/\lambda]^\times \otimes \Q$ une $\lambda$-unité de $H$ de $\lambda$-valuation égale à $1$. $u_\lambda$ est défini à une unité près et l'élément 

 \begin{equation}\label{unit lambda} 
u(\psim,\lambda, g_i) = \sum_{h \in G''} \psi_{_{\heartsuit}}(h) \otimes g_i \circ h(u_\lambda) \in \bar{\Q} \otimes   g_i (\cO_H)[1/\ell]^\times \text{ où $g_i \in I'_F$}
\end{equation}
est indépendant du choix de $u_\lambda$, puisque $H$ est CM (car $M$ est totalement réel) et donc on n'a pas d'unité de $\bar{\Q}\otimes \cO_H^\times$ dans le 
 $\psim$-sous-espace propre.
 
Soit $e_2$ un vecteur propre de $\bar{\Q}_p^2$ pour $\psi^{\sigma}$ et $e_1 = \rho(\sigma) e_2$.  Dans la base $(e_1,e_2)$, on a
\begin{equation}\label{rho}
\rho_{|G_M} = \left(\begin{matrix} \psi & 0 \\ 0 & \psi^{\sigma} \end{matrix}\right) \qquad \rho_{|G_F \setminus G_M} = \left(\begin{matrix} 0 & \eta' \\ \eta & 0 \end{matrix}\right)
\end{equation}
où $\eta$ et $\eta'$ sont des fonctions de $G_F \setminus G_M$ à valeurs dans $\bar{\Q}$ définies par $\eta(h)=\psi(\sigma h)$ et $\eta'(h):=\psi(h \sigma^{-1})$.

Soient $\sigma_\lambda \in \Gal(H_\lambda/F_\ell)\subset \Gal (H/F)$ le Frobenius en $\ell$ attaché à la place première $\lambda$ et $H_{\psi}$ le corps de nombres fixé par $\psi$ ($H\subset H_{\psi}$). D'après la relation \cite[(13)]{darmon}, $\eta(\sigma_\lambda)$ ne dépend pas du choix du premier de $H_{\psi}$ au dessus de $\lambda$. 

On a les relations suivantes (voir \cite[(13)]{darmon}) 
\begin{equation}\label{N darmon}
\begin{split}
\forall & g \in \Gal(H/M) \text{, } \eta(\sigma_{g(\lambda)})=\psi(\sigma \sigma_{g(\lambda)})=\psi(\sigma g^{-1}\sigma_{\lambda}g)=\psim(g)\psi(\sigma \sigma_{\lambda})=\psim(g) \eta(\sigma_\lambda) \\
\forall & g \in \Gal(H/M) \text{, } u(\psim,g(\lambda), g_i)= \psi_{_{\heartsuit}}^{-1}(g) u(\psim,\lambda, g_i)
\end{split}
\end{equation}

D'après la relation (\ref{N darmon}), l'élément
\begin{equation}
\label{eqn:def-u-l}
 u(\psim,\ell, g_i) =  \psi(\sigma \sigma_{\lambda}) \otimes u(\psim,\lambda, g_i)  \in \bar{\Q} \otimes g_i(\cO_H)[1/\ell]^\times 
 \end{equation}
 dépend uniquement de $\ell$ et non du choix du premier $\lambda\in \Sigma_\ell$.

D'après le (ii) du théorème \ref{dim c}, le morphisme $\Psi': (\cO_H \otimes \Z_p)^{\times} \rightarrow \bar{\Q}_p$ défini par $$\left(u\otimes 1 \mapsto \sum_{g_i \in I'_F} \alpha_i  \sum_{g \in G''} \psim(g) \log_p \big(\iota_p\circ g_i \circ g(u) \big) \right)$$ 

est associé à un unique élément de l'espace tangent de $\cT'$ (i.e $\Psi'=b_{|G_H}=c^{\sigma}_{|G_H} \in \rH^{1}(H,\bar{\Q}_p)[\psim]$) et donne aussi un unique morphisme surjectif $\pi:\cT' \rightarrow \bar{\Q}_p[\epsilon]$ associé à $\Psi'$ qui induit une déformation $\tilde{\rho}:G_F \rightarrow \GL_2(\bar{\Q}_p[\epsilon])$ de déterminant égal à $\det \rho$.

D'après l'isomorphisme (\ref{iso-inf}), on peut étendre $\Psi'$ à un $1$-co-cycles de $\rH^{1}(G_M,\psim)$ de manière unique (à co-bords près), donc la déformation $\tilde{\rho}$ est de la forme suivante :

\begin{equation}\label{tilde}
\tilde\rho_{|G_M} = \left(\begin{matrix} \psi & \psi^{\sigma} \Psi'.\epsilon \\ \psi \Psi. \epsilon & \psi^{\sigma} \end{matrix}\right) \qquad \rho_{|G_F \setminus G_M} = \left(\begin{matrix} d_1. \epsilon & \eta' \\ \eta & d_2.\epsilon \end{matrix}\right)
\end{equation}

H.Darmon, A.Lauder and V.Rotger ont montré dans \cite{darmon} les propositions suivantes.

\begin{lemma}\cite[2.1]{darmon}
La fonction $\Psi \left( \text{resp. } \Psi' \right)$ appartient au groupe de cohomologie $\rH^1(M,\psi_{_{\heartsuit}}^{-1})$ ($\text{resp. } \rH^1(M, \psim)$). De plus, on a les relations suivantes après restriction des fonctions ci-dessus à $G_H$:
\begin{equation}
\label{kappa' and kappa}
 \Psi'(h) = \frac{\eta'(\tau)}{\eta(\tau)} \Psi(\tau h \tau^{-1}),
 \end{equation}
où $\tau \in G_F \setminus G_M$. 
\end{lemma}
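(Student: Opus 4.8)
The plan is to deduce both assertions from the single fact that $\tilde\rho:G_F\rightarrow\GL_2(\bar{\Q}_p[\epsilon])$ of (\ref{tilde}) is a group homomorphism, by comparing matrix entries modulo $\epsilon^2$. First I would treat the cocycle statement. For $g\in G_M$ write $\tilde\rho(g)=\left(\begin{smallmatrix}\psi(g)+\epsilon(\cdots) & \epsilon\psi^{\sigma}(g)\Psi'(g)\\ \epsilon\psi(g)\Psi(g) & \psi^{\sigma}(g)+\epsilon(\cdots)\end{smallmatrix}\right)$ and expand $\tilde\rho(gh)=\tilde\rho(g)\tilde\rho(h)$. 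The $(1,2)$-entry gives $\psi^{\sigma}(gh)\Psi'(gh)=\psi(g)\psi^{\sigma}(h)\Psi'(h)+\psi^{\sigma}(g)\psi^{\sigma}(h)\Psi'(g)$, and dividing by $\psi^{\sigma}(gh)$ yields exactly $\Psi'(gh)=\Psi'(g)+\psim(g)\Psi'(h)$, i.e. $\Psi'\in\rH^1(M,\psim)$; the $(2,1)$-entry gives symmetrically $\Psi(gh)=\Psi(g)+\psim^{-1}(g)\Psi(h)$, i.e. $\Psi\in\rH^1(M,\psim^{-1})$. This proves the first assertion without any further input.

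For the relation between $\Psi$ and $\Psi'$ I would apply the homomorphism property to a conjugation, namely $\tilde\rho(\tau h\tau^{-1})=\tilde\rho(\tau)\tilde\rho(h)\tilde\rho(\tau)^{-1}$ with $\tau\in G_F\setminus G_M$ and $h\in G_M$. The leading term of $\tilde\rho(\tau)$ is the antidiagonal matrix $\rho(\tau)=\left(\begin{smallmatrix}0 & \eta'(\tau)\\ \eta(\tau) & 0\end{smallmatrix}\right)$, and conjugation by an antidiagonal matrix swaps the two diagonal coordinates while sending the off-diagonal coordinates $(b,c)$ to $\left((\eta'/\eta)c,(\eta/\eta')b\right)$. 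Computing $\tilde\rho(\tau)\tilde\rho(h)\tilde\rho(\tau)^{-1}$ modulo $\epsilon^2$ (using $\det\tilde\rho(\tau)\equiv-\eta(\tau)\eta'(\tau)\pmod{\epsilon^2}$ for the inverse) and comparing the $(2,1)$-entries of the two sides, together with $\psi(\tau h\tau^{-1})=\psi^{\sigma}(h)$ — which holds because $\tau$ and $\sigma$ induce the same element of $\Gal(M/F)$ — produces
\[
\psi^{\sigma}(h)\,\Psi(\tau h\tau^{-1})=\tfrac{\eta(\tau)}{\eta'(\tau)}\,\psi^{\sigma}(h)\,\Psi'(h),
\]
whence, after cancelling $\psi^{\sigma}(h)$, the desired identity $\Psi'(h)=\tfrac{\eta'(\tau)}{\eta(\tau)}\Psi(\tau h\tau^{-1})$.

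The main obstacle is that, for a general $h\in G_M$, the diagonal $\epsilon$-entries $d_1,d_2$ of $\tilde\rho(\tau)$ and the diagonal perturbations of $\tilde\rho(h)$ contribute cross-terms proportional to $d_i\bigl(\psi^{\sigma}(h)-\psi(h)\bigr)$ to the off-diagonal entries after conjugation, and these do not vanish in general. This is precisely why the identity is only claimed after restriction to $G_H$: since $H=\bar{\Q}^{\ker(\ad\rho)}$ is fixed by $\ker\psim$, one has $\psim\equiv 1$ on $G_H$, hence $\psi(h)=\psi^{\sigma}(h)$ and all the obstructing terms disappear (note also that $G_H$ is normal in $G_F$, so $\tau h\tau^{-1}\in G_H$ and $\Psi,\Psi'$ become genuine homomorphisms there). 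I would therefore organize the computation so that the restriction to $G_H$ is invoked at exactly the step where these diagonal contributions must cancel; verifying that cancellation and keeping track of the determinant factor is the only genuinely computational part of the argument.
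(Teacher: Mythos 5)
Your proof is correct: expanding the multiplicativity of $\tilde\rho$ modulo $\epsilon^2$ entry by entry gives precisely the two cocycle relations $\Psi'(gh)=\Psi'(g)+\psim(g)\Psi'(h)$ and $\Psi(gh)=\Psi(g)+\psi_{_{\heartsuit}}^{-1}(g)\Psi(h)$, and your conjugation computation — where, for $h\in G_H$, $\rho(h)$ is scalar so that the diagonal terms $d_1,d_2$ of $\tilde\rho(\tau)$ drop out at first order and only $\rho(\tau)=\left(\begin{smallmatrix}0&\eta'(\tau)\\ \eta(\tau)&0\end{smallmatrix}\right)$ survives — yields exactly the relation (\ref{kappa' and kappa}), with the correct identification of why the restriction to $G_H$ is indispensable. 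The paper itself gives no proof of this lemma (it is quoted from \cite[Lemme 2.1]{darmon}), and your argument is essentially the one in that reference, so this is the same approach.
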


\begin{lemma}\cite[2.2]{darmon}
\label{lemma:cft}
Pour tout premier $\ell$ de $F$ inerte dans $M$ et pour tout $\lambda \in \Sigma_\ell$, on a
$$ \Psi'(\Frob_{\lambda}) = \sum_{g_i \in I'_F} \alpha_i \log_p(u(\psim,\lambda, g_i)).$$
\end{lemma}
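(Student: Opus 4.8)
L'approche que je propose repose sur la th\'eorie globale du corps de classes et sur la formule du produit pour l'application de r\'eciprocit\'e d'Artin, dans l'esprit de \cite[2.2]{darmon}. Le point de d\'epart est la description explicite de $\Psi'$ donn\'ee par le (ii) du Th\'eor\`eme \ref{dim c}: via le dual de la r\'eciprocit\'e d'Artin de la suite exacte (\ref{global-local}) et l'isomorphisme (\ref{log_p}), la restriction $\Psi'|_{G_H}\in\Hom(G_H,\bar{\Q}_p)[\psim]$ correspond \`a la fonctionnelle
$$ u\otimes 1 \longmapsto \sum_{g_i\in I'_F}\alpha_i\sum_{g\in G''}\psim(g)\,\log_p\big(\iota_p\circ g_i\circ g(u)\big) $$
sur $(\cO_H\otimes\Z_p)^\times=\prod_{w\mid p}\cO_{H_w}^\times$ (voir aussi (\ref{log-loc})). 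En particulier $\Psi'|_{G_H}$ est non ramifi\'e en dehors de $p$, donc se factorise par le groupe des classes d'id\`eles de $H$ et s'annule sur $\cO_{H_w}^\times$ pour toute place $w\nmid p$. Je note de plus que, $\ell$ \'etant inerte dans $M$ et la place $v$ de $M$ sous $\lambda$ \'etant totalement d\'ecompos\'ee dans $H$, on a $H_\lambda=M_v$; ainsi $\Frob_\lambda$ appartient \`a $G_H$ et co\"incide avec l'image d'une uniformisante $\pi_\lambda$ par la r\'eciprocit\'e locale en $\lambda$.

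La deuxi\`eme \'etape est d'appliquer la formule du produit \`a la $\lambda$-unit\'e globale $u_\lambda\in\cO_H[1/\lambda]^\times\otimes\Q$. L'id\`ele principal associ\'e \`a $u_\lambda$ \'etant envoy\'e sur $1$ dans $G_H^{\mathrm{ab}}$, l'\'evaluation de l'homomorphisme $\Psi'$ fournit
$$ \sum_{w}\Psi'\big(\mathrm{rec}_w(u_\lambda)\big)=0, $$
o\`u $w$ parcourt les places de $H$. Je scinde cette somme en trois: pour $w\nmid p$ et $w\neq\lambda$, l'\'el\'ement $u_\lambda$ est une unit\'e locale et le terme s'annule par non-ramification de $\Psi'$; pour $w=\lambda$, comme $u_\lambda$ est de $\lambda$-valuation $1$, on a $\mathrm{rec}_\lambda(u_\lambda)=\Frob_\lambda$ modulo l'image des unit\'es locales (que $\Psi'$ tue), d'o\`u $\Psi'(\mathrm{rec}_\lambda(u_\lambda))=\Psi'(\Frob_\lambda)$ \`a la normalisation du Frobenius pr\`es. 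Il ne subsiste donc, en dehors de la place $\lambda$, que les contributions des places $w\mid p$.

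La troisi\`eme \'etape identifie ces derni\`eres. Puisque $\lambda\nmid p$, l'\'el\'ement $u_\lambda$ est une unit\'e en $p$ et se plonge diagonalement dans $(\cO_H\otimes\Z_p)^\times$; la compatibilit\'e de la r\'eciprocit\'e locale en $p$ avec la fonctionnelle logarithmique rappel\'ee ci-dessus donne
$$ \sum_{w\mid p}\Psi'\big(\mathrm{rec}_w(u_\lambda)\big)=\sum_{g_i\in I'_F}\alpha_i\sum_{g\in G''}\psim(g)\,\log_p\big(\iota_p\circ g_i\circ g(u_\lambda)\big). $$
En reconnaissant la somme int\'erieure comme l'image par $\log_p$ de l'\'el\'ement $u(\psim,\lambda,g_i)$ d\'efini en (\ref{unit lambda}), et en reportant cette \'egalit\'e dans la relation de la deuxi\`eme \'etape, on obtient l'identit\'e annonc\'ee.

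Le principal obstacle sera le contr\^ole rigoureux des signes et de la structure de $\psim$-espace propre. Il faut v\'erifier que les signes issus du dual de la r\'eciprocit\'e dans (\ref{global-local}), de la normalisation $\mathrm{rec}_\lambda(\pi_\lambda)=\Frob_\lambda^{\pm 1}$ et de la formule du produit se compensent pour produire le signe $+$ de l'\'enonc\'e; il faut aussi s'assurer que l'\'equivariance de $\Psi'$ sous $G''=\Gal(H/M)$ (via l'isomorphisme (\ref{iso-inf})) reproduit exactement la pond\'eration par $\psim(g)$ pr\'esente dans $u(\psim,\lambda,g_i)$. La justification pr\'ecise de la non-ramification de $\Psi'$ hors de $p$ et de l'annulation des termes locaux correspondants, quoique standard, devra \^etre explicit\'ee.
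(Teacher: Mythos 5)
Your proof is correct and takes essentially the same route as the source: the paper itself gives no argument for this lemma, quoting it directly from \cite[2.2]{darmon}, and the proof there is precisely this idelic computation --- view $\Psi'|_{G_H}$ as an idele class character of $H$ killing the local units away from $p$, evaluate it on the principal idele attached to $u_\lambda$, and invoke the product formula, the contribution at $\lambda$ giving $\Psi'(\Frob_\lambda)$ and the contribution at $p$ giving $\sum_{g_i \in I'_F} \alpha_i \log_p(u(\psim,\lambda,g_i))$. The single loose end you flag, the sign coming from the normalization of the reciprocity map (arithmetic versus geometric Frobenius), is indeed purely a matter of convention and is fixed by the normalization implicit in the sequence (\ref{global-local}).
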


La déformation $\tilde\rho$ est la représentation galoisienne associée à une forme modulaire propre surconvergente de poids $1$ à coefficients dans $\bar{\Q}_p[\epsilon]$, notée $\tilde{f} =f +  f^\dag \varepsilon$. La forme modulaire $f^\dag$ est une forme propre généralisée et normalisée comme dans l'introduction et ses coefficients de Fourier peuvent être interprétés de la manière suivante:
 
  \begin{equation}
 \label{gal surconvergent}
\Tr (\tilde\rho(\Frob_\ell)) = a_\ell(f) +   a_\ell(f^\dag) \varepsilon,
 \end{equation}
  où $\ell\nmid \gn p$. 

Soit $\ell\nmid \gn p$ un premier de $F$ qui se décompose dans $M$, alors $\Frob_\ell \in G_M$ et donc
  $\Tr(\tilde\rho)(\Frob_\ell) = a_\ell(f)$. Ainsi, on a démontré la première partie du Théorème \ref{q exp}.

Maintenant, soit $\ell\nmid \gn p$ un premier de $F$ qui est inerte dans $M$. On fixe un plongement $\iota_\ell: \bar\Q \hookrightarrow \bar\Q_\ell$ qui induit un plongement du groupe de décomposition $G_{F_{\ell}}$ dans $G_F$. Si $\Frob_\ell$ est un Frobenius de $G_F$ en $\ell$ alors $\Frob_\ell^2$ est le Frobenius de $G_M$ associé au premier de $M$ au dessus de $\ell$. Soit $\lambda \in \Sigma_\ell$ la place première canonique de $H$ au dessus de $\ell$ induite par $\iota_\ell$ ($\sigma_{\lambda}=\Frob_\ell$).

D'après les relations (\ref{tilde}) et (\ref{gal surconvergent}),
 \begin{equation}
 \label{Tr-ell}
  \Tr(\tilde\rho(\Frob_\ell)) = (d_1(\Frob_\ell) + d_2(\Frob_\ell)) \epsilon =  a_\ell(f^\dag) \epsilon,
  \end{equation}
 D'autre part, en utilisant (\ref{tilde}), un calcul direct de $\tilde\rho(\Frob_\ell^2)$ implique
\begin{equation} \left(\begin{matrix} * & \psi^{\sigma}(\Frob_\ell^2) \Psi'(\Frob_\ell^2) \epsilon \\
 * &  * \end{matrix} \right)= 
 \left(\begin{matrix} *  & \eta'(\Frob_\ell) (d_1(\Frob_\ell) + d_2(\Frob_\ell)) \epsilon \\
  * & * \end{matrix} \right)
   \end{equation}
 
Donc, $ \psi^{\sigma}(\Frob_\ell^2) \Psi'(\Frob_\ell^2) = \eta'(\Frob_\ell) (d_1(\Frob_\ell) + d_2(\Frob_\ell))$.
 
Finalement, il découle de la relation (\ref{gal surconvergent}) et de la proposition \cite[2.2]{darmon} que 
 $$ a_\ell(f^\dag) = \eta(\Frob_\ell) \Psi'(\Frob_\ell^2) = \eta(\lambda) \Psi'(\Frob_\lambda). $$

\bibliographystyle{siam}

\addcontentsline{toc}{chapter}{Références.}

\end{document}